\newtheorem{Th}{Theorem}
\newtheorem{Prop}[Th]{Proposition}
\newtheorem{Lm}[Th]{Lemma}
\newtheorem{Co}[Th]{Corollary}
\theoremstyle{definition}
\newtheorem{Rem}{Remark}
\date{}
\author{N. I. Nessonov\footnote{B.Verkin Institute for Low Temperature Physics and Engineering
of the National Academy of Sciences of Ukraine, n.nessonov@gmail.com}, \fbox{S. D. Sinel'shchikov}}
\begin{document}
\maketitle

\begin{abstract}
An analogue of the Schur-Weyl duality for the group of automorphisms of the approximately finite dimensional (AFD) ${\rm II}_1$-factor is produced.

\medskip
		
		\emph{Keywords:} AFD ${\rm II}_1$-factor, automorphisms group of factor, Schur-Weyl duality.
\end{abstract}

\section{Introduction}

Let $M$ be a ${\rm II}_1$-factor with the separable predual $M_*$ and ${\rm tr}$ a unique normal trace
on $M$ such that ${\rm tr}(I)=1$. The inner product $\left<a,b \right>={\rm tr}(b^*a)$ makes $M$ a pre-Hilbert space. Denote by $L^2\left( M,{\rm tr} \right)$ its completion. Let ${\rm
Aut}\,M$ be the automorphism group of $M$ and $U(M)$ the unitary subgroup of $M$. Every $u\in U(M)$
determines the {\it inner} automorphism ${\rm Ad}\,u$ of $M$, ${\rm Ad}\,u(x)=uxu^*$. Denote by ${\rm Inn}\,M$
the subgroup of ${\rm Aut}\,M$ formed by inner automorphisms.

One has a natural unitary representation $\mathfrak{N}$ of
${\rm Aut}\,M$ on the dense subspace $M$ of $L^2\left( M,{\rm tr} \right)$ given by
\begin{equation*}
\mathfrak{N}(\theta)x=\theta(x),\qquad\theta\in{\rm Aut}\,M,\quad x\in M,
\end{equation*}
which is certainly extendable to a representation on $L^2\left(M,{\rm tr}\right)$. Denote by $\mathfrak{N}_I$ the restriction of $\mathfrak{N}$ to the subgroup ${\rm Inn}\,M$.

${\rm Aut}\,M$, being embedded as above into the algebra of bounded operators in $L^2(M,{\rm tr})$, becomes a topological group under the strong operator topology. The subspace $L_0=\left\{v\in L^2( M,{\rm tr}):{\rm tr}(v)=0  \right\}$ is $\mathfrak{N}$-invariant:
$\mathfrak{N}(\theta)L_0=L_0$ for all $\theta\in{\rm Aut}\,M$.

\begin{Th}\label{natural_action}
The restriction $\mathfrak{N}_I^0$ of the representation $\mathfrak{N}_I$ to the invariant subspace $L_0$ is irreducible.
\end{Th}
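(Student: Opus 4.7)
Let $T\in B(L_0)$ be a bounded operator commuting with $\mathfrak{N}_I^0({\rm Ad}\,u)$ for every $u\in U(M)$; the aim is to show that $T$ is a scalar multiple of the identity, which by Schur's lemma yields the irreducibility of $\mathfrak{N}_I^0$. I would first extend $T$ to $\widetilde T\in B(L^2(M,{\rm tr}))$ by declaring $\widetilde T=0$ on $\mathbb{C}\cdot I$; since every $L_u R_{u^*}$ preserves both $\mathbb{C}\cdot I$ and $L_0$, the extension $\widetilde T$ commutes with $L_u R_{u^*}$ for every $u\in U(M)$ and is block-diagonal with respect to $L^2(M,{\rm tr})=\mathbb{C}\cdot I\oplus L_0$.

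The heart of the proof is a finite-dimensional reduction that uses the AFD structure of $M$. Write $M=\overline{\bigcup_{n\ge 1} R_n}^{\,\rm w.o.}$ with an ascending sequence of matrix subfactors $R_n\cong M_{2^n}(\mathbb{C})$, and denote by $E_n\colon L^2(M,{\rm tr})\to L^2(R_n,{\rm tr})$ the orthogonal projection, which coincides with the trace-preserving conditional expectation onto $R_n$ and is therefore $R_n$-bimodular. Set $T_n:=(E_n\widetilde T E_n)|_{L^2(R_n,{\rm tr})}$. The bimodularity of $E_n$ together with the commutation of $\widetilde T$ with every $L_u R_{u^*}$ implies that $T_n$ commutes with the conjugation action of $U(R_n)$ on $L^2(R_n,{\rm tr})$.

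Now classical (finite-dimensional) Schur-Weyl applies: the conjugation of $U(R_n)=U(2^n)$ on $L^2(R_n,{\rm tr})\cong\mathbb{C}^{2^n}\otimes(\mathbb{C}^{2^n})^*$ decomposes into the trivial representation $\mathbb{C}\cdot I$ and the irreducible adjoint representation on traceless matrices, so its commutant is two-dimensional. Hence $T_n=\alpha_n P_n+\beta_n\,{\rm Id}_n$, where $P_n$ is the projection onto $\mathbb{C}\cdot I\subset L^2(R_n,{\rm tr})$. A consistency check on the common subspace $L^2(R_n)\subset L^2(R_m)$ for $n\le m$ (using that $T_m x\in L^2(R_n)$ and $E_n T_m x=T_n x$ whenever $x\in L^2(R_n)$) forces the scalars $\alpha_n$ and $\beta_n$ to be independent of $n$. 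Since $E_n\to{\rm Id}$ strongly on $L^2(M,{\rm tr})$ and $\bigcup_n L^2(R_n,{\rm tr})$ is $L^2$-dense, this yields $\widetilde T=\alpha P+\beta\,{\rm Id}$ on all of $L^2(M,{\rm tr})$, whence $T=\beta\,{\rm Id}_{L_0}$.

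The main technical obstacle is the compatibility argument for the scalars across the filtration: one has to track carefully how $E_n$ interacts with $\widetilde T$ and match the resulting coefficients on both the one-dimensional trivial block and on $L_0\cap L^2(R_n)$ for each $n$. Beyond that bookkeeping, the proof simply translates the infinite-dimensional irreducibility into the finite-dimensional Schur-Weyl statement and a routine strong-operator-topology limit.
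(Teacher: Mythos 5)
There is a genuine gap, and it lies in the very first structural step rather than in the bookkeeping you flag as the main obstacle. Theorem \ref{natural_action} is stated for an \emph{arbitrary} ${\rm II}_1$-factor $M$ with separable predual, not only for the hyperfinite one; the introduction says explicitly that the finite-dimensional approximation argument ``is going to be applicable'' in the AFD case but that ``this theorem in its utmost generality requires a new approach.'' Your proof begins by writing $M$ as the weak closure of an increasing union of matrix subfactors $R_n\cong M_{2^n}(\mathbb{C})$ -- but that is precisely the definition of hyperfiniteness, and no such filtration exists for, say, a free group factor $L(F_2)$. So while your reduction to classical Schur--Weyl for $U(2^n)$, the two-dimensionality of the commutant of the conjugation action, the consistency of the scalars $\beta_n$ across the filtration, and the strong-operator limit $E_n\to{\rm Id}$ are all correct \emph{for the AFD factor}, the argument does not prove the theorem as stated.

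The paper's proof is designed to avoid this. It invokes Popa's theorem (Proposition \ref{afm}, quoted from Sinclair--Smith) that any separable ${\rm II}_1$-factor contains a hyperfinite subfactor $F$ with $F'\cap M=\mathbb{C}I$ and a Cartan masa $\mathfrak{A}\subset F$ of $M$. It then shows that the trace-preserving conditional expectation $E$ onto $\mathfrak{A}$ lies in the von Neumann algebra generated by $\mathfrak{N}({\rm Inn}\,M)$ (as a strong limit of averages over finite groups of inner automorphisms coming from the ${\rm I}_2$-subfactors generating $F$), so any $B'$ in the commutant preserves $L_0^{\mathfrak{A}}=E(L_0)$. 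The problem then splits into: \textbf{(a)} the normalizer $\mathcal{N}(\mathfrak{A})$ acts irreducibly on $L_0^{\mathfrak{A}}$, proved by restricting to copies of $\mathfrak{S}_{2^n}$ permuting the minimal projections of $\mathfrak{A}_n$ and computing the commutant of the natural permutation representation; and \textbf{(b)} $L_0^{\mathfrak{A}}$ is cyclic for $\mathfrak{N}({\rm Inn}\,M)$ in $L_0$, proved by approximating any self-adjoint $B\in M$ in norm by $UAU^*$ with $A\in\mathfrak{A}$, $U\in U(M)$, via the spectral theorem and matching of traces of spectral projections. If you want to salvage your write-up, either restrict the claim to the AFD case or replace the matrix filtration by this Cartan-masa reduction.
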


With an arbitrary ${\rm II}_1$-factor $M$ being replaced in the above settings by the algebra of complex $n\times n$ matrices, Theorem \ref{natural_action} reduces to the well known fact of classical representation theory (see \cite{Kir}, Ch. 3, \S 17.2, Theorem 2). Thus, in case of the approximately finite dimensional (AFD or hyperfinite)
factor $M$, an argument based on approximation of ${\rm II}_1$-factor $M$
by finite dimensional factors is going to be applicable in proving Theorem \ref{natural_action}. However, this theorem in its utmost generality requires a new approach.

Define a diagonal action $\mathfrak{N}^{\otimes k}$ of ${\rm Aut}\,M$ on $L^2(M,\operatorname{tr})^{\otimes k}=L^2\left(M^{\otimes k},\operatorname{tr}^{\otimes k}\right)$ by
\begin{eqnarray*}
\mathfrak{N}^{\otimes k}(\theta)\left(v_1\otimes v_2\otimes\cdots\otimes v_k  \right)=\left(
\mathfrak{N}(\theta)v_1 \right)\otimes\left( \mathfrak{N}(\theta)v_2 \right)\otimes\cdots\otimes\left(
\mathfrak{N}(\theta)v_k \right).
\end{eqnarray*}
Additionally, the symmetric group $\mathfrak{S}_k$ acts on $L^2\left(M^{\otimes k},\operatorname{tr}^{\otimes k}\right)$ by permutations
\begin{eqnarray}\label{action_of+symmetric_group}
\;^k\!\mathcal{P}(s)\left(v_1\otimes v_2\otimes\cdots\otimes v_k  \right)=v_{s^{-1}(1)}\otimes
v_{s^{-1}(2)}\otimes\cdots\otimes v_{s^{-1}(k)}.
\end{eqnarray}
Since the operators $\mathfrak{N}^{\otimes k}(\theta)$ and $\;^k\!\mathcal{P}(s)$ commute, we obtain a
representation $\mathcal{F}$ of the group ${\rm Aut}\,M\times\mathfrak{S}_k$, $\mathcal{F}\left( \theta,s
\right)=\mathfrak{N}^{\otimes k}(\theta)\cdot \;^k\!\mathcal{P}(s)$.

Denote by $\mathfrak{N}^{\otimes k}_0$ and $\;^k\!\mathcal{P}_0$ the restrictions of the representations $\mathfrak{N}^{\otimes k}$ and $\;^k\!\mathcal{P}$ to the subspace  $L_0^{\otimes k}\subset L^2(M,\operatorname{tr})^{\otimes k}$.

Recall that the irreducible representations of $\mathfrak{S}_k$ are parameterized by the unordered
partitions of $k$.  Denote the set of all such partitions by $\Upsilon_k$. Let
$\lambda\in\Upsilon_k$ and let $\chi_\lambda$ be the character of the corresponding irreducible
representation ${\rm R}_\lambda$. Denote by ${\rm dim} \lambda$ the dimension of ${\rm R}_\lambda$. The operator
\begin{eqnarray}
P^\lambda=\frac{{\rm dim} \lambda }{k!}\sum\limits_{s\in\mathfrak{S}_k}\chi_\lambda(s)\;^k\!\mathcal{P}(s)
\end{eqnarray}
is an orthogonal projection in the centre of the $w^*$-algebra generated by the operators $\left\{
\mathcal{F}\left( \theta,s \right)  \right\}_{(\theta,s)\in{\rm Aut}\,M\times\mathfrak{S}_k}$.
Denote by $\mathcal{F}^\lambda_0$ the representation $\mathcal{F}$ restricted to the subspace
$H^\lambda_0=P^\lambda\left( L_0^{\otimes k} \right)$.

\begin{Th}\label{main_Schur-Weyl}
Let $M$ be an AFD ${\rm II}_1$-factor. Then the commutant of the set $\mathfrak{N}^{\otimes k}_0({\rm Aut}\,M)$ is generated by $\;^k\!\mathcal{P}_0(\mathfrak{S}_k)$.
\end{Th}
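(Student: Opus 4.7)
The inclusion $\;^k\!\mathcal{P}_0(\mathfrak{S}_k)'' \subseteq \mathfrak{N}^{\otimes k}_0({\rm Aut}\,M)'$ is automatic, since the permutations of tensor factors commute with the diagonal action by construction. The substance lies in the reverse inclusion. By a standard double-commutant argument together with the isotypic decomposition furnished by the central projections $P^\lambda$, this reduces to: (i) for each $\lambda \in \Upsilon_k$ with $H^\lambda_0 \neq 0$, the ${\rm Aut}\,M$-action on the multiplicity space of the $\lambda$-type inside $H^\lambda_0$ is irreducible; (ii) for distinct $\lambda, \mu \in \Upsilon_k$, the resulting ${\rm Aut}\,M$-representations on $H^\lambda_0$ and $H^\mu_0$ are disjoint.

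The plan is to establish (i) and (ii) by exploiting the AFD hypothesis through approximation by finite-dimensional matrix subfactors. Choose an increasing tower $M_1 \subset M_2 \subset \cdots$ with $\bigcup_n M_n$ weakly dense in $M$, and use the splitting $M \cong M_n \otimes (M_n' \cap M)$. Under the induced identification $L^2(M,\operatorname{tr}) \cong M_n \otimes L^2(M_n' \cap M)$, the inner-automorphism subgroup ${\rm Ad}\,U(M_n) \subset {\rm Inn}\,M$ acts as the adjoint representation of the unitary group of $M_n = V_n \otimes V_n^*$ (with $V_n$ the defining module) tensored with the identity on $L^2(M_n' \cap M)$. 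Classical Schur--Weyl duality applied to this $U(V_n)$-action on $(V_n \otimes V_n^*)^{\otimes k}$ describes its commutant as a walled-Brauer-type algebra, whose generators beyond the image of $\mathfrak{S}_k$ are partial contractions involving the identity $I \in M_n$. The trace-zero restriction to $L_0^{\otimes k}$ kills the ``global'' contraction, and intersecting the commutants over the tower $\{M_n\}$ should eliminate the remaining partial-contraction generators: a partial contraction at level $n$ produces, on the ancillary tensor factor $L^2(M_n' \cap M)^{\otimes k}$, a component that cannot be matched consistently as one passes to a larger level $n'$. This is where the AFD hypothesis enters essentially---at any single finite level the commutant strictly exceeds $\mathfrak{S}_k$, and only the joint intersection over the entire tower collapses it.

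The irreducibility in (i) and the disjointness in (ii) should then follow from two inputs: the $k=1$ case furnished by Theorem~\ref{natural_action} together with, where needed, the rich structure of outer automorphisms of the AFD factor---notably the coordinate-permutation and shift-type automorphisms arising from the model $M \cong \bigotimes_{i \in \mathbb{N}} M_2(\mathbb{C})$---used to distinguish distinct $\mathfrak{S}_k$-isotypic components. The principal obstacle is the rigorous control of the limit: one must show that every bounded operator on $L_0^{\otimes k}$ commuting with every ${\rm Ad}\,U(M_n)$, and with every outer automorphism, already lies in the image of $\mathfrak{S}_k$. This fails at every finite level and therefore genuinely requires the tower-intersection argument together with the boundedness constraint intrinsic to $L^2$-theory, which suppresses many ``algebraic'' intertwiners (such as those arising from multiplication in $M$) that are present in the finite-dimensional analogue but cease to be bounded operators in the $\mathrm{II}_1$ setting.
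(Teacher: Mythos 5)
Your reduction to (i) irreducibility of the multiplicity spaces and (ii) disjointness of the isotypic components is a legitimate reformulation, and your identification of the finite-level commutant of $\operatorname{Ad}U(M_n)$ on $(V_n\otimes V_n^*)^{\otimes k}$ as a walled-Brauer-type algebra is correct. But the proposal stops exactly where the proof has to begin: the claim that intersecting the commutants over the tower ``should eliminate the remaining partial-contraction generators'' \emph{is} the content of the theorem, and you give no mechanism for it. It is not enough to observe that an individual contraction generator at level $n$ fails to lie in the level-$n'$ commutant; one must rule out that some weak limit or bounded combination of such operators, tensored with arbitrary bounded operators on the ancillary factor $L^2(M_n'\cap M)^{\otimes k}$, survives in the intersection over all $n$. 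Nothing in the proposal controls this, and the appeal to ``boundedness intrinsic to $L^2$-theory'' is a heuristic, not an argument. Likewise the passage from the $k=1$ case (Theorem \ref{natural_action}) plus shift-type outer automorphisms to (i) and (ii) is only asserted. As it stands this is a plan whose hard steps are all deferred.

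For comparison, the paper sidesteps the tower-intersection problem entirely by a compression trick. It fixes a Cartan masa $\mathfrak{A}\subset M$, proves that the conditional expectation $\,^k\!E$ onto $\mathfrak{A}^{\otimes k}$ lies in $\left(\mathfrak{N}^{\otimes k}(\operatorname{Ad}U(M))\right)''$ (Proposition \ref{limits_of_E_J}), and observes that the compressed operators $\,^k\!E\cdot\mathfrak{N}^{\otimes k}(\operatorname{Ad}u)\cdot\,^k\!E$ act on $\left(L_0^{\mathfrak{A}}\right)^{\otimes k}$ through the doubly stochastic matrices $\mu(u)=\left[\,|u_{\mathbf{i}_J\mathbf{i}_J'}|^2\right]$, that is, through the \emph{natural} $k$-fold tensor power of a representation of a subgroup of $GL(n,\mathbb{R})$ on a vector space, rather than through the adjoint representation on a matrix algebra. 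A submersion argument (Karabegov's theorem, via Proposition \ref{openess_mu}) shows that the image of $\mu$ contains an open subset of $\,^I\!GL(n,\mathbb{R})\cong GL(n-1,\mathbb{R})$, so the compressed von Neumann algebra contains $\mathfrak{L}_0^{\otimes k}\left(\,^I\!GL(n,\mathbb{R})\right)$, and classical $GL$--$\mathfrak{S}_k$ duality then yields exactly $\mathfrak{S}_k$ as commutant, with no Brauer-type contractions to dispose of. A separate cyclicity argument for $\left(L_0^{\mathfrak{A}}\right)^{\otimes k}$ under $\mathfrak{N}^{\otimes k}(\operatorname{Inn}M)$ transfers the conclusion back to $L_0^{\otimes k}$. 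If you wish to salvage your route, the lemma you must actually prove is that the intersection over $n$ of the walled-Brauer commutants, restricted to $L_0^{\otimes k}$, equals $\;^k\!\mathcal{P}_0(\mathfrak{S}_k)''$; that statement is not visibly easier than the theorem itself.
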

\begin{Co}
  The representation $\mathcal{F}_0^\lambda$ of ${\rm Aut}\,M\times\mathfrak{S}_k$ is
irreducible. With different $\lambda,\zeta\in\Upsilon_k$, the restrictions of $\mathcal{F}_0^\lambda$ and $\mathcal{F}_0^\zeta$ to the subgroup ${\rm Aut}\, M$ are not quasi-equivalent.
\end{Co}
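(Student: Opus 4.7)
The plan is to deduce the Corollary from Theorem~\ref{main_Schur-Weyl} via the standard Schur--Weyl bicommutant bookkeeping. Set $A=\mathfrak{N}^{\otimes k}_0({\rm Aut}\,M)''$ and $B=\;^k\!\mathcal{P}_0(\mathfrak{S}_k)''$. Theorem~\ref{main_Schur-Weyl} gives $A'=B$, whence $A=B'$ by the double commutant theorem. One preliminary: each $P^\lambda$ is non-zero on $L_0^{\otimes k}$; this follows from ${\rm dim}\,L_0=\infty$, since any $k$ linearly independent vectors of $L_0$ generate, under $\;^k\!\mathcal{P}$, a copy of the regular representation of $\mathfrak{S}_k$ inside $L_0^{\otimes k}$, and every irreducible of $\mathfrak{S}_k$ appears there.

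Standard representation theory of $\mathfrak{S}_k$ identifies $B|_{H_0^\lambda}$ with $1\otimes\mathcal{B}(V_\lambda)\cong M_{{\rm dim}\,\lambda}(\mathbb{C})$ in a factorization $H_0^\lambda\cong K_\lambda\otimes V_\lambda$, where $V_\lambda$ carries ${\rm R}_\lambda$ and $K_\lambda$ is the multiplicity space. Since $P^\lambda$ is central in the $w^*$-algebra generated by $\mathcal{F}$, cut-down by $P^\lambda$ commutes with taking commutants, and $A=B'$ then yields $A|_{H_0^\lambda}=(B|_{H_0^\lambda})'=\mathcal{B}(K_\lambda)\otimes 1$. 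Combining, the $w^*$-algebra generated by $\mathcal{F}_0^\lambda({\rm Aut}\,M\times\mathfrak{S}_k)$ equals $\mathcal{B}(K_\lambda)\otimes\mathcal{B}(V_\lambda)=\mathcal{B}(H_0^\lambda)$, which proves the irreducibility of $\mathcal{F}_0^\lambda$.

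The same computation shows that $A\cap A'=A\cap B$ meets each $H_0^\lambda$ in $(\mathcal{B}(K_\lambda)\otimes 1)\cap(1\otimes\mathcal{B}(V_\lambda))=\mathbb{C}$, so the centre of $A$ is $\bigoplus_\lambda\mathbb{C}\,P^\lambda$ and every $P^\lambda$ is a minimal central projection of $A$. In particular, $\mathfrak{N}^{\otimes k}_0|_{H_0^\lambda}$ is a type~${\rm I}$ factor representation of ${\rm Aut}\,M$, consisting of ${\rm dim}\,\lambda$ copies of an irreducible $\pi_\lambda$ on $K_\lambda$. For distinct $\lambda,\zeta\in\Upsilon_k$, any nonzero intertwiner $K_\lambda\to K_\zeta$ between $\pi_\lambda$ and $\pi_\zeta$, tensored with an arbitrary nonzero linear map $V_\lambda\to V_\zeta$ and extended by zero to the rest of $L_0^{\otimes k}$, would be a nonzero operator commuting with $\mathfrak{N}^{\otimes k}_0$; by Theorem~\ref{main_Schur-Weyl} it would belong to $B$, yet the corner $P^\zeta B P^\lambda$ vanishes by the orthogonality of the central projections $P^\lambda,P^\zeta$. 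Hence $\pi_\lambda\not\cong\pi_\zeta$, and $\mathcal{F}_0^\lambda|_{{\rm Aut}\,M}$, $\mathcal{F}_0^\zeta|_{{\rm Aut}\,M}$ are disjoint factor representations, in particular not quasi-equivalent.

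The main substance has already been supplied by Theorem~\ref{main_Schur-Weyl}; beyond invoking it, the only items requiring care are the non-vanishing of each $P^\lambda$ and the fact that cut-down by a central projection commutes with taking commutants. Neither step presents a real obstacle.
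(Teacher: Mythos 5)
Your proposal is correct, and it is precisely the standard bicommutant/isotypic-decomposition argument that the paper implicitly relies on: the Corollary is stated without a separate proof, being treated as an immediate consequence of Theorem~\ref{main_Schur-Weyl}. Your write-up supplies exactly the missing routine details (non-vanishing of $P^\lambda$ via the regular representation inside $L_0^{\otimes k}$, the cut-down $A|_{H_0^\lambda}=\mathcal{B}(K_\lambda)\otimes 1$, and disjointness of the type ${\rm I}$ factor representations), so it matches the intended route.
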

Representation $\;^k\!\mathcal{P}$ can be extended to a representation $\;^k\!\mathcal{P}^{\mathscr{I}_k}$ of {\it the symmetric inverse semigroup} $\mathscr{I}_k$, which can realize  as a semigroup of $\{0,1\}$-matrices $a=\left[  a_{ij} \right]_{i,j=1}^k$ with the ordinary matrix multiplication  in such a way that $a$ has at most one nonzero entry in each row and each column. We denote by $\epsilon_i$ a diagonal matrix $[a_{pq}]$ such that $a_{ii}=0$ and $a_{pq}=\delta_{pq}$, if $p\neq i$ or $q\neq i$. Of course, $\mathfrak{S}_k\subset \mathscr{I}_k$. Define operator $\;^k\!\mathcal{P}^{\mathscr{I}_k}(\epsilon_i)$ on $L^2\left(M^{\otimes k},\operatorname{tr}^{\otimes k}\right)$ as follows
\begin{eqnarray*}
\;^k\!\mathcal{P}^{\mathscr{I}_k}(\epsilon_i)\left(\cdots v_{i-1}\otimes v_i\otimes v_{i+1}\cdots\right)={\rm tr}(v_i)(\epsilon_i)\left(\cdots v_{i-1}\otimes {\rm I}\otimes v_{i+1}\cdots\right).
\end{eqnarray*}
We set $\;^k\!\mathcal{P}^{\mathscr{I}_k}(s)=\;^k\!\mathcal{P}(s)$, if $s\in \mathfrak{S}_k$. Then $\;^k\!\mathcal{P}^{\mathscr{I}_k}$ is extended to a representation of the semigroup $\mathscr{I}_k$. Using Theorem \ref{main_Schur-Weyl}, we prove in section \ref{finite_inverse_semigroup} next statement.
\begin{Th}
If $M$ is an AFD ${\rm II}_1$-factor then the commutant of $\mathfrak{N}^{\otimes k}({\rm Aut}\,M)$ is generated by $\;^k\!\mathcal{P}^{\mathscr{I}_k}\left( \mathscr{I}_k\right)$.
\end{Th}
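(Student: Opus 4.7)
The plan is to reduce the statement to Theorem \ref{main_Schur-Weyl} by exploiting the orthogonal decomposition $L^2(M,{\rm tr})=\mathbb{C}I\oplus L_0$. Tensoring yields
\[
L^2(M,{\rm tr})^{\otimes k}=\bigoplus_{S\subseteq\{1,\ldots,k\}} H_S,\qquad H_S=\bigotimes_{i=1}^k V_i^{(S)},
\]
with $V_i^{(S)}=L_0$ for $i\in S$ and $V_i^{(S)}=\mathbb{C}I$ otherwise. The operator $\;^k\!\mathcal{P}^{\mathscr{I}_k}(\epsilon_i)$ is by construction the orthogonal projection of the $i$-th tensor slot onto $\mathbb{C}I$, so each projection $P_S$ onto $H_S$ lies in the von Neumann algebra $\mathcal{B}:=\;^k\!\mathcal{P}^{\mathscr{I}_k}(\mathscr{I}_k)''$. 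The inclusion $\mathcal{B}\subseteq\mathfrak{N}^{\otimes k}({\rm Aut}\,M)'$ is immediate from the ${\rm Aut}\,M$-invariance of ${\rm tr}$. Writing any $a\in\mathscr{I}_k$ in the form $a=\sigma\cdot\prod_{i\in J}\epsilon_i$ with $\sigma\in\mathfrak{S}_k$ and $J\subseteq\{1,\ldots,k\}$, one checks that $\;^k\!\mathcal{P}^{\mathscr{I}_k}(a)$ sends $H_S$ into $H_{\sigma(S)}$ when $J\cap S=\emptyset$ and is zero otherwise; in particular, $\mathcal{B}$ is block-graded by the cardinality $|S|$.

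Conversely, take $T\in\mathfrak{N}^{\otimes k}({\rm Aut}\,M)'$ and decompose $T=\sum_{S,S'}T_{S,S'}$ with $T_{S,S'}=P_S T P_{S'}$; each block remains in the commutant. For the diagonal blocks $|S|=|S'|=m$, choose any partial bijection $a\in\mathscr{I}_k$ with domain $S'$ and range $S$: the operator $\;^k\!\mathcal{P}^{\mathscr{I}_k}(a):H_{S'}\to H_S$ is an ${\rm Aut}\,M$-equivariant unitary, so conjugating $T_{S,S'}$ by it produces an endomorphism of $H_S\cong L_0^{\otimes m}$ in the commutant of $\mathfrak{N}_0^{\otimes m}({\rm Aut}\,M)$. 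Theorem \ref{main_Schur-Weyl} identifies this commutant with $\;^m\!\mathcal{P}_0(\mathfrak{S}_m)''$, and reversing the conjugation writes $T_{S,S'}$ as a linear combination of $\;^k\!\mathcal{P}^{\mathscr{I}_k}(b)$ over partial bijections $b\in\mathscr{I}_k$ from $S'$ to $S$, hence $T_{S,S'}\in\mathcal{B}$.

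The main obstacle is to show $T_{S,S'}=0$ whenever $|S|\neq|S'|$, equivalent to the disjointness of $\mathfrak{N}_0^{\otimes m}$ and $\mathfrak{N}_0^{\otimes n}$ as ${\rm Aut}\,M$-representations for $m\neq n$. This strengthens the Corollary of Theorem \ref{main_Schur-Weyl} across tensor degrees and genuinely uses the AFD hypothesis, since in finite dimensions the contraction $v_1\otimes v_2\mapsto{\rm tr}(v_1 v_2)$ provides a non-trivial intertwiner $L_0^{\otimes 2}\to\mathbb{C}$. My approach is to exploit mixing: taking an AFD presentation $M\cong\bigotimes_{n\in\mathbb{Z}}R_0$ with $R_0$ a matrix subfactor, the $N$-step Bernoulli shift $\sigma_N\in{\rm Aut}\,M$ satisfies $\mathfrak{N}_0(\sigma_N)\to 0$ weakly on $L_0$, so $\mathfrak{N}^{\otimes k}(\sigma_N)|_{H_S}$ converges weakly to ${\rm id}$ when $S=\emptyset$ and to $0$ otherwise; the intertwining identity immediately forces $T_{S,S'}=0$ when exactly one of $|S|,|S'|$ equals zero. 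To eliminate the remaining cases $1\leq|S|<|S'|$, I would refine the argument by constructing tailored automorphisms whose spectral multiplicities on $L_0^{\otimes m}$ and $L_0^{\otimes n}$ are distinguishable, exploiting the flexibility of the AFD factor. Combining the diagonal and off-diagonal analyses yields $\mathfrak{N}^{\otimes k}({\rm Aut}\,M)'=\mathcal{B}$.
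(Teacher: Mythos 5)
Your decomposition into the subspaces $H_S$ and your treatment of the diagonal blocks ($|S|=|S'|$, conjugate by an equivariant partial permutation onto $H_{X_m}\cong L_0^{\otimes m}$ and invoke Theorem \ref{main_Schur-Weyl}) coincide with what the paper does (cf.\ Lemma \ref{Lemma about restriction} and the final computation with $\,^k\!P_\mathcal{A}\circ B'_m\circ\,^k\!P_\mathcal{B}$). The genuine gap is exactly where you say it is: the vanishing of $T_{S,S'}$ for $1\le|S|\ne|S'|$, i.e.\ the disjointness of $\mathfrak{N}_0^{\otimes m}$ and $\mathfrak{N}_0^{\otimes n}$ as ${\rm Aut}\,M$-representations for $m\ne n$, both nonzero. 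Your Bernoulli-shift mixing argument cannot close it: for every $m\ge 1$ the operators $\mathfrak{N}_0^{\otimes m}(\sigma_N)$ tend weakly to $0$, so the limit distinguishes only the trivial level $S=\emptyset$ from the rest and gives no information separating $L_0^{\otimes m}$ from $L_0^{\otimes n}$ when $m,n\ge 1$. The proposed refinement (``tailored automorphisms whose spectral multiplicities are distinguishable'') is a programme, not a proof; since every element of $\;^k\!\mathcal{P}^{\mathscr{I}_k}(\mathscr{I}_k)''$ is graded by $|S|$, this disjointness is logically equivalent to the part of the theorem you are missing, so it cannot be waved through.

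The paper closes precisely this gap with a quantitative device rather than a mixing one: for the one-parameter family of unitaries $\,^\theta\!\mathbf{U}$ built from \eqref{U_theta_formula}, formulas \eqref{Operator_unistochastic} and \eqref{matrix_U_unistoh} give
\begin{equation*}
\,^k\!E\circ\mathfrak{N}^{\otimes k}\bigl({\rm Ad}\,^\theta\!\mathbf{U}\bigr)\circ\,^k\!E
=\sum_{j=0}^{k}\Bigl(1-\tfrac{|\theta-1|^2}{n}\Bigr)^{j}\,^k\!E\circ\,^k\!P_j ,
\end{equation*}
an element of $\bigl(\mathfrak{N}^{\otimes k}({\rm Aut}\,M)\bigr)''$ (by Proposition \ref{limits_of_E_J}) acting as the scalar $\bigl(1-\tfrac{|\theta-1|^2}{n}\bigr)^{m}$ on the level-$m$ piece of $L^2(\mathfrak{A}^{\otimes k})$. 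Since these scalars are pairwise distinct for $\theta\ne 1$, commuting $B'$ past this operator forces $\,^k\!P_l\circ B'\circ\,^k\!E\circ\,^k\!P_m=0$ for $l\ne m$; the compression by $\,^k\!E$ is then removed using the cyclicity of $\mathfrak{A}^{\otimes k}$ under ${\rm Inn}\,M$ (Corollary \ref{ciclicity_corollary}), yielding $B'=\sum_m\,^k\!P_m\circ B'\circ\,^k\!P_m$. If you want to salvage your outline, you should replace the mixing step by an argument of this kind (or prove the disjointness across tensor degrees by some other concrete means); as written, the off-diagonal vanishing — the only part of the theorem not already contained in Theorem \ref{main_Schur-Weyl} — is not established.
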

Using the embedding
$$L^2\left( M,{\rm tr} \right)^{\otimes n}\ni m_1\otimes\ldots\otimes m_n\mapsto m_1\otimes\ldots\otimes m_n\otimes{\rm I}\in L^2\left( M,{\rm tr} \right)^{\otimes (n+1)},$$
we identify $~L^2\left( M,{\rm tr} \right)^{\otimes n}$ with the subspace in $~L^2\left( M,{\rm tr} \right)^{\otimes (n+1)}$.
  Denote by $L^2\left( M,{\rm tr} \right)^{\otimes \infty}$ the completion of the pre-Hilbert space $\bigcup\limits_{n=1}^\infty L^2\left( M,{\rm tr} \right)^{\otimes n}$. It is convenient to consider $\bigcup\limits_{n=1}^\infty L^2\left( M,{\rm tr} \right)^{\otimes n}$ as the linear span of the vectors $$v_1\otimes \cdots\otimes v_n\otimes{\rm I}\otimes{\rm I}\otimes\cdots,~\text{ where}~ v_j\in M.$$ At the same time, we will to identify  $L^2\left( M,{\rm tr} \right)^{\otimes n}$ with the closure of the linear span of all vectors $v_1\otimes \cdots\otimes v_n\otimes v_{n+1}\otimes\cdots$, where $v_i={\rm I}$ for all $i>n$.
 Define the representation $\mathfrak{N}^{\otimes \infty}$ of group ${\rm Aut}\, M$ as follows
\begin{eqnarray*}
\mathfrak{N}^{\otimes \infty}(\theta)\left(v_1\otimes\cdots\otimes v_n\otimes \cdots  \right)=\left(
\mathfrak{N}(\theta)v_1 \right)\otimes\cdots\otimes\left(
\mathfrak{N}(\theta)v_n \right)\otimes \cdots.
 \end{eqnarray*}
 The infinite symmetric group $\mathfrak{S}_\infty$ acts on  $L^2\left( M,{\rm tr} \right)^{\otimes \infty}$ by permutations
 \begin{eqnarray*}
\;^\infty\!\mathcal{P}(s)\left(v_1\otimes \cdots\otimes v_n\otimes\cdots  \right)=v_{s^{-1}(1)}\otimes
\cdots\otimes v_{s^{-1}(n)}\otimes\cdots, \;\;\; s\in \mathfrak{S}_\infty.
 \end{eqnarray*}
 We prove in section \ref{infinite_Sch_Weyl} the following statement.
 \begin{Th}
If $M$ is an AFD ${\rm II}_1$-factor then the commutant of  $\mathfrak{N}^{\otimes \infty}({\rm Aut}\,M)$ is generated by $\;^k\!\mathcal{P}\left(\mathfrak{S}_\infty\right)$.
\end{Th}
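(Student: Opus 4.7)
The plan is to reduce the infinite case to the preceding $\mathscr{I}_k$-theorem by truncating operators in the commutant via slice maps taken with respect to the stabilizing tail vector ${\rm I}^{\otimes\infty}$, and then to observe that the trace-out generators $\epsilon_i$ are already weak-operator limits of transpositions, so that $\;^\infty\!\mathcal{P}(\mathfrak{S}_\infty)$ suffices with no proper semigroup extension. The forward inclusion $\;^\infty\!\mathcal{P}(\mathfrak{S}_\infty)\subseteq\mathfrak{N}^{\otimes\infty}({\rm Aut}\,M)'$ is immediate from commutation on elementary tensors.

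For the reverse, fix $T$ in the commutant. Let $\Omega_n$ denote the stabilizing tail ${\rm I}\otimes{\rm I}\otimes\cdots$ occupying positions from $n+1$ onwards, and define the slice map $E_n(T)\in\mathcal{B}(L^2(M,\operatorname{tr})^{\otimes n})$ by $\langle E_n(T)u,v\rangle=\langle T(u\otimes\Omega_n),v\otimes\Omega_n\rangle$. Since $\mathfrak{N}^{\otimes\infty}(\theta)$ factorizes as $\mathfrak{N}^{\otimes n}(\theta)$ times a tail action fixing $\Omega_n$, a short calculation yields $E_n(T)\in\mathfrak{N}^{\otimes n}({\rm Aut}\,M)'$, and the preceding theorem places $E_n(T)$ in the w$^*$-algebra generated by $\;^n\!\mathcal{P}^{\mathscr{I}_n}(\mathscr{I}_n)$.

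The crucial step is to identify the extension $\;^n\!\mathcal{P}^{\mathscr{I}_n}(\epsilon_i)\otimes I^{\otimes\infty}$ (acting on $L^2(M,\operatorname{tr})^{\otimes\infty}$) as a weak-operator limit of transpositions. Compare matrix coefficients of $\;^\infty\!\mathcal{P}((i,N))$ and of $\;^n\!\mathcal{P}^{\mathscr{I}_n}(\epsilon_i)\otimes I^{\otimes\infty}$ on stabilized vectors $u=v_1\otimes\cdots\otimes v_m\otimes\Omega_m$ and $u'=v'_1\otimes\cdots\otimes v'_m\otimes\Omega_m$ with $N>\max(n,m)$: both evaluate to $\operatorname{tr}(v_i)\,\overline{\operatorname{tr}(v'_i)}\prod_{j\leq m,\,j\neq i}\langle v_j,v'_j\rangle$, the factor $\operatorname{tr}(v_i)$ arising precisely because position $N$ of $u$ holds ${\rm I}$. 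Uniform boundedness together with density of stabilized vectors then force $\;^\infty\!\mathcal{P}((i,N))\to\;^n\!\mathcal{P}^{\mathscr{I}_n}(\epsilon_i)\otimes I^{\otimes\infty}$ weakly as $N\to\infty$. Combined with $\;^n\!\mathcal{P}^{\mathscr{I}_n}(s)\otimes I^{\otimes\infty}=\;^\infty\!\mathcal{P}(s)$ for $s\in\mathfrak{S}_n$, this places $E_n(T)\otimes I^{\otimes\infty}$ in the weak closure of the algebra generated by $\;^\infty\!\mathcal{P}(\mathfrak{S}_\infty)$.

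To close the argument, observe that $E_n(T)\otimes I^{\otimes\infty}$ and $T$ share all matrix coefficients against pairs of vectors in $L^2(M,\operatorname{tr})^{\otimes m}$ with $m\leq n$; together with $\|E_n(T)\|\leq\|T\|$ and density of stabilized vectors in $L^2(M,\operatorname{tr})^{\otimes\infty}$, this forces $E_n(T)\otimes I^{\otimes\infty}\to T$ weakly, so $T$ lies in the weak closure of the algebra generated by $\;^\infty\!\mathcal{P}(\mathfrak{S}_\infty)$. The main obstacle is the weak-limit identification in the preceding paragraph: one must verify carefully that the matrix-element calculation correctly exploits the stabilizing role of ${\rm I}$, so that the transposition swapping $v_i$ with a position occupied by ${\rm I}$ indeed enacts the ``trace-out'' action of $\epsilon_i$, and that the estimates are uniform enough to pass from dense stabilized vectors to arbitrary Hilbert-space vectors.
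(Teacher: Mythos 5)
Your proposal is correct and follows essentially the same route as the paper: the slice map $E_n$ is exactly the compression by the paper's projection $\mathfrak{P}_n$ onto $L^2(M,\operatorname{tr})^{\otimes n}$, the identification of the trace-out operators $\;^n\!\mathcal{P}^{\mathscr{I}_n}(\epsilon_i)$ as weak limits of the transpositions $(i\,N)$ is the paper's construction of the projections $O_i$, and both arguments then invoke Theorem \ref{Inverse_Schur-Weyl} on the compressed corner before passing to the limit $n\to\infty$.
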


\section{Proof of Theorem \ref{natural_action}}\label{pth1}

Let $M$ be a ${\rm II}_1$-factor. Denote by $B\left(L^2(M,\operatorname{tr})\right)$ the algebra of all bounded operators on $L^2(M,\operatorname{tr})$. Recall that a $w^*$-subalgebra $\mathfrak{A}\subset M $ is called {\it masa} (maximal Abelian subalgebra) if $(\mathfrak{A}'\cap M)=\mathfrak{A}$, where $$\mathfrak{A}'=\left\{\left.b\in B\left(L^2(M,\operatorname{tr})\right)\right|\:ba=ab\text{\ for all\ }a\in\mathfrak{A}\right\}$$ is the commutant of $\mathfrak{A}$. Let  $\mathcal{N}(\mathfrak{A})=\left\{ u\in U(M):u\mathfrak{A}
u^*= u^*\mathfrak{A} u =\mathfrak{A}\right\}$ be the {\it normalizer} of $\mathfrak{A}$. Let $\mathcal{N}(\mathfrak{A})^{\prime\prime}$ be the
$w^*$-subalgebra generated by $\mathcal{N}(\mathfrak{A})$. A masa $\mathfrak{A}$ is said to be Cartan if
$\mathcal{N}(\mathfrak{A})^{\prime\prime}=M$.

We need the following claim from \cite{Sinclair} (p. 242).

\begin{Prop}\label{afm}
There exists a masa $\mathfrak{A}$ in $M$ and an AFD-subfactor $F$ of $M$ containing $\mathfrak{A}$ such that
$\mathfrak{A}$ is a Cartan subalgebra of $M$ and $F^\prime\cap M=\mathbb{C}I$.
\end{Prop}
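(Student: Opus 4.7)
My plan is to prove the claim by combining the existence of a Cartan masa of $M$ with a Popa-style inductive construction of an AFD subfactor with trivial relative commutant. Existence of the Cartan masa is the starting point: for the AFD case one can simply take $M=R=\overline{\bigotimes}_{n\geq 1}M_2(\mathbb{C})$ and let $\mathfrak{A}$ be the closure of the diagonal tensor products, which is visibly a Cartan masa of $R$; more generally one needs to assume $M$ admits a Cartan subalgebra and invoke a groupoid/Feldman--Moore realization.

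For the body of the argument I would build an increasing chain of finite-dimensional $*$-subalgebras $B_1\subset B_2\subset\cdots$ of $M$ whose weak closure $F=\bigl(\bigcup_n B_n\bigr)^{\prime\prime}$ is AFD and contains $\mathfrak{A}$. Separability of $M_*$ allows the choice of countable SOT-dense sequences $\{a_n\}\subset M$ and $\{u_n\}\subset\mathcal{N}(\mathfrak{A})$, with $\{u_n\}^{\prime\prime}=M$ by the Cartan hypothesis. At the $n$-th step one enlarges $B_{n-1}$ to a finite-dimensional $B_n$ containing a finer finite-dimensional refinement $\mathfrak{A}_n\supset\mathfrak{A}_{n-1}$ with $\mathfrak{A}_n\nearrow\mathfrak{A}$ in the $L^2$-sense, together with a suitably chosen finite subset of the $u_i$'s. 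Keeping each $B_n$ finite-dimensional makes $F$ AFD as an inductive limit, and the $L^2$-convergence of the $\mathfrak{A}_n$ forces $\mathfrak{A}\subset F$.

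Triviality of $F^\prime\cap M$ follows in two steps. First, since $\mathfrak{A}\subset F$, any element of $F^\prime\cap M$ lies inside the masa $\mathfrak{A}^\prime\cap M=\mathfrak{A}$, so the relative commutant is already reduced to a sub-$w^*$-algebra of $\mathfrak{A}$. Second, by choosing at each stage the new normalizers from $\{u_i\}$ so that their adjoint actions on $\mathfrak{A}$ leave no non-scalar invariant in the limit, any $x\in F^\prime\cap M\subset\mathfrak{A}$ must be fixed by an ergodic family of normalizers and hence be a scalar. The ergodic selection can be arranged using Popa's local-quantization technique, which guarantees that finitely many well-chosen normalizers suffice to separate a prescribed finite set of atoms of $\mathfrak{A}_n$.

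The main obstacle is threading the needle between three competing demands at each inductive step: keeping $B_n$ finite-dimensional (so that $F$ is AFD), arranging $\mathfrak{A}_n\nearrow\mathfrak{A}$ (so that the given Cartan masa, rather than a smaller one, is contained in $F$), and including enough normalizers to force the ergodic behaviour needed for $F^\prime\cap M=\mathbb{C}I$. The third requirement is the delicate one, since the normalizers must be selected compatibly with all previously fixed finite-dimensional data; Popa's local quantization combined with a diagonal argument is what makes the simultaneous choice possible.
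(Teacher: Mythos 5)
The paper does not actually prove Proposition \ref{afm}: it is quoted verbatim from Sinclair--Smith (p.~242), where it appears as Popa's theorem on irreducible hyperfinite subfactors. So there is no in-paper argument to compare yours against; the question is whether your sketch could stand on its own, and in its present form it cannot.

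The decisive gap is your starting point. You begin by assuming that $M$ admits a Cartan subalgebra and propose to build $F$ around it. A general separable ${\rm II}_1$-factor need not have any Cartan subalgebra (Voiculescu's free-entropy argument shows the free group factors $L(F_n)$ have none), so your hypothesis is strictly stronger than what is available, and the proposition is invoked in Section~\ref{pth1} for an arbitrary ${\rm II}_1$-factor. (For the same reason the phrase ``Cartan subalgebra of $M$'' in the statement has to be read as ``Cartan subalgebra of $F$,'' consistently with the paper's subsequent discussion of Cartan masas of $F$; taken literally it would contradict Voiculescu's theorem.) Popa's actual proof runs in the opposite direction: the masa $\mathfrak{A}$ and the finite-dimensional algebras generating $F$ are constructed simultaneously by induction, enlarging at each step a finite-dimensional abelian algebra towards maximal abelianness in $M$ while keeping it diagonal in a growing matrix subalgebra; the resulting $\mathfrak{A}$ is Cartan in $F$ but in general only maximal abelian, not Cartan, in $M$. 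Two further points. First, once a masa $\mathfrak{A}$ of $M$ sits inside a subfactor $F$, triviality of the relative commutant is automatic: $F'\cap M\subset\mathfrak{A}'\cap M=\mathfrak{A}\subset F$ forces $F'\cap M\subset Z(F)=\mathbb{C}I$; your ergodicity step is therefore only needed to guarantee that $F$ is a factor, not as a separate argument for $F'\cap M=\mathbb{C}I$. Second, the genuinely hard point, which your sketch leaves to a gesture at ``local quantization,'' is that a normalizing unitary of $\mathfrak{A}$ cannot in general be placed inside a finite-dimensional subalgebra together with a prescribed finite-dimensional piece of $\mathfrak{A}$; one can only approximate, and controlling those approximations so that the limit algebra is still finite-dimensionally generated, contains all of $\mathfrak{A}$, and has trivial relative commutant is precisely the content of Popa's construction, not a routine diagonal argument.
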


It is well known that, in the context of latter Proposition, one can readily find the family $\left\{ K_n  \right\}_{n=1}^\infty$ of pairwise commuting
${\rm I}_2$-subfactors $K_n\subset F$ which generate $F$. Fix a system of matrix units $\left\{\,^n\!e_{ij}  \right\}_{i,j=1}^2\subset K_n$. Denote by $\mathfrak{A}_K$ an Abelian $w^*$-subalgebra generated by $\left\{\,^r\!e_{11},\,^r\!e_{22}\right\}_{r=1}^\infty$.  It is easy to check  that $\mathfrak{A}_K$ is a Cartan subalgebra in $F$. Since any two Cartan masas $\mathfrak{A}_1$ and $\mathfrak{A}_2$ of $F$ are conjugate, i. e. there exists $\theta\in\operatorname{Aut}F$ such that $\theta\left(\mathfrak{A}_1\right)=\mathfrak{A}_2$, we can assume without loss of generality that the masa $\mathfrak{A}$ coincides with $\mathfrak{A}_K$.

Let $E$ be a unique {\it conditional expectation} of $M$ onto $\mathfrak{A}$ with respect to ${\rm tr}$
\cite{TAKES1}. In particular, $E$ is the orthogonal projection of the subspace $L_0$ onto the subspace
$$
L_0^\mathfrak{A}=\left\{x\in L^2\left(\mathfrak{A},\mathrm{tr}\right):\:\mathrm{tr}(x)=0\right\}.
$$

We claim that $E$ belongs to the $w^*$-algebra generated by $\mathfrak{N}({\rm Aut}\,M)$. To see this, consider a
family $\left\{ \Gamma_n  \right\}$ of Abelian finite subgroups of ${\rm Aut}\,M$. Namely,
$\Gamma_n$ is generated by the inner automorphisms ${\rm Ad}\,u$, with the unitaries $u$ belonging to the collection $\left\{\,^r\!e_{11}-\,^r\!e_{22}\right\}_{r=1}^n$. Since $\mathfrak{A}$ is a masa in $M$, one has, in view of Proposition \ref{afm}, that
\begin{equation}\label{masa}
\left(\left\{\,^r\!e_{11}-\,^r\!e_{22}\right\}_{r=1}^\infty\right)'=
\mathfrak{A}.
\end{equation}
Denote by $E_n$ the orthogonal projection in $L^2(M,{\rm tr})$ determined by its values on the dense subset $M\subset L^2(M,{\rm tr})$
\begin{eqnarray}\label{conditional_Exp}
M\ni x\stackrel{E_n}{\mapsto}|\Gamma_n|^{-1}\sum\limits_{\gamma\in\Gamma_n}\gamma(x).
\end{eqnarray}
Since $E_r\geq E_{r+1}$, the sequence $E_r$ converges in the strong operator topology. Let $\lim\limits_{r\to\infty}E_r=\widetilde{E}$. Hence, an application of (\ref{masa}) and (\ref{conditional_Exp}) yields
\begin{equation*}
\begin{split}
&\widetilde{E}(x)\in \mathfrak{A},
\\ &{\rm tr}(\widetilde{E}(x))={\rm tr}(x)\qquad\text{\ for all\ } x\in M,
\\ &\widetilde{E}(axb)=a\widetilde{E}(x)b\qquad\text{\ for all\ } a,b\in\mathfrak{A},\quad x\in M.
\end{split}
\end{equation*}
Therefore, $\widetilde{E}$ is the conditional expectation onto $\mathfrak{A}$. It follows that
$\widetilde{E}=E$. Thus, in view of (\ref{conditional_Exp}), $E$ belongs to the $w^*$-algebra generated by
$\mathfrak{N}({\rm Inn}\,M)$. Therefore,
\begin{eqnarray}
A^\prime L_0^\mathfrak{A}\subset L_0^\mathfrak{A} \text{\ for all\ } A^\prime \in\left(
\mathfrak{N}_I^0({\rm Inn}\,M) \right)^\prime.
\end{eqnarray}
The uniqueness of conditional expectation implies
\begin{eqnarray*}
{\rm Ad}\,u\;\circ E\circ{\rm Ad}\,u^*\;=E \text{\ for all\ } u\in\mathcal{N}(\mathfrak{A}).
\end{eqnarray*}
This is to be rephrased by claiming that the action of ${\rm Ad}\,\mathcal{N}(\mathfrak{A})$ leaves invariant $L_0^\mathfrak{A}$:
\begin{eqnarray}
{\rm Ad}\,u \;(a)\in L_0^\mathfrak{A} \text{\ for all\ } a\in L_0^\mathfrak{A},\quad u\in\mathcal{N}(\mathfrak{A}).
\end{eqnarray}
Now to prove Theorem \ref{natural_action}, it suffices to demonstrate the following:
\begin{description}
\item[\bf a)] the action of $\mathcal{N}(\mathfrak{A})$, $u\mapsto{\rm Ad}\,u$, leaves no non-trivial closed subspace of $L_0^\mathfrak{A}$ invariant;

\item [\bf b)] the subspace $ L_0^\mathfrak{A}\subset L_0$ is cyclic with respect to $\mathfrak{N}({\rm Inn}\,M)$; i. e. the smallest closed subspace, containing $\bigcup\limits_{\theta\in
      {\rm Inn}\,M}\mathfrak{N}(\theta)L_0^\mathfrak{A} $, is just $L_0$.
\end{description}

Let us start with proving {\bf a)}. Consider an arbitrary unitary $$u\in \left\{ K_1,K_2,\ldots, K_n  \right\}^{\prime\prime},$$ to be expanded as
$$
u=\sum\limits_{j_1,k_1,j_2,k_2,\ldots,j_n,k_n=1}^2
u_{j_1k_1\,j_2k_2\,\ldots\,j_nk_n}\;\,^1\!e_{j_1k_1}\,^2\!e_{j_2k_2}\,\ldots\,^n\!e_{j_nk_n},
$$
where $u_{j_1k_1\,j_2k_2\,\ldots\,j_nk_n}\in \mathbb{C}$. Denote by $\mathfrak{S}_{2^n}$ the group of all bijections of the set $X_n=\left\{\left( i_1,i_2,\ldots,i_n \right),\;\;i_r\in\{1,2\}\right\}$. Within our current argument, the symmetric group $\mathfrak{S}_{2^n}$ is about to be identified with the subgroup
$$
\left\{u\in\left\{ K_1,K_2,\ldots, K_n  \right\}''\cap
U(M):u_{j_1k_1\,j_2k_2\,\ldots\,j_nk_n}\in\{0,1\} \right\}\subset \mathcal{N}(\mathfrak{A}),
$$
in terms of the above expansion for $u\in\left\{K_1,K_2,\ldots,K_n\right\}''$.
It is also convenient to denote by $\mathbf{i}_n$ the multiindex $\left(i_1,i_2,\ldots,i_n \right)$. Clearly, the collection of vectors $\left\{\mathbf{\mathfrak{e}}_{\mathbf{i}_n}=
\,^1\!e_{i_1i_1}\,^2\!e_{i_2i_2}\,\ldots\,^n\!e_{i_ni_n}\right\}$ forms an orthogonal basis of the subspace $\mathfrak{A}_n=\mathfrak{A}\cap\left\{K_1,K_2,\ldots,K_n\right\}''$.

Let $\mathfrak{E}_n$ be the orthogonal projection of $L^2\left( \mathfrak{A},{\rm tr} \right)$ onto
 $\mathfrak{A}_n$, and consider a bounded operator $B^\prime\in\left( {\rm Ad}\, \mathcal{N}\left(
 \mathfrak{A} \right) \right)^{\prime}$. It is clear that  $\,^n\!B^\prime\stackrel{\operatorname{def}}{=}\mathfrak{E}_nB^\prime
 \mathfrak{E}_n$ belongs to $\left( {\rm Ad}\,\mathfrak{S}_{2^n} \right)^\prime$ and
 \begin{eqnarray}\label{approximation}
 \lim\limits_{n\to\infty}\,^n\!B^\prime=\,B^\prime \text{\ in the strong operator topology}.
 \end{eqnarray}
Hence, denoting the matrix element $\left(\,^n\!B^\prime
\mathbf{\mathfrak{e}}_{\mathbf{i}_n},\mathbf{\mathfrak{e}}_{\mathbf{j}_n} \right)$ by
$\,^n\!B^\prime_{\mathbf{i}_n\,\mathbf{j}_n}$, one has
\begin{eqnarray*}
\,^n\!B^\prime_{s(\mathbf{i}_n)\,s(\mathbf{j}_n)}=\,^n\!B^\prime_{\mathbf{i}_n\,\mathbf{j}_n}\text{ for
all } s\in\mathfrak{S}_{2^n}.
\end{eqnarray*}
Therefore, there exist $\gamma,\delta\in \mathbb{C}$ such that
 \begin{eqnarray*}
 \,^n\!B^\prime_{\mathbf{i}_n\,\mathbf{j}_n}=\left\{
 \begin{array}{rl}
 \gamma,&\text{ if\ } \mathbf{i}_n\neq\mathbf{j}_n;\\
 \delta,&\text{ if\ }  \mathbf{i}_n=\mathbf{j}_n.
 \end{array}\right.
 \end{eqnarray*}
It follows that
 \begin{eqnarray*}
 \,^n\!B^\prime\eta=(\delta-\gamma)\eta \text{ for all } \eta\in L_0^\mathfrak{A}\cap\mathfrak{A}_n.
 \end{eqnarray*}
 Hence, applying (\ref{approximation}), we obtain that $B^\prime\eta=(\delta-\gamma)\eta \text{ for all }
 \eta\in L_0^\mathfrak{A}$. This proves {\bf a}).

Turn to proving {\bf b)}. It suffices to demonstrate that, given a self-adjoint $B\in M$ and $\epsilon>0$, there
exist $A\in\mathfrak{A}$ and $U\in U(M)$ with the property
 \begin{eqnarray}\label{inequality}
 \|B-UAU^*\|<\epsilon, \text{\ where\ } \|\cdot\| \text{\ stands for the operator norm}.
 \end{eqnarray}
Choose a positive integer $n>\frac{\|B\|}{\epsilon}$ and consider the set of reals
$$
\Delta_l=\left\{r\left|\:\frac{2(l-1)\|B\|}{n}-\|B\|<r
\le\frac{2l\|B\|}{n}-\|B\|\right.\right\}
$$
for each $l=0,1,\ldots,n$. Let $E(\Delta_l)$ be the associated spectral projection related to the spectral decomposition of $B$. Under this setting, with
$$
\alpha_l=\frac{(2l-1)\|B\|}{n}-\|B\|,\qquad B_n=\sum\limits_{l=0}^n\alpha_lE\left(\Delta_l\right),
$$
we conclude that
\begin{equation}
\|B-B_n\|\leq\epsilon.
\end{equation}
One can readily find a family $\left(F_l\right)_{l=0}^n$ of pairwise orthogonal projections in $\mathfrak{A}$ such that $\mathrm{tr}\left(F_l\right)=\mathrm{tr}\left(E(\Delta_l)\right)$. Thus we can also select partial isometries $u_l\in M$ with the properties $u_lu_l^*=E(\Delta_l)$ and $u_l^*u_l=F_l$ for all $l=1,2,\ldots,n$. It follows that $U=\sum\limits_{l=0}^nu_l$ is a unitary operator, and with
$A=\sum\limits_{l=0}^n\alpha_lF_l$ the inequality (\ref{inequality}) holds.

\section{Proof of theorem \ref{main_Schur-Weyl}}\label{pr_th_2}

Notice first that there exists a family $\left\{ N_j \right\}_{j=1}^\infty$ of pairwise commuting type
$\mathrm{I}_k$ subfactors $N_j\subset M$ generating $M$. Let $M_{jJ}=\left(\left\{ N_l
\right\}_{l=j}^{J}\right)^{\prime\prime}$. Fix a system of matrix units $\left\{ \,^n\!e_{ij}
\right\}_{i,j=1}^k\subset N_n$. Denote by $\mathfrak{A}$ an Abelian $w^*$-subalgebra generated by $\left\{
\,^l\!e_{11}, \,^l\!e_{22}, \,\ldots, \,^l\!e_{kk} \right\}_{l=1}^\infty$. One can reproduce here the argument used at the beginning of Section \ref{pth1} to demonstrate that $\mathfrak{A}$ is a Cartan MASA in $M$.

\subsection{The conditional expectation from $M^{\otimes k}$ onto $\mathfrak{A}^{\otimes k}$}\label{CE}

It is well known that there exists a unique conditional expectation $^k\!{\mathop{E}}$ from the ${\rm II}_1$-factor
$M^{\otimes k}$ onto the Cartan MASA $\mathfrak{A}^{\otimes k}\subset M^{\otimes k}$. Recall that $\,^k\!{\mathop{E}}$ is
uniquely determined by the following properties (see \cite{TAKES1}):
\begin{description}
\item[\bf 1)] $\,^k\!E$ is continuous with respect to the strong operator topology and $\,^k\!E\,{\rm I}={\rm I}$;

\item[\bf 2)] $\,^k\!E\left( a_1ma_2 \right)=a_1\,^k\!E(m)a_2$ for all $m\in M^{\otimes k}$ and
      $a_1,a_2\in\mathfrak{A}^{\otimes k}$;

\item[\bf 3)] ${\rm tr}^{\otimes k}(\,^k\!Em)={\rm tr}^{\otimes k}(m)$ for all $m\in M^{\otimes k}$.
\end{description}
We prove be\-low that $\,^k\!E$ be\-longs to  $\left(\mathfrak{N}^{\otimes k}\left(\operatorname{Ad}\,U(M)\right)\right)^{''}$.

With $\mathbf{i}_J=(i_1,i_2,\ldots,i_J)$, let $\mathfrak{e}_{\mathbf{i}_J}$ stand for the minimal projection
$$\,^1\!e_{i_1i_1}\,^2\!e_{i_2i_2}\cdots\,^J\!e_{i_Ji_J}$$ of the algebra  $M_{1J}\cap\mathfrak{A}$. Let
$\,^n\!f$ be the embedding of the finite set
 $$\mathfrak{I}_J=\left\{ \mathbf{i}_J=(i_1,i_2,\ldots,i_J)  \right\}_{i_1,i_2,\ldots,i_J=1}^k$$
into $\left\{ n+1,n+2,\ldots  \right\}$. Set
$\,^p\!u=\,^p\!e_{k1}+\sum\limits_{l=1}^{k-1}\,^p\!e_{l\;l+1}\;\in N_p$.

\begin{Lm}\label{approximation_CE}
Consider the unitary $\,^J\!U_n=\sum\limits_{\mathbf{i}_J\in\mathfrak{I}_J}\;\mathfrak{e}_{\mathbf{i}_J}\cdot
\,^p\!u$, where $p=\,^n\!f\left( \mathbf{i}_J \right)$ and $n>J$. Then for any $ m\in M$ the sequence $\mathfrak{N}\left(\operatorname{Ad}\left(\sideset{^J}{_n}{\mathop{U}}
\right)\right)m$ converges in the weak operator topology so that
$\lim\limits_{n\to\infty}\mathfrak{N}\left(\operatorname{Ad}(^J\!U_n)\right)m
=E_J(m)$, with
\begin{equation}\label{formula_approx_CE}
E_J(m)=\sum\limits_{\mathbf{i}_J\in\mathfrak{I}_J}\;\;\mathfrak{e}_{\mathbf{i}_J}\cdot m\cdot\;\mathfrak{e}_{\mathbf{i}_J}\in\mathfrak{A}'\cap M_{1J}.
\end{equation}
In particular, $E_J$ belongs to the $w^*$-algebra generated by $\mathfrak{N}\left(\operatorname{Ad}\,U(M)\right)$.
\end{Lm}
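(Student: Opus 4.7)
The plan is to reduce weak $L^2$-convergence of $\mathfrak{N}(\operatorname{Ad}({}^J U_n))m$ to an exact identity against a dense family of test vectors, by exploiting independence of commuting subfactors together with the fact that the cyclic permutation unitary $^pu$ has zero trace.

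First I would verify that $^J U_n$ is unitary. Because $^nf$ maps into $\{n+1,n+2,\ldots\}$ with $n>J$, each $^{p_{\mathbf{i}}}u\in N_{p_{\mathbf{i}}}$ commutes with every projection $\mathfrak{e}_{\mathbf{j}_J}\in M_{1J}$; combining this with the orthogonality of the $\mathfrak{e}_{\mathbf{i}_J}$ and $^{p_{\mathbf{i}}}u\cdot {}^{p_{\mathbf{i}}}u^*=I$ yields ${}^J U_n\cdot {}^J U_n^*=I$. Expanding the conjugation gives
\begin{equation*}
{}^J U_n\cdot m\cdot {}^J U_n^* = \sum_{\mathbf{i},\mathbf{j}\in\mathfrak{I}_J} \mathfrak{e}_{\mathbf{i}}\cdot {}^{p_{\mathbf{i}}}u\cdot m\cdot {}^{p_{\mathbf{j}}}u^*\cdot \mathfrak{e}_{\mathbf{j}}.
\end{equation*}

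Next, since $\mathfrak{N}(\operatorname{Ad}({}^J U_n))$ acts as an isometry on $L^2(M,\operatorname{tr})$ and $\bigcup_{J''\geq J}M_{1J''}$ is dense in $L^2(M,\operatorname{tr})$, a straightforward $\varepsilon/3$ argument (using also that $E_J$ is $L^2$-contractive, via an easy direct estimate) reduces the weak convergence to the case where both $m$ and the test vector $\eta$ lie in some $M_{1J''}$. Choose $n$ large enough that $p_{\mathbf{i}}>J''$ for every $\mathbf{i}\in\mathfrak{I}_J$. Then $^{p_{\mathbf{i}}}u\in N_{p_{\mathbf{i}}}$ commutes with both $m$ and each $\mathfrak{e}_{\mathbf{j}}$, and the expansion rearranges as
\begin{equation*}
\sum_{\mathbf{i},\mathbf{j}} \mathfrak{e}_{\mathbf{i}}m\mathfrak{e}_{\mathbf{j}}\cdot {}^{p_{\mathbf{i}}}u\cdot {}^{p_{\mathbf{j}}}u^*.
\end{equation*}
The diagonal part $\mathbf{i}=\mathbf{j}$ collapses via $^{p_{\mathbf{i}}}u\cdot {}^{p_{\mathbf{i}}}u^*=I$ to exactly $\sum_{\mathbf{i}}\mathfrak{e}_{\mathbf{i}}m\mathfrak{e}_{\mathbf{i}}=E_J(m)$.

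The crux is that the off-diagonal terms pair to zero against $\eta$. Injectivity of $^nf$ forces $p_{\mathbf{i}}\neq p_{\mathbf{j}}$ when $\mathbf{i}\neq\mathbf{j}$, so the three subfactors $M_{1J''}$, $N_{p_{\mathbf{i}}}$, $N_{p_{\mathbf{j}}}$ pairwise commute and the trace factorizes across them:
\begin{equation*}
\operatorname{tr}\bigl(\eta^*\mathfrak{e}_{\mathbf{i}}m\mathfrak{e}_{\mathbf{j}}\cdot {}^{p_{\mathbf{i}}}u\cdot {}^{p_{\mathbf{j}}}u^*\bigr)=\operatorname{tr}\bigl(\eta^*\mathfrak{e}_{\mathbf{i}}m\mathfrak{e}_{\mathbf{j}}\bigr)\cdot\operatorname{tr}\bigl({}^{p_{\mathbf{i}}}u\bigr)\cdot\operatorname{tr}\bigl({}^{p_{\mathbf{j}}}u^*\bigr).
\end{equation*}
The cyclic permutation $^pu$ has no diagonal matrix-unit contribution and hence $\operatorname{tr}({}^pu)=0$, which kills the off-diagonal contribution. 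Thus for all sufficiently large $n$ one has exact equality $\langle {}^J U_n\, m\, {}^J U_n^*,\eta\rangle=\langle E_J(m),\eta\rangle$, and the density/boundedness argument extends this to arbitrary $m\in M$ and $\eta\in L^2(M,\operatorname{tr})$. Since $E_J$ is thereby realized as a weak operator limit of elements of $\mathfrak{N}(\operatorname{Ad}\,U(M))$, it lies in the generated $w^*$-algebra.

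The main obstacle is purely bookkeeping, namely tracking the commutations carefully enough to isolate the clean factorization of traces; conceptually the argument exploits the fact that conjugation by a cyclic permutation unitary living far out in the tail ``pinches'' any bounded operator to its diagonal piece in the weak limit, while the unitaries employed are genuinely inner automorphisms of $M$.
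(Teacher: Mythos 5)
Your proposal is correct and follows essentially the same route as the paper: reduce to $m$ in some local algebra $M_{1L}$ by density, expand the conjugation, identify the diagonal terms with $E_J(m)$, and kill the off-diagonal terms using that $\,^p\!u\cdot\,^q\!u^*$ (for $p\neq q$ living far out in the tail) converges weakly to $\operatorname{tr}\left(\,^p\!u\cdot\,^q\!u^*\right)\mathrm{I}=0$. Your explicit trace factorization against local test vectors is precisely the standard justification of that weak limit, so the two arguments coincide in substance.
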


\begin{proof}
Since the algebra $\bigcup\limits_{Q=1}^\infty \; M_{1Q}$ is dense in $M$ in the strong operator topology, one
can assume without loss of generality that $m\in M_{1L}$, where $L>J$. Under this assumption, we have with
$n>L$
\begin{eqnarray*}
\,^J\!U_n\cdot\,m \cdot
\,^J\!U_n^*=\sum\limits_{\mathbf{i}_J,\mathbf{r}_J\in\mathfrak{I}_J}\;\mathfrak{e}_{\mathbf{i}_J}\cdot
m\cdot \;\mathfrak{e}_{\mathbf{r}_J}\cdot \,^p\!u\cdot \,^q\!u^*,
\end{eqnarray*}
where $p=\,^n\!f\left(
\mathbf{i}_J \right)$, $q= \,^n\!f\left( \mathbf{r}_J \right)$.
Note that with $\mathbf{i}_J\neq\mathbf{r}_J$ one has
$$\lim\limits_{n\to\infty}\,^p\!u\cdot \,^q\!u^*={\rm tr}\left(\,^p\!u\cdot \,^q\!u^*  \right)\operatorname{I}=0$$
in the weak operator topology. Therefore, $\lim\limits_{n\to\infty}\,^J\!U_n\cdot\,m \cdot \,^J\!U_n^*=E_J(m)$.
\end{proof}

\begin{Rem}\label{Remark_conditional_ex}
Clearly, $E_J$ is an orthogonal projection in $L^2\left( M,{\rm tr} \right)$.
Also, one readily observes that $E_J\geq E_{J+1}$ for all $J$. Hence for any $m\in L^2(M,{\rm tr})$ there exists $$\lim\limits_{J\to\infty} E_J(m)=E(m).$$ In particular,
\begin{equation}\label{Cond_expect_local}
E(m)=E_J(m) \text{\ for all\ } m\in M_{1J}.
\end{equation}
It is easy to verify that $E$ is the unique {\it conditional expectation} of $M$ onto $\mathfrak{A}$ with respect to ${\rm tr}$ \cite{TAKES1}. On the other hand, {\bf 1}) -- {\bf 3}) are valid also for the projection $E^{\otimes k}$. The uniqueness of conditional expectation now implies
\begin{equation}\label{Cond_expect_tensor_product}
\,^k\!E\left( m_1\otimes m_2\otimes\cdots\otimes m_k \right)= E(m_1)\otimes  E(m_2)\otimes\cdots\otimes E( m_k)
\end{equation}
for all  $m_1, m_2,\ldots, m_k\in M$.
\end{Rem}

\begin{Prop}\label{limits_of_E_J}
$\,^k\!E\in\left(\mathfrak{N}^{\otimes k}\left(\operatorname{Ad}\,U(M)\right)\right)^{\prime\prime}$.
\end{Prop}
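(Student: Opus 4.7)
The plan is to apply Lemma \ref{approximation_CE} twice: first at fixed $J$ to realize $E_J$ as a WOT-limit of diagonal conjugations $\mathfrak{N}(\operatorname{Ad}\,^J\!U_n)$, and then, by letting $J\to\infty$, to realize $\,^k\!E$ as the SOT-limit of $E_J^{\otimes k}$. The key technical input is that uniformly bounded operator sequences preserve WOT (and SOT) convergence when raised to their $k$-th tensor power, because the matrix coefficients on simple tensors factor as products of scalar sequences.

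First I fix $J$. The unitaries $\mathfrak{N}^{\otimes k}(\operatorname{Ad}\,^J\!U_n)=\mathfrak{N}(\operatorname{Ad}\,^J\!U_n)^{\otimes k}$ lie inside $\left(\mathfrak{N}^{\otimes k}(\operatorname{Ad}\,U(M))\right)''$ by construction. For simple tensors $u_i,w_i\in L^2(M,\operatorname{tr})$,
\begin{equation*}
\bigl\langle\mathfrak{N}(\operatorname{Ad}\,^J\!U_n)^{\otimes k}(u_1\otimes\cdots\otimes u_k),\,w_1\otimes\cdots\otimes w_k\bigr\rangle=\prod_{i=1}^k\bigl\langle\mathfrak{N}(\operatorname{Ad}\,^J\!U_n)u_i,\,w_i\bigr\rangle.
\end{equation*}
By Lemma \ref{approximation_CE}, each of these $k$ scalar factors converges to $\langle E_Ju_i,w_i\rangle$ and is bounded uniformly by $\|u_i\|\|w_i\|$; hence the product tends to $\langle E_J^{\otimes k}(u_1\otimes\cdots\otimes u_k),w_1\otimes\cdots\otimes w_k\rangle$. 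Combined with the uniform norm bound $\|\mathfrak{N}(\operatorname{Ad}\,^J\!U_n)^{\otimes k}\|=1$ and density of finite sums of simple tensors in $L^2(M,\operatorname{tr})^{\otimes k}$, this yields $\mathfrak{N}^{\otimes k}(\operatorname{Ad}\,^J\!U_n)\to E_J^{\otimes k}$ in the weak operator topology. Since the bicommutant is WOT-closed, $E_J^{\otimes k}\in\left(\mathfrak{N}^{\otimes k}(\operatorname{Ad}\,U(M))\right)''$ for every $J$.

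To finish, I let $J\to\infty$. By Remark \ref{Remark_conditional_ex} the contractions $E_J$ converge to $E$ in SOT, so the standard telescoping identity
\begin{equation*}
E_J^{\otimes k}-E^{\otimes k}=\sum_{i=1}^kE_J^{\otimes(i-1)}\otimes(E_J-E)\otimes E^{\otimes(k-i)},
\end{equation*}
combined with $\|E_J\|=\|E\|=1$, gives $E_J^{\otimes k}\to E^{\otimes k}$ in SOT on all simple tensors and, by density and uniform boundedness, on the entire Hilbert space. Equation \eqref{Cond_expect_tensor_product} identifies this limit as $\,^k\!E$. Since $\left(\mathfrak{N}^{\otimes k}(\operatorname{Ad}\,U(M))\right)''$ is SOT-closed and contains each $E_J^{\otimes k}$, it contains $\,^k\!E$, which is what was to be shown.

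The only delicate point is precisely the tensor-power passage of weak and strong limits: this is routine for \emph{uniformly bounded} sequences (our case, since each approximant is unitary or a projection), but would fail without that control. Once it is spelled out, the proposition reduces entirely to Lemma \ref{approximation_CE} together with formula \eqref{Cond_expect_tensor_product} from Remark \ref{Remark_conditional_ex}; no further analysis of the Cartan structure is needed.
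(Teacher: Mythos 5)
Your proof is correct and follows essentially the same route as the paper: Lemma \ref{approximation_CE} gives $E_J^{\otimes k}\in\left(\mathfrak{N}^{\otimes k}\left(\operatorname{Ad}U(M)\right)\right)''$ for each $J$, and the limit $J\to\infty$ then stays in the bicommutant. The only (harmless) difference is in the final step: the paper deduces SOT-convergence from the monotonicity $E_J^{\otimes k}\ge E_L^{\otimes k}$ of projections and identifies the limit as $\,^k\!E$ via the uniqueness of the conditional expectation, whereas you identify the limit directly as $E^{\otimes k}=\,^k\!E$ through a telescoping estimate together with \eqref{Cond_expect_tensor_product}.
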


\begin{proof}
Let $E_J^{\otimes k}(m_1\otimes m_2\otimes\cdots\otimes m_k)\stackrel{\mathrm{def}}{=}E_J(m_1)\otimes E_J(m_2)\otimes\cdots\otimes
E_J(m_k)$. By Lemma \ref{approximation_CE},
\begin{eqnarray}
E_J^{\otimes k}\in \left(\mathfrak{N}^{\otimes k}\left(\operatorname{Ad}\,U(M)\,\right)\right)''.
\end{eqnarray}
$E_J^{\otimes k}$ is an orthogonal projection in $L^2\left( M^{\otimes k},{\rm tr}^{\otimes k}
\right)$ and $E_J^{\otimes k}\geq E_L^{\otimes k}$ for all $L>J$. It follows that for any $m\in L^2\left(
M^{\otimes k},{\rm tr}^{\otimes k} \right)$ there exists $\lim\limits_{J\to\infty} E_J^{\otimes
k}(m)\stackrel{\mathrm{def}}{=}\widetilde{E}(m)\in M^{\otimes k}\cap\left(\mathfrak{A}^{\otimes k}\right)'$. Therefore, $\widetilde{E}\in  \left(\mathfrak{N}^{\otimes k}\left({\rm Ad}\, U(M)
\right)\right)^{\prime\prime}$. An application of (\ref{formula_approx_CE}) allows one to verify that {\bf 1}) -- {\bf 3}) are valid for $\widetilde{E}$. Since $\mathfrak{A}^{\otimes k}$ is a MASA in
$M^{\otimes k}$, we conclude that $\widetilde{E}\left(  M^{\otimes k}\right)=\mathfrak{A}^{\otimes k}$. Therefore,
$\widetilde{E}$ is a conditional expectation from  $M^{\otimes k}$ onto $\mathfrak{A}^{\otimes k}$, hence $\widetilde{E}=\,^k\!E=E^{\otimes k}$ by \eqref{Cond_expect_tensor_product}.
\end{proof}

\subsection{The operators $\,^k\!E\cdot\mathfrak{N}^{\otimes k}(u)\,\cdot\,^k\!E$ on $L^2\left( \mathfrak{A}^{\otimes k}, {\rm tr}^{\otimes k} \right)$.}\label{mu_section}

With $\mathbf{i}_J=(i_1,i_2,\ldots,i_J)  $, $\mathbf{i}_J^\prime=(i_1^\prime,i_2^\prime,\ldots,i_J^\prime)$, denote the partial isometry \;\;\;\;\;\;\; $\,^1\!e_{i_1i_1^\prime}\,^2\!e_{i_2i_2^\prime}\cdots\,^J\!e_{i_Ji_J^\prime}$ $\in M_{1J}$ by $\mathfrak{e}_{\mathbf{i}_J\,\mathbf{i}_J^\prime}$. Given a collection $\,^l\!x\in M_{1J}$, $1\le l\le k$, we use below the expansion $$\,^l\!x=\sum\limits_{\mathfrak{i}_J,\mathfrak{i}^\prime_J\in\mathfrak{I}_J}
\,^l\!c_{\mathbf{i}_J\,\mathbf{i}_J^\prime}
\mathfrak{e}_{\mathbf{i}_J\,\mathbf{i}_J^\prime}\in M_{1J},  \text{ where } \,^l\!c_{\mathbf{i}_J\,\mathbf{i}_J^\prime}\in\mathbb{C}.$$
In view of \eqref{Cond_expect_tensor_product} one has
\begin{equation}
\begin{split}
&\,^k\!{\mathop{E}}\left(^1\!x\otimes\,^2\!x \otimes\cdots\otimes\,^k\!x \right)=E_J(^1\!x)\otimes E_J(^2\!x ) \otimes\cdots\otimes E_J(^k\!x)
\\ =&\left(\sum\limits_{\mathfrak{i}_J\in\mathfrak{I}_J}\!\!\!
\sideset{^1}{_{\mathbf{i}_J\mathbf{i}_J}}{\mathop{\!c}}
\mathfrak{e}_{\mathbf{i}_J\,\mathbf{i}_J} \right)\otimes \left(\sum\limits_{\mathfrak{i}_J\in\mathfrak{I}_J}\!\!\!
\sideset{^2}{_{\mathbf{i}_J\mathbf{i}_J}}{\mathop{\!c}}
\mathfrak{e}_{\mathbf{i}_J\,\mathbf{i}_J}\right)\otimes\cdots\otimes
\left(\sum\limits_{\mathfrak{i}_J\in\mathfrak{I}_J}\!\!\!
\sideset{^k}{_{\mathbf{i}_J\mathbf{i}_J}}{\mathop{\!c}}
\mathfrak{e}_{\mathbf{i}_J\,\mathbf{i}_J} \right).
\end{split}
\end{equation}
Note that in Subsection \ref{CE} another notation $\mathfrak{e}_{\mathbf{i}_J}$ was used for $\mathfrak{e}_{\mathbf{i}_J\,\mathbf{i}_J}$.

Consider a unitary $u=\sum\limits_{\mathfrak{i}_J,\mathfrak{i}^\prime_J\in\mathfrak{I}_J}\;
u_{\mathbf{i}_J\,\mathbf{i}_J^\prime}\cdot\mathfrak{e}_{\mathbf{i}_J\,\mathbf{i}_J^\prime}\;\in M_{1J}$ and a collection $\,^l\!a=\sum\limits_{\mathfrak{i}_J\in\mathfrak{I}_J}\;\,^l\!a_{\mathbf{i}_J}\cdot
\mathfrak{e}_{\mathbf{i}_J\,\mathbf{i}_J}\;$ $\in M_{1J}\cap\mathfrak{A}$, $1\le l\le k$, where $u_{\mathbf{i}_J\,\mathbf{i}_J^\prime}, \,^l\!a_{\mathbf{i}_J}\in \mathbb{C}$. Since
\begin{multline*}
\,^k\!E\left(\mathfrak{N}^{\otimes k}\!({\rm Ad}\,u)\!\left( \,^1\!a\otimes\,^2\!a \otimes\cdots\otimes \,^k\!a \right)\right)
\\ =\,^k\!E \left( u\cdot\,^1\!a\cdot\,u^*\otimes \,u\cdot\,^2\!a\cdot\,u^* \otimes\cdots\otimes\,u \cdot\,^k\!a\cdot\,u^* \right),
\end{multline*}
an application of (\ref{Cond_expect_local}) and (\ref{Cond_expect_tensor_product}) yields
\begin{equation}\label{Operator_unistochastic}
\begin{split}
&\,^k\!E\left(\mathfrak{N}^{\otimes k}\!({\rm Ad}\,u)\right)\!\left( \,^1\!a\otimes\,^2\!a \otimes\cdots\otimes \,^k\!a \right)=\,^1\!b\otimes\,^2\!b \otimes\cdots\otimes \,^k\!b,
\text{ where }\\
&\,^l\!b=\sum\limits_{\mathfrak{i}_J\in\mathfrak{I}_J}\;\,^l\!b_{\mathfrak{i}_J}\cdot
\mathfrak{e}_{\mathfrak{i}_J\,\mathfrak{i}_J}\,\in M_{1J}\cap\mathfrak{A}\;\text{ and } \,^l\!b_{\mathfrak{i}_J}=\sum\limits_{\mathfrak{k}_J\in\mathfrak{I}_J}\left|u_{\mathfrak{i}_J\,\mathfrak{k}_J} \right|^2\cdot \,^l\!a_{\mathfrak{k}_J}.\;\;\;\;
\end{split}
\end{equation}
This way the map
$$
\mu:M_{1J}\cap U(M)\to M_{1J};\qquad \sum\limits_{\mathfrak{i}_J,\mathfrak{i}'_J\in\mathfrak{I}_J}\;
u_{\mathbf{i}_J\,\mathbf{i}_J'}\cdot
\mathfrak{e}_{\mathbf{i}_J\,\mathbf{i}_J'}\mapsto
\sum\limits_{\mathfrak{i}_J,\mathfrak{i}'_J\in\mathfrak{I}_J}\;
\left|u_{\mathbf{i}_J\,\mathbf{i}_J'}\right|^2\cdot
\mathfrak{e}_{\mathbf{i}_J\,\mathbf{i}_J'}.
$$
is introduced. It is to be studied and used in what follows.

Note that $\left|u_{\mathfrak{i}_J\mathfrak{k}_J}\right|^2$ form a {\it doubly stochastic} matrix (see Section \ref{U_to_DS}), hence
\begin{eqnarray}
\sum\limits_{\mathfrak{i}_J\in\mathfrak{I}_J}\;\,^l\!a_{\mathbf{i}_J}= \sum\limits_{\mathfrak{i}_J\in\mathfrak{I}_J}\;\,^l\!b_{\mathbf{i}_J}\; \text{\ for all\ }l.
\end{eqnarray}

\subsubsection{Some properties of the map $\mu$}

Set $n=k^J$. To simplify the notation, it is custom (and really convenient) to identify $m=\sum\limits_{\mathfrak{i}_J,\mathfrak{i}^\prime_J\in\mathfrak{I}_J}\;
m_{\mathbf{i}_J\,\mathbf{i}_J^\prime}\cdot\mathfrak{e}_{\mathbf{i}_J\,\mathbf{i}_J^\prime}$ $\in M_{1J}$ with the associated matrice $\left[m_{\mathbf{i}_J\,\mathbf{i}_J'}\right]$. Let $M_{1J}(\mathbb{R})$ be the subset of real matrices in $M_{1J}$. Denote also by $GL(n,\mathbb{R})$
the subgroup of all invertible elements of $M_{1J}(\mathbb{R})$. A matrix $m=\left[m_{\mathbf{i}_J\,\mathbf{i}_J'}\right]\in M_{1J}$
is said to be {\it doubly stochastic} if its elements satisfy
\begin{equation*}
\begin{split}
&m_{\mathbf{i}_J\,\mathbf{i}_J'}\ge 0\text{\ for all\ }\mathbf{i}_J\,\mathbf{i}_J',\\ &\sum\limits_{\mathbf{i}_J\in\mathfrak{I}_J}m_{\mathbf{i}_J\,\mathbf{i}_J'}=1
\text{\ for all\ }\mathbf{i}_J'\qquad\text{\ and \ }\qquad
\sum\limits_{\mathbf{i}_J'\in\mathfrak{I}_J}m_{\mathbf{i}_J\,\mathbf{i}_J'}=1 \text{\ for all\ }\mathbf{i}_J.
\end{split}
\end{equation*}
The set of doubly stochastic matrices is a convex polytope known as Birkhoff’s polytope \cite{Birkhoff}. Denote by $\mathcal{DS}_n$ this polytope. Set $p=\left[p_{\mathbf{i}_J\,\mathbf{i}_J'}\right]$, where $p_{\mathbf{i}_J\,\mathbf{i}_J^\prime}=\frac{1}{n}$ for all $\mathbf{i}_J,\,\mathbf{i}_J'$. A routine verification demonstrates that $p$ is a {\it minimal orthogonal projection} from $M_{1J}$. If $m=\left[ m_{\mathbf{i}_J\,\mathbf{i}_J^\prime} \right]\in\mathcal{DS}_n$ then
\begin{eqnarray}\label{form_of_double_stochastic}
mp=pm=p \;\text{  and }\; m=p+(I-p)m(I-p).
\end{eqnarray}

A natural method of producing a doubly stochastic matrix is to start with a unitary matrix $u=\left[u_{\mathfrak{i}_J\mathfrak{k}_J}\right]$ and then to set $\mu(u)=\left[\left|u_{\mathfrak{i}_J\mathfrak{k}_J}\right|^2\right]\in
\mathcal{DS}_n$. The matrices of the form $\mu(u)$ with $u$ unitary are called {\it unistochastic}.

It is well known that for $n>3$ there are doubly stochastic matrices that are
not unistochastic \cite{Olkin}.

Let the notation $G$ stand for the set of those $g=\left[g_{\mathbf{i}_J\,\mathbf{i}_J'}\right]\in GL(n,\mathbb{R})$ which satisfy $\sum\limits_{\mathbf{i}_J\in\mathfrak{I}_J}g_{\mathbf{i}_J\,\mathbf{i}_J'}=1$ for all $\mathbf{i}_J'\in\mathfrak{I}_J$ and $\sum\limits_{\mathbf{i}_J'\in
\mathfrak{I}_J}g_{\mathbf{i}_J\,\mathbf{i}_J'}=1$ for all $\mathbf{i}_J\in\mathfrak{I}_J$. The latter relations are obviously equivalent to the vector $\left(\begin{smallmatrix}1\\ 1\\ \vdots\\ 1\end{smallmatrix}\right)$ being invariant under both $g$ and the transpose $g^t$ with respect to matrix multiplication, hence $G$ is a subgroup. One can clearly reproduce (\ref{form_of_double_stochastic}) for $g\in G$:
\begin{eqnarray}
g=p+(I-p)g(I-p).
\end{eqnarray}
Consider the one parameter family  $\,^\theta\!U=\left[ \,^\theta\!U_{\mathbf{i}_J\,\mathbf{i}_J^\prime} \right]$ of unitary matrices, where
\begin{eqnarray}\label{U_theta_formula}
\,^\theta\!U_{\mathbf{i}_J\,\mathbf{i}_J'}=
\delta_{\mathbf{i}_J\,\mathbf{i}_J'}+\frac{\theta-1}{n},\qquad
\theta\in\mathbb{T}=\left\{z\in\mathbb{C}:|z|=1\right\}.
\end{eqnarray}
Now we are in a position to apply the above idea of the present Section \ref{mu_section} in order to introduce the map $\mu:\operatorname{Inn}M\to\mathcal{DS}_n$ given by
$$
\operatorname{Ad}U\mapsto \left[\left|U_{\mathbf{i}_J\,\mathbf{i}_J'}\right|^2\right],\text{\ where\ } U=\left[U_{\mathbf{i}_J\,\mathbf{i}_J'}\right].
$$

An easy calculation demonstrates that
\begin{eqnarray}\label{matrix_U_unistoh}
\mu\left( \,^\theta\!U \right)=p+\left(1-\frac{|\theta-1|^2}{n}\right)(I-p).
\end{eqnarray}
We need below the following claim which is proved in Section \ref{U_to_DS}.

\begin{Prop}\label{openess_mu}
With $\theta\in\mathbb{T}\setminus\{-1,1\}$ and $n>4$, there exists an open neighborhood $\mathcal{U}$ of $\,^\theta\!U $ such that $\mu(\mathcal{U})$ is open in $G$.
\end{Prop}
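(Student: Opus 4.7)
The plan is to establish openness via the submersion theorem: I will show that the real-linear map $d\mu$ at $u_0 = \,^\theta\!U$ is surjective onto $T_{\mu(u_0)}G$, so that $\mu$ is a submersion at $u_0$ and any sufficiently small open neighborhood $\mathcal{U}$ of $u_0$ has $\mu(\mathcal{U})$ open in $G$. To begin, one checks that $\mu(u_0) \in G$: by \eqref{matrix_U_unistoh} its eigenvalues are $1$ and $1 - |\theta-1|^2/n$, and $|\theta-1|^2 \le 4 < n$ when $n > 4$, so $\mu(u_0)$ is invertible. By continuity $\mu$ then maps a whole neighborhood of $u_0$ into $G$.

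Next I would parameterize $T_{u_0}U(n)$ by $n \times n$ Hermitian matrices $H$ via $v = iu_0H$, in which coordinates
\[
(d\mu)_{ij}(v) = -2\operatorname{Im}\bigl(\overline{(u_0)_{ij}}\,(u_0 H)_{ij}\bigr).
\]
Since $\dim_{\mathbb{R}} T_{u_0}U(n) = n^2$ and $\dim_{\mathbb{R}} T_{\mu(u_0)}G = (n-1)^2$, surjectivity is equivalent to $\dim \ker d\mu = 2n - 1$. The lower bound $\dim \ker d\mu \ge 2n-1$ is automatic from symmetry: the action $(d_1, d_2)\cdot u = d_1 u d_2^*$ of $T^n \times T^n$ on $U(n)$ manifestly preserves $\mu$, and since every entry of $u_0$ is nonzero (values $1+\alpha$ on the diagonal and $\alpha = (\theta - 1)/n$ off, both nonzero when $\theta \ne \pm 1$ and $n > 4$), the stabilizer of $u_0$ reduces to the scalar circle. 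Hence the orbit through $u_0$ is a $(2n-1)$-dimensional submanifold contained in $\ker d\mu$.

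The core step is the matching upper bound $\dim \ker d\mu \le 2n - 1$. Writing $H_{ij} = \eta_{ij} + i\zeta_{ij}$ with $\eta_{ji} = \eta_{ij}$, $\zeta_{ji} = -\zeta_{ij}$, and setting $\alpha = a + ib$, the off-diagonal kernel equations $\operatorname{Im}\bigl(\bar\alpha(u_0H)_{ij}\bigr) = 0$ force $a\zeta_{ij} - b\eta_{ij}$ to depend only on $j$; combined with Hermitian symmetry this gives real parameters $\lambda_1,\dots,\lambda_n$ with $\eta_{ij} = -(\lambda_i + \lambda_j)/(2b)$ and $\zeta_{ij} = (\lambda_j - \lambda_i)/(2a)$ for $i \ne j$. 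Substituting these into the column-sum expressions and invoking the identity $2\operatorname{Re}(\alpha) + n|\alpha|^2 = 0$ (a direct consequence of $u_0 u_0^* = I$), the only remaining constraint on the $\lambda_i$ is $\sum_i \lambda_i = 0$; and the same identity collapses the diagonal conditions $\operatorname{Im}\bigl((1+\bar\alpha)(u_0H)_{ii}\bigr) = 0$ to tautologies, leaving the $n$ real diagonal entries $H_{ii}$ free. Total: $(n-1) + n = 2n - 1$ parameters, as required.

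The main obstacle is precisely this kernel calculation. The identity $2\operatorname{Re}(\alpha) + n|\alpha|^2 = 0$ enters at two crucial junctures and is indispensable: without it the off-diagonal equations would over-determine the $\lambda_i$ and the diagonal equations would add genuine constraints on the $H_{ii}$, collapsing $\dim\ker d\mu$ strictly below $2n - 1$ and thereby preventing $d\mu$ from reaching all of $T_{\mu(u_0)}G$. It is exactly this cancellation, reflecting the unitarity of $u_0 = \,^\theta\!U$, that puts the rank on target and delivers the claimed openness.
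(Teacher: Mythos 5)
Your proposal is correct, and it follows the same overall strategy as the paper: show that $\mu$ is a submersion at $\,^\theta\!U$ by proving $\dim_{\mathbb{R}}\ker {\rm d}\mu_{u_0}=2n-1$, so that the rank $n^2-(2n-1)=(n-1)^2$ matches $\dim T_{\mu(u_0)}G$ and openness follows. The difference lies entirely in how that kernel dimension is obtained. The paper (Section \ref{U_to_DS}, following Karabegov) conjugates the differential into the operator $\mathbf{C}_\theta^{-1}-\mathbf{D}_\theta^{-1}$ on $\mathcal{SH}_n$ and exhibits an explicit orthogonal decomposition $\mathcal{SH}_n=\mathrm{K}_n\oplus\mathrm{B}_n\oplus\mathrm{O}_n\oplus\mathrm{IS}_n\oplus\mathrm{RS}_n$ on whose last four summands the operator acts by nonzero multiples (for $\theta\notin\{-1,1\}$), forcing the kernel to be exactly $\mathrm{K}_n$, the $(2n-1)$-dimensional span of $\mathfrak{D}_n$ and $u\mathfrak{D}_nu^*$. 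You instead get the lower bound from the tangent space to the $T^n\times T^n$-orbit $(d_1,d_2)\cdot u=d_1ud_2^*$ (which is precisely the paper's $\mathrm{K}_n$) and the matching upper bound by solving the linear system $\operatorname{Im}\bigl(\overline{(u_0)_{ij}}(u_0H)_{ij}\bigr)=0$ directly in coordinates; I checked your reduction and it is sound: the off-diagonal equations give $a\zeta_{ij}-b\eta_{ij}=\lambda_j$ with $\lambda_j=-|\alpha|^2\operatorname{Im}(c_j)$, Hermitian symmetry yields the closed formulas for $\eta_{ij},\zeta_{ij}$, the self-consistency with $c_j$ collapses via $2\operatorname{Re}(\alpha)+n|\alpha|^2=0$ to the single relation $\sum_i\lambda_i=0$, and the same identity (through $a+|\alpha|^2=a(n-2)/n$) makes the diagonal equations vacuous, giving $(n-1)+n=2n-1$. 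Your route is more elementary and self-contained (it avoids importing \cite{Karabegov} and the five-subspace bookkeeping, and it makes transparent that the kernel is exactly the torus-orbit direction), at the cost of being tied to the special structure $u_0=I+\alpha J$; the paper's operator-theoretic decomposition is heavier but packages the computation in a form reusable for other unitaries with nonvanishing entries. Your preliminary checks (invertibility and strict positivity of the entries of $\mu(\,^\theta\!U)$ using $n>4$, and nonvanishing of $a$ and $b$ for $\theta\neq\pm1$) are exactly what is needed to justify dividing by $2a$, $2b$ and to identify a neighborhood of $\mu(u_0)$ in $G$ with one in $\mathcal{DS}_n$.
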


\subsection{The commutant of $\,^k\!E\cdot\mathfrak{N}^{\otimes k}\left(\operatorname{Ad}\,U(M)\right)\cdot\,^k\!E$.}

Let us start with observing that, in view of \eqref{Cond_expect_tensor_product}, $\,^k\!E\left(L_0^{\otimes k}\right)=\left(  L_0^\mathfrak{A} \right)^{\otimes k}$. It follows that $\,^k\!E\cdot\mathfrak{N}^{\otimes k}\left(\operatorname{Ad}\,U(M)\right) \left(  L_0^\mathfrak{A} \right)^{\otimes k}\subset \left(  L_0^\mathfrak{A} \right)^{\otimes k}$. Thus we can view $\,^k\!E\cdot\mathfrak{N}^{\otimes k}\left(\operatorname{Ad}\,U(M)\right) \cdot \,^k\!E$ as a family of operators on $\left(  L_0^\mathfrak{A} \right)^{\otimes k}$. Finally, let us restrict the representation $\;^k\!\mathcal{P}$ from \ref{action_of+symmetric_group} of $\mathfrak{S}_k$ to the subspace $\left(L_0^\mathfrak{A} \right)^{\otimes k}$, to be denoted by $\;^k\!\mathcal{P}_0^\mathfrak{A}$.

Let $\mathcal{N}_0$ be the $w^*$-algebra generated by the operators $\,^k\!E\cdot\mathfrak{N}^{\otimes k}\left(\operatorname{Ad}\,U(M)\right)\cdot\,^k\!E$ in $\left(L_0^\mathfrak{A}\right)^{\otimes k}$.

\begin{Prop}\label{commutants_mutually}
$\mathcal{N}_0$ coincides with $\left(\;^k\!\mathcal{P}_0^\mathfrak{A}\left(\mathfrak{S}_k\right)\right)'$.
\end{Prop}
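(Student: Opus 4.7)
The plan is to establish the two mutually inverse inclusions. The inclusion $\mathcal{N}_0\subseteq\bigl(\;^k\!\mathcal{P}_0^\mathfrak{A}(\mathfrak{S}_k)\bigr)'$ is routine: each permutation $\;^k\!\mathcal{P}(s)$ commutes with every diagonal operator $\mathfrak{N}^{\otimes k}(\operatorname{Ad}u)$, and by \eqref{Cond_expect_tensor_product} it also commutes with $\;^k\!E=E^{\otimes k}$; hence each generator of $\mathcal{N}_0$ commutes with $\;^k\!\mathcal{P}_0^\mathfrak{A}(\mathfrak{S}_k)$. Passing to commutants reduces the reverse inclusion to proving $\mathcal{N}_0'\subseteq\;^k\!\mathcal{P}_0^\mathfrak{A}(\mathfrak{S}_k)''$.

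I will prove this by localizing to the increasing filtration
\[H_J=\bigl(L_0^{\mathfrak{A}_{1J}}\bigr)^{\otimes k},\qquad\mathfrak{A}_{1J}=\mathfrak{A}\cap M_{1J},\]
whose union is dense in $\bigl(L_0^\mathfrak{A}\bigr)^{\otimes k}$; let $\pi_J$ denote the orthogonal projection onto $H_J$. Formula \eqref{Operator_unistochastic} shows that, for $u\in U(M_{1J})$, the generator $\;^k\!E\cdot\mathfrak{N}^{\otimes k}(\operatorname{Ad}u)\cdot\;^k\!E$ preserves $H_J$ and acts there as the diagonal tensor power $\bigl((I-p)\mu(u)(I-p)\bigr)^{\otimes k}$, with $n=k^J$ and $p$ the minimal projection of \eqref{form_of_double_stochastic}. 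Via the identification $(I-p)\mathbb{C}^n\simeq L_0^{\mathfrak{A}_{1J}}$, each such operator arises from an element of the group $G\simeq GL(n-1,\mathbb{R})$ (the $(I-p)$-compression being a group isomorphism).

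Now fix $T\in\mathcal{N}_0'$. The compression $\pi_JT\pi_J$ commutes with all of the operators $\bigl((I-p)\mu(u)(I-p)\bigr)^{\otimes k}$, $u\in U(M_{1J})$. For $J$ large enough that $n=k^J>4$, Proposition~\ref{openess_mu} combined with \eqref{matrix_U_unistoh} supplies an open subset of $G$ contained in $\mu(U(M_{1J}))$. Since $g\mapsto g^{\otimes k}$ is polynomial and an open subset of $GL(n-1,\mathbb{R})$ is Zariski-dense in $GL(n-1,\mathbb{C})$, the complex linear span of the resulting diagonal operators coincides with that of $\bigl\{g^{\otimes k}:g\in GL(n-1,\mathbb{C})\bigr\}$ on $H_J$. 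Classical Schur--Weyl duality then identifies its commutant in $B(H_J)$ with $\;^k\!\mathcal{P}_0^\mathfrak{A}(\mathfrak{S}_k)''|_{H_J}$, whence $\pi_JT\pi_J\in\;^k\!\mathcal{P}_0^\mathfrak{A}(\mathfrak{S}_k)''|_{H_J}$.

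To conclude, once $J$ is large enough that every irreducible $\mathfrak{S}_k$-module appears in $H_J$, the compression map $\;^k\!\mathcal{P}_0^\mathfrak{A}(\mathfrak{S}_k)''\to\;^k\!\mathcal{P}_0^\mathfrak{A}(\mathfrak{S}_k)''|_{H_J}$ is a $*$-isomorphism of finite-dimensional algebras; let $R_J\in\;^k\!\mathcal{P}_0^\mathfrak{A}(\mathfrak{S}_k)''$ be the unique preimage of $\pi_JT\pi_J$. The consistency $\pi_{J_0}R_J\pi_{J_0}=\pi_{J_0}T\pi_{J_0}$ valid for all $J\ge J_0$ forces $R_J$ to stabilize at some $R_\infty$, and then $\pi_JR_\infty\pi_J=\pi_JT\pi_J$ converges strongly both to $T$ and to $R_\infty$, yielding $T=R_\infty\in\;^k\!\mathcal{P}_0^\mathfrak{A}(\mathfrak{S}_k)''$. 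The principal obstacle is the Schur--Weyl step: one must upgrade a real open set of unistochastic matrices $\mu(U(M_{1J}))$ into the full action of $GL(n-1,\mathbb{C})$ on the $k$-th tensor power. This is exactly what Proposition~\ref{openess_mu} makes possible, combined with the polynomial density argument that turns a real open neighborhood into the complex Zariski closure.
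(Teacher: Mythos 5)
Your argument is correct and follows essentially the same route as the paper's proof: compress to the finite-dimensional subspaces $\left(M_{1J}\cap L_0^{\mathfrak{A}}\right)^{\otimes k}$, identify the compressed generators with tensor powers of unistochastic matrices acting through $\,^I\!GL(n,\mathbb{R})\cong GL(n-1,\mathbb{R})$, invoke Proposition~\ref{openess_mu} together with classical Schur--Weyl duality, and pass to the limit using faithfulness of the compression of the finite-dimensional algebra generated by $\;^k\!\mathcal{P}_0^\mathfrak{A}(\mathfrak{S}_k)$ for large $J$. The only local divergence is that you upgrade the open set $\mu(\mathcal{U})$ to the full $GL(n-1,\mathbb{C})$-span by a Zariski-density (polynomial-identity) argument, whereas the paper forms the identity neighborhood $\mu(\mathcal{U})\cdot\mu(\mathcal{U})^{-1}$ and passes to the Lie algebra representation; both are valid, and the only unstated (but easily checked) point in your version is that the projection $\pi_J$ commutes with the compressed generators, which the paper secures via Lemma~\ref{cond_expect_finite_factor} and \eqref{F_J_zero_in__N_0}.
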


We need an auxiliary

\begin{Lm}\label{cond_expect_finite_factor}
Let $\,^k\!\mathfrak{E}_J^p$ $\left( p<J \right)$ be the conditional expectation of $M^{\otimes k}$ onto the ${\rm I}_N$-subfactor $M_{pJ}^{\otimes k}=\left(\left(\left\{ N_l
\right\}_{l=p}^{J}\right)^{\prime\prime}\right)^{\otimes k}$ with respect to ${\rm tr}^{\otimes k}$, where $N=k^{J-p+1}$. Then $\,^k\!\mathfrak{E}_J^p$ belongs to the $w^*$-algebra generated by $\mathfrak{N}^{\otimes k}\left( {\rm Ad}\,u \right)$ with $u$ spanning the unitary group of $w^*$-algebra $\mathfrak{N}\left\{   N_1N_2\cdots N_{p-1}N_{J+1}N_{J+2}\cdots\right\}^{\prime\prime}$.
\end{Lm}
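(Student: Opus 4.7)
The plan is to mirror the proof strategy of Lemma~\ref{approximation_CE} and Proposition~\ref{limits_of_E_J}, adapted to target the conditional expectation onto $M_{pJ}^{\otimes k}$ rather than onto the MASA. I would first observe that since $M_{pJ}$ is a finite type-$\mathrm{I}_N$ subfactor of the $\mathrm{II}_1$-factor $M$, we have a tensor splitting $M = M_{pJ}\otimes\tilde M$ with $\tilde M := M_{pJ}'\cap M = \{N_l: l<p\text{ or }l>J\}''$ an AFD $\mathrm{II}_1$-factor. Under this splitting, $\mathfrak{E}_J^p = \mathrm{id}_{M_{pJ}}\otimes\tau_{\tilde M}$, where $\tau_{\tilde M}\colon L^2(\tilde M)\to\mathbb{C}I$ is the projection onto constants, and by uniqueness of the conditional expectation $\,^k\!\mathfrak{E}_J^p = (\mathfrak{E}_J^p)^{\otimes k}$, exactly as in \eqref{Cond_expect_tensor_product}.

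The key observation is that $\mathfrak{N}^{\otimes k}(\mathrm{Ad}\,u) = \mathfrak{N}(\mathrm{Ad}\,u)^{\otimes k}$ for each single $u$. Since tensor powers of a uniformly bounded weakly convergent sequence of operators remain weakly convergent (checking first on elementary tensors and extending by density and uniform boundedness), it suffices to exhibit a sequence $V_n\in U(\tilde M)$ with $\mathrm{Ad}\,V_n\to\mathfrak{E}_J^p$ in the weak operator topology on $L^2(M,\mathrm{tr})$: then $\mathfrak{N}^{\otimes k}(\mathrm{Ad}\,V_n) = (\mathrm{Ad}\,V_n)^{\otimes k}\to(\mathfrak{E}_J^p)^{\otimes k} = \,^k\!\mathfrak{E}_J^p$ weakly, placing $\,^k\!\mathfrak{E}_J^p$ in the desired $w^*$-algebra. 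Since $V_n\in\tilde M$ commutes with $M_{pJ}$, we have $\mathrm{Ad}\,V_n = \mathrm{id}_{M_{pJ}}\otimes\mathrm{Ad}\,V_n|_{\tilde M}$, so the task reduces to producing $V_n\in U(\tilde M)$ with $\mathrm{Ad}\,V_n|_{\tilde M}\to\tau_{\tilde M}$ weakly on $L^2(\tilde M)$.

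I would build such $V_n$ directly from the matrix units $\{\,^l\!e_{ij}: l\notin[p,J]\}$ of the generating subfactors of $\tilde M$. Letting $\tilde{\mathfrak{A}}_n$ denote the finite-dimensional MASA in $\tilde M$ spanned by the diagonal projections of the first $n$ of these subfactors, the construction of Lemma~\ref{approximation_CE} applied within $\tilde M$ produces unitaries of the form $\sum_{\mathbf{i}}\tilde{\mathfrak{e}}_{\mathbf{i}}\cdot\,^{\varphi_n(\mathbf{i})}\!u$ whose adjoint action approximates the conditional expectation onto $\tilde{\mathfrak{A}}_n$. I would then package into each such unitary an additional long cyclic permutation of the minimal projections of $\tilde{\mathfrak{A}}_n$, drawn from the normalizer $\mathcal{N}(\tilde{\mathfrak{A}}_n)\subset U(\tilde M)$, chosen so that the weak limit of $\mathrm{Ad}\,V_n$ becomes $\tau_{\tilde M}$ rather than merely the MASA conditional expectation.

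The hard part is this ``single sequence'' requirement, caused by the fact that $\mathfrak{N}^{\otimes k}$ does not intertwine with convex combinations: $\mathfrak{N}^{\otimes k}(\sum_i\lambda_i\mathrm{Ad}\,u_i) = \sum_i\lambda_i(\mathrm{Ad}\,u_i)^{\otimes k}$ is in general \emph{not} equal to $(\sum_i\lambda_i\mathrm{Ad}\,u_i)^{\otimes k}$. This forbids any step in which one would average $\mathrm{Ad}\,u$ over a finite subgroup of $\mathcal{N}(\tilde{\mathfrak{A}}_n)$ to collapse the MASA to its trace; all scrambling must instead be packaged into a single unitary at each stage. The infinite-tensor-product structure of $\tilde M$ provides enough independent subfactors beyond any given finite stage to accommodate both the block-decomposition of Lemma~\ref{approximation_CE} and the further permutation of $\tilde{\mathfrak{A}}_n$ within one $V_n$, so that the diagonal tensor powers $(\mathrm{Ad}\,V_n)^{\otimes k}$ converge weakly to $\,^k\!\mathfrak{E}_J^p$.
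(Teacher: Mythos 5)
Your overall architecture coincides with the paper's: both proofs rest on (i) the factorization $\,^k\!\mathfrak{E}_J^p=(\mathfrak{E}_J^p)^{\otimes k}$ obtained from uniqueness of the trace-preserving conditional expectation (this is exactly \eqref{Cond_expect_p_J}), and (ii) the observation that it therefore suffices to produce a \emph{single} sequence of unitaries $U_n$ in the relative commutant $M_{pJ}'\cap M=\left\{N_1\cdots N_{p-1}N_{J+1}\cdots\right\}''$ with $\operatorname{Ad}U_n\to\mathfrak{E}_J^p$ in the weak operator topology, since then $\mathfrak{N}^{\otimes k}(\operatorname{Ad}U_n)=(\operatorname{Ad}U_n)^{\otimes k}\to(\mathfrak{E}_J^p)^{\otimes k}$ on elementary tensors. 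Your explicit remark that averaging over a finite subgroup is forbidden because $\mathfrak{N}^{\otimes k}$ does not respect convex combinations is precisely the constraint the paper's proof is designed around.

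Where you diverge is in the construction of the unitaries, and this is where your sketch has a soft spot. You propose $V_n=C_nW_n$ with $W_n$ a Lemma~\ref{approximation_CE}-type unitary (so $\operatorname{Ad}W_n$ converges weakly to the MASA conditional expectation) and $C_n$ a long cycle in the normalizer of the finite MASA $\tilde{\mathfrak{A}}_n$. Two points need justification. First, weak convergence is not stable under conjugation by a \emph{varying} sequence of unitaries, so one cannot simply compose the two weak limits; one must return to the explicit off-diagonal error terms of Lemma~\ref{approximation_CE} and check that conjugation by $C_n$ (supported in the first $n$ blocks, hence commuting with the far-away factors carrying those error terms) preserves their weak vanishing --- this works, but is not automatic. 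Second, and more seriously, for a fixed diagonal element $a\in\tilde{\mathfrak{A}}_L$ the convergence $C_naC_n^*\to\operatorname{tr}(a)I$ in the weak operator topology is \emph{not} a consequence of $C_n$ being ``a long cyclic permutation'': it requires that the cycles equidistribute the minimal projections of $\tilde{\mathfrak{A}}_L$ against every fixed finite block, a genuine combinatorial condition that a generic long cycle need not satisfy. The paper sidesteps both issues with the one-step construction \eqref{U_l}: it chooses $U_l\in M_{pJ}'\cap M$ conjugating the whole finite block $M_{pJ}'\cap M_{1J_l}$ into a factor $M_{j_{l+1}J_{l+1}}$ that is trace-independent of any fixed finite block, whence $\operatorname{Ad}U_n(x)\to\operatorname{tr}(x)I$ weakly for all $x\in M_{pJ}'\cap M$ directly, with no intermediate passage through a MASA. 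If you make the equidistribution property of your cycles explicit --- in effect choosing them to shift $\tilde{\mathfrak{A}}_L$ far away --- your construction collapses to the paper's.
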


\begin{proof}
Notice first that
\begin{eqnarray}\label{equality_relative_commutant}
M_{pJ}^\prime\cap M=\left\{   N_1N_2\cdots N_{p-1}N_{J+1}N_{J+2}\cdots\right\}^{\prime\prime}.
\end{eqnarray}
Every $x\in M$ can be written in the form $x=\sum\limits_{r,q=1}^N a_{rq}\,x^\prime_{rq}$, where $a_{rq}\in M_{pJ}$, $x^\prime_{rq}\in M_{pJ}^\prime$. Set $\mathfrak{E}_J^p(x)=\sum\limits_{r,q=1}^N{\rm tr}\left(x^\prime_{rq}  \right)\,a_{rq}$. The uniqueness of conditional expectations implies
\begin{eqnarray}\label{Cond_expect_p_J}
\,^k\!\mathfrak{E}_J^p\left( \,^1\!x\otimes\,^2\!x \otimes\cdots\otimes\,^k\!x  \right)= \mathfrak{E}_J^p(\,^1\!x)\otimes\mathfrak{E}_J^p(\,^2\!x) \otimes\cdots\otimes\mathfrak{E}_J^p(\,^k\!x)
\end{eqnarray}
for any $\,^1\!x,\,^2\!x,\ldots,\,^k\!x\in M$. Let $\left\{  j_l \right\}$ and  $\left\{  J_l \right\}$ be two increasing sequences of positive integers with the property
\begin{eqnarray}
J_{l+1}-j_{l+1}>\max\{J_l,J\} \text{\ for all\ }l.
\end{eqnarray}
By (\ref{equality_relative_commutant}), there exists a sequence $\left\{U_l \right\}$ of unitaries from $M_{pJ}^\prime\cap M$ such that
\begin{eqnarray}\label{U_l}
U_{l}\in M_{pJ}^\prime\cap M_{1J_{l+1}}\text{\ and\ } \operatorname{Ad}U_l\left(M_{pJ}'\cap M_{1J_{l}}\right)\subset M_{j_{l+1}\,J_{l+1}}.
\end{eqnarray}
Therefore, $$\text{w-}\!\!\!\lim\limits_{n\to\infty}\operatorname{Ad}U_n(x)=
\operatorname{tr}(x)I~\text{ for each}~ x\in\bigcup\limits_{r=1}^\infty M_{1r}\cap M_{pJ}^\prime,$$ where $\text{w-}\!\!\!\lim\limits_{n\to\infty}x_n$ denote the limit of the sequence $x_n\in M$ in the weak operator topology. Since $\bigcup\limits_{r=1}^\infty M_{1r}$ is dense in $M$ with respect to the strong operator topology, one has
\begin{eqnarray*}
\text{w-}\!\!\!\lim\limits_{n\to\infty}\operatorname{Ad}U_n(x)=
\operatorname{tr}(x)I\text{\ for each\ }x\in M_{pJ}'\cap M.
\end{eqnarray*}
Now, in view of the above observations, with $x=\sum\limits_{r,q=1}^N a_{pq}\,x'_{rq}\in M$, $a_{rq}\in M_{pJ}$,  $x'_{rq}\in M_{pJ}'\cap M$, one establishes that
\begin{eqnarray*}
\text{w-}\!\!\!\lim\limits_{n\to\infty}\operatorname{Ad}U_n(x)=
\sum\limits_{r,q=1}^N\operatorname{tr}\left(x'_{rq}\right)a_{rq}=
\mathfrak{E}_J^p(x)\in M_{pJ}.
\end{eqnarray*}
Hence
$$
\text{w-}\!\!\!\lim\limits_{n\to\infty}\mathfrak{N}^{\otimes k}\left(\operatorname{Ad}U_n\right)\left(\,^1\!x\otimes\,^2\!x \otimes\cdots\otimes\,^k\!x\right)=
\mathfrak{E}_J^p\left(\,^1\!x\right)\otimes
\mathfrak{E}_J^p\left(\,^2\!x\right)\otimes\cdots\otimes
\mathfrak{E}_J^p\left(\,^k\!x\right).
$$
Now combine the latter with (\ref{Cond_expect_p_J}) and (\ref{U_l}) to establish the claim of Lemma \ref{cond_expect_finite_factor}.
\end{proof}

\begin{proof}[\bf Proof of Proposition \ref{commutants_mutually}]
Note first that the conditional expectations $\,^k\!E$ and $\,^k\!\mathfrak{E}_J^p$ commute and
\begin{eqnarray}\label{F_J approx  E}
\lim\limits_{J\to\infty}\,^k\!\mathfrak{E}_J^1\circ\;^k\!E=\;^k\!E.
\end{eqnarray}
To simplify the notation, we substitute below $F_J$ for $\,^k\!\mathfrak{E}_J^1\circ\;^k\!E$. The projection $F_J$ is just the conditional expectation of $M^{\otimes k}$ onto $\mathfrak{A}^{\otimes k}\cap M_{1J}^{\otimes k}$ with respect to ${\rm tr}^{\otimes k}$. Since $\,^k\!E \,\left(L_0^{\otimes k}\right)\subset \left( L_0^\mathfrak{A} \right)^{\otimes k}$ and $\,^k\!\mathfrak{E}_J^1 \,\left(L_0^{\otimes k}\right)=L_0^{\otimes k}\cap M_{1J}^{\otimes k}$, one deduces that
\begin{eqnarray}\label{zero_invariance}
F_J\left( L_0^{\otimes k}\right)\subset M_{1J}^{\otimes k}\cap\left( L_0^\mathfrak{A}\right)^{\otimes k}=\left(M_{1J}\cap L_0^\mathfrak{A}\right)^{\otimes k}.
\end{eqnarray}
By Proposition \ref{limits_of_E_J} and Lemma \ref{cond_expect_finite_factor},
\begin{eqnarray}\label{F_J_in_bicommutant}
F_J\in\left(\mathfrak{N}^{\otimes k}\left({\rm Ad}\, U(M) \right)\right)^{\prime\prime}.
\end{eqnarray}
We are about to use the notation $T_J(u)$ for the operator $F_J\cdot\mathfrak{N}^{\otimes k}(\operatorname{Ad}u)\cdot F_J$.
It follows from (\ref{zero_invariance}) that
\begin{eqnarray}
T_J(u)\left( M_{1J}^{\otimes k}\cap  \left(L_0^\mathfrak{A}\right)^{\otimes k} \right)\subset M_{1J}^{\otimes k}\cap  \left(L_0^\mathfrak{A}\right)^{\otimes k}\;\text{ for each unitary } u\in M_{1J}.
\end{eqnarray}
The above observations imply that the action of $T_J(u)$ on $M_{1J}^{\otimes k}\cap \left(L_0^\mathfrak{A}\right)^{\otimes k}$ is determined by (\ref{Operator_unistochastic}).

Denote by $\mathfrak{L}$ an auxiliary representation of the general linear group $GL(n,\mathbb{R})$, with $n=k^J=|\mathfrak{I}_J|$, which coincides with the natural action of $GL(n,\mathbb{R})$ on the complex $n$-dimensional space $M_{1J}\cap \mathfrak{A}$; more precisely, with $g=\left[g_{{\mathbf{i}_J\,\mathbf{i}_J'}}\right]_{\mathbf{i}_J\,\mathbf{i}_J'
\in\mathfrak{I}_J}\in GL(n,\mathbb{R}) $ one has
\begin{eqnarray}\label{L(g)}
\mathfrak{L}(g)\left( \sum\limits_{\mathfrak{i}_J\in\mathfrak{I}_J}\;a_{\mathbf{i}_J}\cdot
\mathfrak{e}_{\mathbf{i}_J\,\mathbf{i}_J} \right)=\sum\limits_{\mathfrak{i}_J\in\mathfrak{I}_J} \sum\limits_{\mathfrak{i}^\prime_J\in\mathfrak{I}_J}\;g_{{\mathbf{i}_J\,\mathbf{i}_J^\prime}}\;a_{\mathbf{i}^\prime_J}\cdot
\mathfrak{e}_{\mathbf{i}_J\,\mathbf{i}_J}.
\end{eqnarray}
Let us introduce the subgroup $\,^I\!GL(n,\mathbb{R})$ formed by such $g\in GL(n,\mathbb{R})$ that $\mathfrak{L}(g){\rm I}={\rm I}$ and $\mathfrak{L}(g^t){\rm I}={\rm I}$, where the vector $\mathrm{I}=
\sum\limits_{\mathfrak{i}_J\in
\mathfrak{I}_J}\mathfrak{e}_{\mathbf{i}_J\,\mathbf{i}_J}$ is just the unit of the algebra $M_{1J}\cap \mathfrak{A}$, and the superscript $t$ stands for passage to the transpose. Given a unitary $u=\sum\limits_{\mathfrak{i}_J,\mathfrak{i}^\prime_J\in\mathfrak{I}_J}\;
u_{\mathbf{i}_J\,\mathbf{i}_J^\prime}\cdot\mathfrak{e}_{\mathbf{i}_J\,\mathbf{i}_J^\prime}\;\in M_{1J}$, the matrix $\mu(u)=\left[\left| u_{\mathbf{i}_J\,\mathbf{i}_J^\prime} \right|^2\right]$ is doubly stochastic. In the case $\mu(u)$ is also invertible one easily deduces from (\ref{L(g)}) that $\mu(u)\in \,^I\!GL(n,\mathbb{R})$, and in view of (\ref{Operator_unistochastic}) one has
\begin{eqnarray}\label{equality_T_L}
T_J(u)=\mathfrak{L}(\mu(u)).
\end{eqnarray}
 $\,^I\!GL(n,\mathbb{R})$ is the intersection of stationary subgroups of a vector ${\rm I}$  with respect to the left action $g\mapsto\mathfrak{L}(g)$ and to the right action $g\mapsto\mathfrak{L}(g^t)$ on $M_{1J}\cap\mathfrak{A}$. Hence it is isomorphic to $GL(n-1,\mathbb{R})$, and
\begin{eqnarray}\label{GL_invariance}
\mathfrak{L}(g)\left(M_{1J}\cap L_0^\mathfrak{A}\right)= M_{1J}\cap L_0^\mathfrak{A} \;\text{ for all } g\in \,^I\!GL(n,\mathbb{R}).
\end{eqnarray}
By (\ref{equality_T_L}) and (\ref{GL_invariance}), the restrictions   $T_J^0(u)$ and $\mathfrak{L}_0(g)$  of $T_J(u)$  and $\mathfrak{L}(g)$, respectively, to $M_{1J}\cap L_0^\mathfrak{A}$ are well defined. We are about to prove that
\begin{eqnarray}\label{bicommutants_T_L}
\left\{ T_J^0(u), \; u\in M_{1J}\cap U(M)\right\}^{\prime\prime}=\left\{ \mathfrak{L}_0^{\otimes k}\left( \,^I\!GL(n,\mathbb{R}) \right)  \right\}^{\prime\prime}.
\end{eqnarray}
Once the latter relation is established, an application of the well known results of classical Schur-Weyl duality (see, for example, \cite{Fulton-Harris}, Lecture 6) allows one to obtain
\begin{eqnarray*}
\left\{ \mathfrak{L}_0^{\otimes k}\left( \,^I\!GL(n,\mathbb{R}) \right)  \right\}''=\left\{ F_J^0\;\;^k\!\mathcal{P}^\mathfrak{A}\left( \mathfrak{S}_k \right)\,F_J^0  \right\}^\prime,
\end{eqnarray*}
and then to deduce that
\begin{eqnarray}\label{commutant_symmetrical_gr}
\left\{ T_J^0(u), \; u\in M_{1J}\cap U(M)\right\}^{\prime\prime}=\left\{ F_J^0\;\;^k\!\mathcal{P}^\mathfrak{A}\left( \mathfrak{S}_k \right)\,F_J^0  \right\}^\prime,
\end{eqnarray}
where $F_J^0$ is the restriction of $F_J$ to $L_0^{\otimes k}$ (see (\ref{zero_invariance})).

Now we turn to proving (\ref{bicommutants_T_L}).

Since, in view of $\left(\mathfrak{N}^{\otimes k}\left({\rm Ad}\, U(M) \right)\right)^{\prime\prime}\subset\left(\;^k\!\mathcal{P}\left( \mathfrak{S}_k \right)  \right)^\prime$ and (\ref{F_J_in_bicommutant}) one has $F_J$ $\in\left(\mathfrak{N}^{\otimes k}\left({\rm Ad}\, U(M) \right)\right)^{\prime\prime}$, it follows that
\begin{eqnarray}\label{F_J_zero_in__N_0}
\,F_J^0 \in \mathcal{N}_0\subset\left( \;^k\!\mathcal{P}^\mathfrak{A}\left( \mathfrak{S}_k \right) \right)^\prime.
\end{eqnarray}
This implies that for each $J$ the operators $F_J^0\;\;^k\!\mathcal{P}^\mathfrak{A}\left( \mathfrak{S}_k \right)\,F_J^0$ determine a unitary representation of $\mathfrak{S}_k$.

One concludes from Proposition \ref{openess_mu} that there exists an open neighborhood $\mathcal{U}$ $\in U(n)$ of $\,^\theta\! U$ such that $\mu(\mathcal{U})$ is an open subset in $\,^I\!GL(n,\mathbb{R})\cong GL(n-1,\mathbb{R})$. Hence, an application of (\ref{equality_T_L}) yields
$$
T_J^0(\mathcal{U})=\mathfrak{L}_0^{\otimes k}(\mu(\mathcal{U}))\subset\left\{\left.T_J^0(u)\right|\;u\in M_{1J}\cap U(M)\right\}''.
$$
Therefore, with $\mathcal{U}\cdot\mathcal{U}^{-1}$ being a neighborhood of the identity in $U(n)$,
\begin{eqnarray}\label{belonging}
 \mathfrak{L}_0^{\otimes k}\left(\mu(\mathcal{U})\cdot\mu(\mathcal{U})^{-1}\right)
\ \subset\left\{\left.T_J^0(u)\right|\;u\in M_{1J}\cap U(M)\right\}''.
\end{eqnarray}
Denote by $\,^I\!\mathfrak{gl}(n,\mathbb{R})$ and $\mathfrak{gl}(n-1,\mathbb{R})$ the Lie algebras of $\,^I\!GL(n,\mathbb{R})$ and $GL(n-1,\mathbb{R})$, respectively.

A representation $\mathfrak{L}_0^{\otimes k}$ restricted to the neighborhood $\mu(\mathcal{U})\cdot\mu(\mathcal{U})^{-1}$ of unit in $\,^I\!GL(n,\mathbb{R})\cong GL(n-1,\mathbb{R})$ determines a representation $\mathfrak{l}_0^{\otimes k}$ of Lie algebra $\,^I\!\mathfrak{gl}(n,\mathbb{R})\cong \mathfrak{gl}(n-1,\mathbb{R})$ in the $(n-1)^k$-dimensional vector space $M_{1J}^{\otimes k}\cap \left(L_0^\mathfrak{A}\right)^{\otimes k}$. By (\ref{belonging}),
  \begin{eqnarray*}
   \mathfrak{l}_0^{\otimes k}\left(\,^I\!\mathfrak{gl}(n,\mathbb{R}) \right)\subset \left\{ T_J^0(u), \; u\in M_{1J}\cap U(M)\right\}^{\prime\prime}.
  \end{eqnarray*}
This implies (\ref{bicommutants_T_L}).

Consider a bounded operator $B'\in \mathcal{N}_0'$ together with its action on $\left(L_0^\mathfrak{A}\right)^{\otimes k}$. It follows from (\ref{F_J_zero_in__N_0}) that $F^0_JB'=B'F^0_J$. Therefore $B'_J\stackrel{\mathrm{def}}{=}F^0_JB'F^0_J$ belongs to $\left\{\left.T_J^0(u)\right|\;u\in M_{1J}\cap U(M)\right\}'$. Let $R_\lambda$, $\lambda\in\Upsilon_k$, be an irreducible representation of $\mathfrak{S}_k$ and $\chi_\lambda$ its character. Then the operator $P_0^\lambda=\frac{\dim\lambda}{k!}\sum\limits_{s\in\mathfrak{S}_k}
\chi_\lambda(s)\mathcal{P}^\mathfrak{A}_k(s)$ is an orthogonal projection that belongs to the center of $\left(\;^k\!\mathcal{P}^\mathfrak{A}\left(\mathfrak{S}_k\right)\right)'$.

One can readily find such positive integer $N$ that for all $J>N$ one has $F_JP_0^\lambda\ne 0$. Only such $J$ are to be considered below.

It is clear that $P_0^\lambda\in \mathcal{N}_0'$. In view of (\ref{commutant_symmetrical_gr}),
\begin{equation}\label{finite_commutant}
\begin{split}
&B_J'=\sum\limits_{g\in\mathfrak{S}_k}c_J(g)\;F_J^0\;\;^k\!\mathcal{P}^\mathfrak{A}
\left(g\right)\,F_J^0,\text{\ where\ }c_J(g)\in\mathbb{C},\;\text{\ and\ }\;
\\ & P_0^\lambda\,B_J'=B_J'\,P_0^\lambda\text{\ for all sufficiently large\ } \;J.
\end{split}
\end{equation}
It also follows from (\ref{commutant_symmetrical_gr}) that
$$
(F_J^0\mathcal{N}_0F_J^0)'= F_J^0\left\{\;^k\!\mathcal{P}^\mathfrak{A}\left(\mathfrak{S}_k\right)
\,\right\}''F_J^0.
$$
Hence, since $P_0^\lambda$, which is central in $\left(\;^k\!\mathcal{P}^\mathfrak{A}\left(\mathfrak{S}_k\right)\right)'$ and commutes with $F_J^0\in \mathcal{N}_0$, one has
$$
\left(P_0^\lambda\,F_J^0\,\mathcal{N}_0\,\,F_J^0P_0^\lambda\right)'=
F_J^0P_0^\lambda\left\{\;^k\!\mathcal{P}^\mathfrak{A}\left(\mathfrak{S}_k\right)
\,\right\}''P_0^\lambda\,F_J^0.
$$
Therefore, $\left(P_0^\lambda\,F_J^0\,\mathcal{N}_0\,P_0^\lambda\,F_J^0\right)^\prime$ is a finite ${\rm I}_{{\rm dim}\,\lambda}$-factor for all $J$ large enough. This implies that the map
$$
F_{\widehat{J}}^0P_0^\lambda\left\{\;^k\!\mathcal{P}^\mathfrak{A}
\left(\mathfrak{S}_k\right)\,\right\}''P_0^\lambda\,F_{\widehat{J}}^0\ni A\mapsto F_J^0\,A\,F_J^0\in F_J^0P_0^\lambda\left\{\;^k\!\mathcal{P}^\mathfrak{A}
\left(\mathfrak{S}_k\right)\,\right\}''P_0^\lambda\,F_J^0
$$
is an isomorphism for $\widehat{J}>N$. Hence an application of (\ref{finite_commutant}) yields
  \begin{eqnarray*}
  P_0^\lambda\,B_{\widehat{J}}^\prime=P_0^\lambda\sum\limits_{g\in\mathfrak{S}_k}c_J(g)\;F_{\widehat{J}}^0\;\;^k\!\mathcal{P}^\mathfrak{A}\left( g \right)\,F_{\widehat{J}}^0.
  \end{eqnarray*}
  Now, using (\ref{F_J approx  E}), after the passage to the limit $\widehat{J}\to\infty$ we obtain
  \begin{eqnarray*}
  P_0^\lambda\,B^\prime=P_0^\lambda\sum\limits_{g\in\mathfrak{S}_k}c_J(g)\;
  \;^k\!\mathcal{P}^\mathfrak{A}\left( g \right) \text{ for all } \lambda\in \Upsilon_k.
  \end{eqnarray*}
  Therefore, $B^\prime=\sum\limits_{g\in\mathfrak{S}_k}c_J(g)\;
  \;^k\!\mathcal{P}^\mathfrak{A}\left( g \right)\in \left(\;^k\!\mathcal{P}^\mathfrak{A}\left( \mathfrak{S}_k\right)\right)^{\prime\prime}$, which completes the proof of proposition \ref{commutants_mutually}.
\end{proof}

\subsection{The cyclicity of $\mathfrak{N}^{\otimes k}(\operatorname{Inn}\,M)\left(\left( L_0^\mathfrak{A}\right)^{\otimes k} \right)$ in $L_0^{\otimes k}$.}\label{cyclicsty}

Denote by $\mathcal{H}$ the closure of the linear span of $\mathfrak{N}^{\otimes k}({\rm Inn}\,M)\left(\left(  L_0^\mathfrak{A} \right)^{\otimes k} \right)$ in $L_0^{\otimes k}$. Our claim to be proved below is that $\mathcal{H}$ coincides with $L_0^{\otimes k}$.

Let us keep the notation $\left\{N_l\right\}_{l=1}^\infty$ introduced at the beginning of Section \ref{pr_th_2}; let also $\left\{\,^n\!e_{ij}\right\}_{i,j=1}^k\subset N_n$ stand for the collection of matrix units of $N_n$. Denote by $\,^n\!p^s_1$, $s\in\mathfrak{S}_k$, the projection
$$
\;^k\!\mathcal{P}(s)\left( \,^n\!e_{11}\otimes \,^n\!e_{22}\otimes \ldots \otimes \,^n\!e_{kk}\right)\in M^{\otimes k}\subset L^2(M^{\otimes k},{\rm tr}^{\otimes k}).
$$
Set $\,^n\!E_1=\sum\limits_{s\in\mathfrak{S}_k}\,^n\!p^s_1$ and $\,^n\!p^s_2=\left({\rm I}- \,^n\!E_1 \right)\cdot\,^{(n+1)}\!p^s_1$. Proceed with this construction by introducing $\,^n\!p^s_{i+1}=\left({\rm I}- \,^n\!E_i  \right)\cdot\,^{(n+i)}\!p^s_i$ and $\,^n\!E_{i+1}=\,^n\!E_i+ \sum\limits_{s\in\mathfrak{S}_k}\,^n\!p^s_{i+1}$. It is clear that the projections $\,^n\!p^s_m$ are pairwise orthogonal. Introduce
$$
\,^n\!E_m=\sum\limits_{j=1}^m\sum\limits_{s\in\mathfrak{S}_k}\,^n\!p^{s}_j,
$$
and $\tau_i=\mathrm{tr}^{\otimes k}\left(\,^n\!E_i\right)$, which is certainly an increasing sequence. One can readily compute that $\tau_{i+1}=\tau_i+(1-\tau_i)\frac{k!}{k^k}$, whence
$$
\lim\limits_{i\to\infty}\mathrm{tr}^{\otimes k}\left(\,^n\!E_i\right)=1.
$$
This implies
\begin{equation}\label{full_system_orthoprojections}
\sum\limits_{j=1}^\infty\sum\limits_{s\in\mathfrak{S}_k}\,^n\!p^{s}_j=I.
\end{equation}
due to faithfulness of the trace $\mathrm{tr}^{\otimes k}$.

\begin{Lm}\label{cyclic_lemma}
Let $A_1,A_2,\ldots, A_k$ be a family of selfadjoint operators in $M_{1J}$. Set $A=A_1\otimes A_2\otimes\cdots\otimes A_k$. Then for any pair of positive integers $m,n$ with $n>J$, and any $s\in\mathfrak{S}_k$ there exists a unitary $U\in M$ such that $\operatorname{Ad}U\left(A\,^n\!p^{s}_m\right)\in\mathfrak{A}^{\otimes k}$.
\end{Lm}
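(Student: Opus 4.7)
The plan is to expand $A\,^n\!p^s_m$ as a sum of elementary tensors and find a single commutative $*$-subalgebra of $M$ containing all the tensor factors, then unitarily conjugate that subalgebra into $\mathfrak{A}$. Since $^n\!p^s_m \in \bigl(N_n N_{n+1}\cdots N_{n+m-1}\bigr)^{\otimes k}$ commutes with $A\in M_{1J}^{\otimes k}$ (as $n>J$), $A\,^n\!p^s_m$ is self-adjoint, and the diagonal action $\operatorname{Ad}U$ sends it into $\mathfrak{A}^{\otimes k}$ iff the conjugate of each ``slot'' lies in $\mathfrak{A}$.

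First I would obtain the closed formula
\[
\,^n\!p^s_m=\prod_{j=0}^{m-2}\bigl(I-\,^{n+j}\!E_1\bigr)\cdot\,^{n+m-1}\!p^s_1
\]
by induction on $m$, using $I-\,^n\!E_{i+1}=(I-\,^n\!E_i)(I-\,^{n+i}\!E_1)$, which follows at once from the recursive definition. Writing $I-\,^{n+j}\!E_1=\sum_{\mathbf{i}\text{ not distinct}}\bigotimes_{i=1}^k\,^{n+j}\!e_{i_ii_i}$ and $\,^{n+m-1}\!p^s_1=\bigotimes_{i=1}^k f_i^s$, where $f_i^s=\,^{n+m-1}\!e_{s^{-1}(i),s^{-1}(i)}$, I would expand
\[
A\,^n\!p^s_m=\sum_{\alpha}\bigotimes_{i=1}^k B_{i,\alpha},\qquad B_{i,\alpha}=A_i\cdot\prod_{j=0}^{m-2}\,^{n+j}\!e_{i^{(j)}_i,i^{(j)}_i}\cdot f_i^s,
\]
where $\alpha$ ranges over the admissible families of multi-indices.

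Next I would check that the $*$-algebra $\mathcal{C}\subset M$ generated by all the $B_{i,\alpha}$ is commutative. For $i\neq i'$, the factors $f_i^s$ and $f_{i'}^s$ are orthogonal minimal projections of $N_{n+m-1}$, so $B_{i,\alpha}B_{i',\alpha'}=0=B_{i',\alpha'}B_{i,\alpha}$ (the other diagonal matrix units commute, and the $A_i$'s commute with everything in $N_{n+j}$, $j\geq 0$). For $i=i'$ the elements involve the same $A_i$ and mutually commuting diagonal matrix units, so they commute. Thus $\mathcal{C}=\bigoplus_{i=1}^k\mathcal{C}_i$, with each $\mathcal{C}_i$ a finite-dimensional commutative $*$-subalgebra of the corner $f_i^s M f_i^s$.

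Finally I would construct $U$ slot by slot. Since $f_i^s\in\mathfrak{A}$ and $\mathfrak{A}f_i^s$ is a masa in the $\mathrm{II}_1$-factor $f_i^s M f_i^s$ that is rich in projections of $k$-adic trace, and since the minimal projections of $\mathcal{C}_i$ have traces of the form $r/k^{J+m-1}$ (coming from spectral projections of $A_i\in M_{1J}\cong M_{k^J}(\mathbb{C})$ together with diagonal projections from $N_n,\dots,N_{n+m-1}$), I can match these by projections in $\mathfrak{A}f_i^s$ and assemble partial isometries $V_i\in f_i^s Mf_i^s$ with $V_i\mathcal{C}_iV_i^*\subset\mathfrak{A}f_i^s$. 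Because $\sum_i f_i^s=I$, the element $U=\sum_{i=1}^k V_i$ is a unitary in $M$, and the corner-support argument ($V_j A_i\pi_i=0$ for $j\neq i$, etc.) gives $UB_{i,\alpha}U^*=V_iB_{i,\alpha}V_i^*\in\mathfrak{A}$, hence $\operatorname{Ad}U(A\,^n\!p^s_m)\in\mathfrak{A}^{\otimes k}$.

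The principal obstacle is step three: verifying commutativity of $\mathcal{C}$, which rests on unpacking the recursive definition and noticing that the last-level projections $f_i^s$ provide the crucial orthogonality for $i\neq i'$; the matching of traces in the last step is then routine, given the abundance of $k$-adic projections in the Cartan masa $\mathfrak{A}$.
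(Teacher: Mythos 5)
Your argument is correct, and its engine is the same as the paper's: the minimal projections $f_i^s=\,^{(n+m-1)}\!e_{s^{-1}(i)\,s^{-1}(i)}$ are pairwise orthogonal and sum to ${\rm I}$, so a single unitary of the form $U=\sum_i V_i$ with $V_i$ supported under $f_i^s$ (in the paper, $\,^n\!U^s_m=\sum_i U_i\cdot\,^{(n+m-1)}\!e_{s^{-1}(i)\,s^{-1}(i)}$) conjugates the $i$-th tensor slot by $V_i$ alone, reducing the lemma to conjugating each slot separately into $\mathfrak{A}$. Where you differ is in the two surrounding steps, and in both places the paper is shorter. First, the paper does not expand $\mathrm{I}-\,^n\!E_{m-1}=\prod_{j=0}^{m-2}\left(\mathrm{I}-\,^{(n+j)}\!E_1\right)$ into elementary tensors at all: this factor is already a sum of tensor products of diagonal matrix units, hence lies in $\mathfrak{A}^{\otimes k}$ and commutes with every element of $M_{1J}\vee N_{n+m-1}$, so it can simply be carried along unchanged; your expansion over the index families $\alpha$ and the ensuing commutativity check for $\mathcal{C}$ are correct but not needed. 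Second, for the per-slot conjugation the paper uses only that $M_{1J}$ is a finite-dimensional factor whose diagonal is $\mathfrak{A}\cap M_{1J}$, so each selfadjoint $A_i$ is diagonalized by some unitary $U_i\in M_{1J}$; you instead conjugate the whole finite-dimensional abelian algebra $\mathcal{C}_i$ into the Cartan masa by matching traces of minimal projections inside the ${\rm II}_1$-corner $f_i^sMf_i^s$. That route also works (the traces are indeed $k$-adic --- they equal $r/k^{J+m}$ rather than $r/k^{J+m-1}$, an immaterial slip --- and $\mathfrak{A}f_i^s$ is a diffuse masa in the corner), and it would even extend to selfadjoint $A_i$ not contained in any $M_{1J}$; its only loose end is the routine remark that each $V_i$ must be completed to a unitary of the corner by also matching the complements of the supports of $\mathcal{C}_i$, so that $U=\sum_iV_i$ is genuinely unitary.
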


\begin{proof}
Note that
\begin{equation}\label{Orthopat_operato}
\begin{split}
&A\cdot\,\,^n\!p^{s}_m=\left(\mathrm{I}-\,^n\!E_{m-1}\right)\left(B_1\otimes B_2\otimes\cdots\otimes B_k\right),\text{\  where\ }
\\ &B_i=A_i\cdot\;^{(n+m-1)}\!e_{s^{-1}(i)\,s^{-1}(i)}.
\end{split}
\end{equation}
There exists unitary $U_i\in M_{1J}$ such that
\begin{eqnarray}\label{diagonal_form}
U_i\,A_i\,U_i^*\in\mathfrak{A}\cap M_{1j}.
\end{eqnarray}
Since $n>J$, the operator $\,^n\!U^s_m=\sum\limits_{i=1}^k U_i\cdot\;^{(n+m-1)}\!e_{s^{-1}(i)\,s^{-1}(i)}$ is unitary. By (\ref{Orthopat_operato}) and ({\ref{diagonal_form}}),\ \  $\mathfrak{N}^{\otimes k}({\rm Ad}\,\,^n\!U^s_m)\left(A \cdot\,\,^n\!p^{s}_m\right)\,\in\mathfrak{A}^{\otimes k}$.
\end{proof}

\begin{Co}\label{ciclicity_corollary}
Let $A$ be the same as in Lemma \ref{cyclic_lemma}. Then $A$ belongs to the closed linear span of the collection of operators $\left\{\mathfrak{N}^{\otimes k }(\operatorname{Ad}u)\left(\mathfrak{A}^{\otimes k}\right)\right\}_{u\in U(M)}$ with respect to the norm topology of the space $L^2\left(M^{\otimes k},\mathrm{tr}^{\otimes k}\right)$.
\end{Co}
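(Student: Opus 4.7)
The plan is to decompose $A$ as an $L^{2}$-orthogonal sum indexed by the projections $\,^n\!p^s_j$ of (\ref{full_system_orthoprojections}), apply Lemma \ref{cyclic_lemma} separately to each summand, and then pass to the $L^{2}$-limit. Fix any $n>J$. Since the $\,^n\!p^s_j$ are pairwise orthogonal projections in $M^{\otimes k}$ with $\sum_{j,s}\,^n\!p^s_j=I$, I would begin with the identity
\[
A \;=\; \sum_{j\ge 1,\,s\in\mathfrak{S}_k} A\cdot\,^n\!p^s_j,
\]
and check that this sum converges unconditionally in the norm of $L^{2}(M^{\otimes k},\mathrm{tr}^{\otimes k})$.

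For each pair $(j,s)$, Lemma \ref{cyclic_lemma} supplies a unitary $\,^n\!U^s_j\in U(M)$ such that $\mathfrak{N}^{\otimes k}(\operatorname{Ad}\,^n\!U^s_j)(A\cdot\,^n\!p^s_j)\in\mathfrak{A}^{\otimes k}$. Applying the inverse automorphism $\mathfrak{N}^{\otimes k}(\operatorname{Ad}(\,^n\!U^s_j)^{*})$ places the summand $A\cdot\,^n\!p^s_j$ inside $\mathfrak{N}^{\otimes k}(\operatorname{Ad}(\,^n\!U^s_j)^{*})(\mathfrak{A}^{\otimes k})$. Every finite partial sum therefore lies in the linear span of $\{\mathfrak{N}^{\otimes k}(\operatorname{Ad}\,u)(\mathfrak{A}^{\otimes k})\}_{u\in U(M)}$, and the $L^{2}$-limit $A$ lies in its norm-closure --- exactly the conclusion of the corollary.

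The main obstacle I expect is the $L^{2}$-convergence, since the index set is infinite. For any finite subset $F$ of indices, $q_F=I-\sum_{(j,s)\in F}\,^n\!p^s_j$ is again a projection in $M^{\otimes k}$, so the trace property and the fact that $q_F$ is self-adjoint and idempotent give
\[
\Bigl\|A-\sum_{(j,s)\in F}A\cdot\,^n\!p^s_j\Bigr\|_{2}^{2}\;=\;\|A\,q_F\|_{2}^{2}\;=\;\mathrm{tr}^{\otimes k}\!\bigl(q_F\,A^{*}A\bigr),
\]
and the right-hand side tends to $0$ by normality of $\mathrm{tr}^{\otimes k}$ combined with the SOT-convergence $q_F\to 0$ coming from (\ref{full_system_orthoprojections}). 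All the structural content has been packed into Lemma \ref{cyclic_lemma}; what remains beyond the above bookkeeping is just this convergence check.
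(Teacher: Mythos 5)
Your proposal is correct and follows essentially the same route as the paper: decompose $A=\sum_{j,s}A\cdot\,^n\!p^s_j$ via (\ref{full_system_orthoprojections}), apply Lemma \ref{cyclic_lemma} to each summand, and pass to the limit in $L^2$. The paper states this in two lines; your explicit verification of the unconditional $L^2$-convergence via $\|Aq_F\|_2^2=\mathrm{tr}^{\otimes k}(q_FA^*A)\to 0$ is exactly the detail the paper leaves implicit.
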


\begin{proof}
One deduces from (\ref{full_system_orthoprojections}) that
\begin{eqnarray*}
A=\sum\limits_{j=1}^\infty\sum\limits_{s\in\mathfrak{S}_k} A\cdot \,^n\!p^s_{j}.
\end{eqnarray*}
Hence, an application of Lemma \ref{cyclic_lemma} proves our claim.
\end{proof}
\subsection{Proof of Theorem \ref{main_Schur-Weyl}. } Let $\mathfrak{A}$ be a Cartan MASA in $M$ introduced  the beginning of section \ref{pr_th_2}. For convenience, we recall the notations used above:
\begin{eqnarray*}
L_0=\left\{v\in L^2( M,{\rm tr}):{\rm tr}(v)=0  \right\},\;\; L_0^\mathfrak{A}=\left\{x\in L^2\left(\mathfrak{A},\mathrm{tr}\right):\:\mathrm{tr}(x)=0\right\}.
\end{eqnarray*}
We denote  by   $\mathfrak{N}_0^{\otimes k}$ the restriction of $\mathfrak{N}^{\otimes k}$ to $L_0^{\otimes k}$.
Conditional expectation  $\,^k\!E$ introduced in section \ref{CE} is at the same time an orthogonal projection of $L^2( M^{\otimes k},{\rm tr}^{\otimes k})$ onto $L^2\left(\mathfrak{A}^{\otimes k},{\rm tr}^{\otimes k}\right)$ and
\begin{eqnarray}
\,^k\!E\,L_0^{\otimes k}=\left(L_0^\mathfrak{A}\right)^{\otimes k}
\end{eqnarray}
By proposition \ref{commutants_mutually},
\begin{eqnarray}\label{Sch_W}
\left(\,^k\!E\cdot\mathfrak{N}_0^{\otimes k}\left(\operatorname{Ad}\,U(M)\right)\cdot\,^k\!E\right)'=
\left(\;^k\!\mathcal{P}_0^\mathfrak{A}\left(\mathfrak{S}_k\right)\right){''},
\end{eqnarray}
where $\;^k\!\mathcal{P}_0^\mathfrak{A}$ is a restriction of the representation $\;^k\!\mathcal{P}$ (see (\ref{action_of+symmetric_group})) to the subspace $\left(L_0^\mathfrak{A}\right)^{\otimes k}$.

Take any operator $B'\in\left(\mathfrak{N}_0^{\otimes k}\left(\operatorname{Ad}\,U(M)\right)\right)'$.
It follows from Proposition \ref{limits_of_E_J}  that $\,^k\!E\in\left(\mathfrak{N}^{\otimes k}\left(\operatorname{Ad}\,U(M)\right)\right)^{\prime\prime}$. Hence, using (\ref{Sch_W}), we have
\begin{eqnarray}
\,^k\!E\cdot B'\cdot \,^k\!E= B'\cdot \,^k\!E=\,^k\!E\cdot B'\in \left(\;^k\!\mathcal{P}_0^\mathfrak{A}\left(\mathfrak{S}_k\right)\right){''}.
\end{eqnarray}
It follows from Corollary \ref{ciclicity_corollary} that the maps
\begin{eqnarray*}
&\left(\mathfrak{N}_0^{\otimes k}\left(\operatorname{Ad}\,U(M)\right)\right)'\ni X'\stackrel{\Theta}{\mapsto}\,^k\!E\,X'\in \left(\mathfrak{N}_0^{\otimes k}\left(\operatorname{Ad}\,U(M)\right)\right)'\,^k\!E,\\
&\left(\;^k\!\mathcal{P}_0^\mathfrak{A}\left(\mathfrak{S}_k\right)\right){''}\ni X'\stackrel{\Phi}{\mapsto}\,^k\!E\,X'\in \left(\;^k\!\mathcal{P}_0^\mathfrak{A}\left(\mathfrak{S}_k\right)\right){''}
\end{eqnarray*}
are isomorphisms. Hence, using the equality $$\left(\mathfrak{N}_0^{\otimes k}\left(\operatorname{Ad}\,U(M)\right)\right)'\,^k\!E\,\stackrel{(\ref{Sch_W})}{=}
\left(\;^k\!\mathcal{P}_0^\mathfrak{A}\left(\mathfrak{S}_k\right)\right){''},$$ we get that $B'\in \left(\;^k\!\mathcal{P}_0^\mathfrak{A}\left(\mathfrak{S}_k\right)\right){''}$. Theorem \ref{main_Schur-Weyl} is proven.\qed
\section{The Schur-Weyl duality for automor\-phisms group of factor and the symmetric inverse se\-migroup}\label{finite_inverse_semigroup}
{\it The symmetric inverse semigroup} $\mathscr{I}_k$ is formed by all the partial {\it bijections} from the set $X_k=\left\{1,2,\ldots,k\right\}$ to itself, with the natural definition of multiplication. An element $\mathbf{b}\in\mathscr{I}_m$ is conveniently written as $\mathbf{b}=\left(\begin{matrix}i_1&i_2&\ldots &i_r\\j_1&j_2&\ldots &j_r\end{matrix}\right)$, where $\left\{i_1,i_2,\ldots,i_r\right\}\subset X_k$, $\left\{j_1,j_2,\ldots,j_r\right\}\subset X_k$ and $i_l$ maps to $j_l$. The number $r$ involved here is denoted by $\operatorname{rank}\mathbf{b}$. There exists a natural involution on $\mathscr{I}_k$:  $\mathbf{b}^*=\left(\begin{matrix}j_1&j_2&\ldots &j_r\\i_1&i_2&\ldots &i_r\end{matrix}\right)$. Denote by $\mathrm{id}_\mathcal{A}\in \mathscr{I}_m$ the partial bijection obtained by restricting the identity map to $\mathcal{A}\subset X_k$; introduce also the abbreviation $\epsilon_j=\mathrm{id}_{\left(X_m\setminus\{j\}\right)}$. The subcollection $\left\{\mathbf{b}\in\mathscr{I}_k:\operatorname{rank}\mathbf{b}=k\right\}$ is just the ordinary symmetric group $\mathfrak{S}_k$.

Let $\left\{s_i\right\}_{i=1}^{k-1}$ be the collection of Coxeter generators of $\mathfrak{S}_k$, where $s_i=(i\;i+1)$ is the transposition of $i$ and $i+1$. The following claim is due to L. Popova \cite{Popova}. A more up-to-date exposition of her results is given in \cite{East}.

\begin{Th}[{\bf A description of $\mathscr{I}_m$ in the terms of the generators and the relations}]\label{Popova_Th}\hfill\\
The semigroup $\mathscr{I}_k$ is generated by $\left\{s_i\right\}_{i=1}^{k-1}$ and $\epsilon_1$ with the relations as follows:
\begin{description}
\item[a)] the Coxeter relations for $\left\{s_i\right\}_{i=1}^{k-1}$;
\item[b]) $s_i\,\epsilon_1=\epsilon_1\,s_i$ for all $i>1$;
\item[c)] $(s_1\,\epsilon_1)^2=(\epsilon_1\,s_1)^2=\epsilon_1\,s_1\,\epsilon_1$.
\end{description}
\end{Th}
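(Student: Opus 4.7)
The plan is to establish Theorem~\ref{Popova_Th} in two parts: (I) the relations (a)--(c) are satisfied among the concrete partial bijections $s_i,\epsilon_1\in\mathscr{I}_k$; and (II) these relations, together with the generators, are already enough to present $\mathscr{I}_k$, so the canonical surjection $\pi\colon\widetilde{\mathscr{I}}_k\to\mathscr{I}_k$ from the abstractly presented semigroup is an isomorphism.

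Part (I) is by direct inspection. The Coxeter relations are standard in $\mathfrak{S}_k\subset\mathscr{I}_k$. Relation (b) holds because, for $i>1$, the transposition $s_i=(i\;i+1)$ fixes the point $1$ and therefore commutes with the partial identity $\epsilon_1=\mathrm{id}_{X_k\setminus\{1\}}$. Relation (c) is verified by computing that each of $(s_1\epsilon_1)^2$, $(\epsilon_1 s_1)^2$, and $\epsilon_1 s_1\epsilon_1$, regarded as partial bijections, has domain and range $X_k\setminus\{1,2\}$ and acts as the identity there, so all three equal $\mathrm{id}_{X_k\setminus\{1,2\}}$.

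For Part (II) my approach is a normal-form/counting argument. In $\widetilde{\mathscr{I}}_k$, set $\epsilon_j:=w_j\,\epsilon_1\,w_j^{-1}$ with $w_j=s_{j-1}\cdots s_1$, and $\epsilon_I:=\prod_{j\in I}\epsilon_j$ for $I\subseteq X_k$. The target normal form is $\mathbf{b}=w\,\epsilon_I$, where $w$ runs over a fixed system of representatives for the left cosets of the subgroup $\mathfrak{S}_I\subset\mathfrak{S}_k$ of permutations supported on $I$. Reaching this form requires a chain of consequences of (a)--(c): (i) independence of $\epsilon_j$ of the reduced word chosen for $w_j$; (ii) $\epsilon_j^2=\epsilon_j$ and $\epsilon_i\epsilon_j=\epsilon_j\epsilon_i$; (iii) the commutation $\epsilon_i s_j=s_j\epsilon_i$ whenever $i\notin\{j,j+1\}$; (iv) the rewrite $\epsilon_1 s_1\epsilon_1=\epsilon_2\epsilon_1$ read off from (c); and (v) the absorption $s\,\epsilon_I=\epsilon_I$ for $s\in\mathfrak{S}_I$. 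With (i)--(v) in hand, one pushes every $\epsilon$-letter in a word to the right using (iii) and (iv), and then replaces the leading permutation by its coset representative using (v).

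The main obstacle lies in extracting items (i) and (v) cleanly from the abstract presentation: conjugate versions of (b) must be derived inductively along transposition factorizations, and the seed $s_1\,\epsilon_{\{1,2\}}=\epsilon_{\{1,2\}}$ for (v) must be teased out of the more cryptic relation (c) by direct manipulation. Once the normal form is in place, the existence of such a representative $w\,\epsilon_I$ for every element yields
\begin{equation*}
|\widetilde{\mathscr{I}}_k|\le\sum_{I\subseteq X_k}\frac{k!}{|I|!}=\sum_{r=0}^k\binom{k}{r}^2 r!=|\mathscr{I}_k|,
\end{equation*}
and combined with the surjectivity of $\pi$ this forces $\pi$ to be a bijection, completing the proof.
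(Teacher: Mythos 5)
First, a point of comparison: the paper does not prove Theorem~\ref{Popova_Th} at all --- it is quoted as a known result of Popova, with East's paper cited for a modern exposition --- so there is no internal proof to measure you against. Your overall strategy (verify the relations inside $\mathscr{I}_k$, then run a normal-form and counting argument giving $|\widetilde{\mathscr{I}}_k|\le\sum_{I\subseteq X_k}k!/|I|!=\sum_{r}\binom{k}{r}^{2}r!=|\mathscr{I}_k|$, forcing the canonical surjection to be bijective) is exactly the standard route taken in the cited literature, and your Part (I) and the counting identity are correct.

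There is, however, a genuine gap at your step (ii). The idempotency $\epsilon_1^2=\epsilon_1$ is \emph{not} a consequence of (a)--(c) as printed: the assignment $s_i\mapsto 0$, $\epsilon_1\mapsto 1$ extends to a homomorphism from the semigroup presented by (a)--(c) onto the additive monoid $\left(\mathbb{Z}_{\ge 0},+\right)$, because every relation in (a)--(c) has the same number of occurrences of $\epsilon_1$ on each side (zero for the Coxeter relations, one for (b), two for each word in (c)). Consequently $\epsilon_1^2\ne\epsilon_1$ in $\widetilde{\mathscr{I}}_k$, the presented semigroup is infinite, and no counting argument can close. The defect is inherited from the statement as printed: Popova's presentation, and East's version of it, contain the additional relation $\epsilon_1^2=\epsilon_1$, which has been dropped here; you must add it before Part (II) can start. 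With that relation restored your scheme is the right one, but you should then actually carry out the derivations you defer --- in particular (i), the independence of $\epsilon_j$ from the chosen reduced word for $w_j$, and the general case of the absorption (v), which require an induction along transposition factorizations and constitute the real content of the theorem. As written, the proposal explicitly flags these as ``the main obstacle'' without resolving them, so Part (II) is an outline rather than a proof.
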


This implies that one can realize $\mathscr{I}_k$ as a semigroup of $\{0,1\}$-matrices $a=\left[  a_{ij} \right]$ with the ordinary matrix multiplication in such a way that $a$ has at most one nonzero entry in each row and each column. The matrix $a=\left[  a_{ij} \right]$, where $a_{11}=0$ and $a_{ij}=\delta_{ij}$, if $i\neq 1$ or $j\neq 1$, corresponds to $\epsilon_1$ under this realization.

Let $\mathbb{C}\left[ \mathfrak{S}_k \right]$ be the complex group algebra of the symmetric group $\mathfrak{S}_k$. This algebra  as well as  the group algebra of every finite group, is semisimple.
The complex semigroup algebra $\mathbb{C}\left[ \mathscr{I}_k \right]$ of the inverse symmetric semigroup is semisimple too. Namely, Munn proved the next statement.
\begin{Th}[\cite{Munn_2}]\label{Munn_th}
The algebra $\mathbb{C}\left[ R_k \right]$ has the decomposition
$$\mathbb{C}\left[ R_k \right]=\bigoplus\limits_{l=0}^k \mathbb{M}_{\binom{k}{l}}\left( \mathbb{C}\left[ \mathfrak{S}_l \right] \right),$$
where $\mathbb{M}_j(A)$ is the algebra of all $j\times j$-matrices over an algebra $A$.
\end{Th}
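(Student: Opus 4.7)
The plan is to construct an explicit isomorphism by a change of basis. First I would unwrap the structure of $\mathscr{I}_k$: every nonzero element $\mathbf{b}$ of rank $l$ is determined by the triple $(\mathrm{ran}\,\mathbf{b},\mathrm{dom}\,\mathbf{b},\sigma(\mathbf{b}))$, where $\mathrm{ran}\,\mathbf{b},\mathrm{dom}\,\mathbf{b}\subset X_k$ are $l$-element subsets and, after fixing once and for all a reference bijection $\phi_C\colon\{1,\ldots,l\}\to C$ for every $l$-subset $C\subset X_k$, $\sigma(\mathbf{b})=\phi_{\mathrm{ran}\,\mathbf{b}}^{-1}\,\mathbf{b}\,\phi_{\mathrm{dom}\,\mathbf{b}}\in\mathfrak{S}_l$. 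This parametrization gives $\binom{k}{l}^2\cdot l!$ elements of rank $l$, and the identity $\sum_{l=0}^k\binom{k}{l}^2 l!=|\mathscr{I}_k|$ matches the dimension of the right-hand side; only the algebra structure is at stake.

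The naive assignment $\mathbf{b}\mapsto E_{\mathrm{ran}\,\mathbf{b},\mathrm{dom}\,\mathbf{b}}\otimes\sigma(\mathbf{b})$ is not multiplicative, because composing two rank-$l$ partial bijections in $\mathscr{I}_k$ can produce something of strictly smaller rank whenever $\mathrm{dom}\,\mathbf{b}_1$ only partially overlaps $\mathrm{ran}\,\mathbf{b}_2$, while on the matrix side the corresponding product is zero. The remedy is a M\"obius transform with respect to the natural restriction order $\mathbf{b}'\le\mathbf{b}\Leftrightarrow \mathbf{b}'=\mathbf{b}|_{\mathrm{dom}\,\mathbf{b}'}$ on $\mathscr{I}_k$. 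Each interval $[\mathbf{b}',\mathbf{b}]$ is order-isomorphic to the Boolean lattice $2^{\mathrm{dom}\,\mathbf{b}\setminus\mathrm{dom}\,\mathbf{b}'}$, whose M\"obius function is $(-1)^{\mathrm{rank}\,\mathbf{b}-\mathrm{rank}\,\mathbf{b}'}$. I would therefore pass to the new basis
\[
f_\mathbf{b}=\sum_{\mathbf{b}'\le\mathbf{b}}(-1)^{\mathrm{rank}\,\mathbf{b}-\mathrm{rank}\,\mathbf{b}'}\,\mathbf{b}',\qquad \mathbf{b}\in\mathscr{I}_k.
\]

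The crux, and the step I expect to be the main obstacle, is to establish the multiplication rule
\[
f_{\mathbf{b}_1}\,f_{\mathbf{b}_2}=\begin{cases} f_{\mathbf{b}_1\mathbf{b}_2}&\text{if }\mathrm{dom}\,\mathbf{b}_1=\mathrm{ran}\,\mathbf{b}_2,\\ 0&\text{otherwise}.\end{cases}
\]
Expanding both sides and regrouping the terms $\mathbf{b}_1'\mathbf{b}_2'$ by common value, this reduces to a nested inclusion-exclusion identity over two Boolean intervals whose alternating sum collapses unless the intermediate subsets match exactly. Once this rule is in hand, the remainder is bookkeeping: distinct ranks are mutually annihilating under the new multiplication (since the rule forces $|\mathrm{dom}\,\mathbf{b}_1|=|\mathrm{ran}\,\mathbf{b}_2|$), producing a decomposition $\mathbb{C}[\mathscr{I}_k]=\bigoplus_{l=0}^k A_l$ with $A_l$ spanned by $\{f_\mathbf{b}:\mathrm{rank}\,\mathbf{b}=l\}$. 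Inside $A_l$, the pair $(\mathrm{ran}\,\mathbf{b},\mathrm{dom}\,\mathbf{b})$ plays the role of matrix-unit indices while $\sigma(\mathbf{b})$ composes group-theoretically in $\mathfrak{S}_l$, identifying $A_l$ with $\mathbb{M}_{\binom{k}{l}}(\mathbb{C}[\mathfrak{S}_l])$ and yielding the claimed decomposition.
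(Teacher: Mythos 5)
Your argument is correct, but it is worth noting that the paper does not prove this statement at all: it is imported verbatim from Munn's 1957 paper, whose original route goes through the general structure theory of semisimple semigroup algebras (principal series and Rees matrix semigroups over the maximal subgroups $\mathfrak{S}_l$). What you propose instead is the now-standard ``groupoid basis'' proof via M\"obius inversion on the natural order of the inverse semigroup, and it does work. The one step you flag as the main obstacle --- the rule $f_{\mathbf{b}_1}f_{\mathbf{b}_2}=f_{\mathbf{b}_1\mathbf{b}_2}$ when $\operatorname{dom}\mathbf{b}_1=\operatorname{ran}\mathbf{b}_2$ and $0$ otherwise --- is cleanest to verify in the inverse direction: show that $\varphi(\mathbf{b})=\sum_{\mathbf{c}\le\mathbf{b}}[\mathbf{c}]$ is a homomorphism onto the groupoid algebra by checking that the composable pairs $(\mathbf{c},\mathbf{d})$ with $\mathbf{c}\le\mathbf{b}_1$, $\mathbf{d}\le\mathbf{b}_2$, $\operatorname{dom}\mathbf{c}=\operatorname{ran}\mathbf{d}$ are in bijection with the restrictions $\mathbf{e}\le\mathbf{b}_1\mathbf{b}_2$ (given $\mathbf{e}$ with domain $D$, take $\mathbf{d}=\mathbf{b}_2|_D$ and $\mathbf{c}=\mathbf{b}_1|_{\mathbf{b}_2(D)}$); your $f_\mathbf{b}$ is then $\varphi^{-1}$ of a groupoid basis element by M\"obius inversion, so the multiplication rule follows rather than having to be checked by a direct double inclusion--exclusion. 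Your identification of the intervals with Boolean lattices, hence of the M\"obius function with $(-1)^{\operatorname{rank}\mathbf{b}-\operatorname{rank}\mathbf{b}'}$, is correct, as is the final bookkeeping identifying the rank-$l$ block with $\mathbb{M}_{\binom{k}{l}}\bigl(\mathbb{C}[\mathfrak{S}_l]\bigr)$ (note this uses the full, non-contracted semigroup algebra, so the empty map supplies the $l=0$ summand, consistent with the dimension count). Compared with Munn's original argument, your route is more elementary and self-contained, at the price of the combinatorial verification of the homomorphism property; Munn's buys the result as an instance of a general theory of inverse semigroup algebras.
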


Denote by $\Upsilon_m$ the set of all unordered partitions of  a positive integer $m\leq k$. It follows from  previous theorem that the set of the irreducible representations of the semigroup $R_k$ can be naturally indexed by the set $\bigcup\limits_{m=0}^k\Upsilon_m$.
\subsection{The action of $\mathscr{I}_k$ on $L^2\left(M^{\otimes k},\mathrm{tr}^{\otimes k}\right)$.}\label{section_action}

Consider the operators $\;^k\!\mathcal{P}^\mathscr{I}(\epsilon_i)$ on $L^2\left(M^{\otimes k},\mathrm{tr}^{\otimes k}\right)$:
\begin{multline}
\;^k\!\mathcal{P}^\mathscr{I}(\epsilon_i)\left(\cdots\otimes v_{i-1}\otimes v_i\otimes v_{i+1}\otimes\cdots\right)
\\ =\mathrm{tr}\left(v_i\right)\left(\cdots\otimes v_{i-1}\otimes\mathrm{I}\otimes v_{i+1}\otimes\cdots\right).
\end{multline}
Set also $\;^k\!\mathcal{P}^\mathscr{I}(s)=\;^k\!\mathcal{P}(s)$ with $s\in\mathfrak{S}_k$, see (\ref{action_of+symmetric_group}). Theorem \ref{Popova_Th} implies that $\;^k\!\mathcal{P}^\mathscr{I}$ admits an extension to a representation of $\mathscr{I}_k$. One has the following obvious  result:
\begin{Prop}\label{compare_bicommutant_inverse}
$\left( \mathfrak{N}^{\otimes k}\left( {\rm Aut}\,M \right) \right)^{\prime\prime}\subset\left(\;^k\!\mathcal{P}^\mathscr{I}(\mathscr{I}_k)\right)^\prime$.
\end{Prop}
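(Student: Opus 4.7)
The plan is to reduce Proposition~\ref{compare_bicommutant_inverse} to the pointwise commutation
$\mathfrak{N}^{\otimes k}(\theta)\,\;^k\!\mathcal{P}^\mathscr{I}(a) = \;^k\!\mathcal{P}^\mathscr{I}(a)\,\mathfrak{N}^{\otimes k}(\theta)$ for every $\theta\in\operatorname{Aut}M$ and every $a\in\mathscr{I}_k$. Once $\;^k\!\mathcal{P}^\mathscr{I}(\mathscr{I}_k)\subset\left(\mathfrak{N}^{\otimes k}(\operatorname{Aut}M)\right)'$ is established, taking commutants reverses the inclusion and yields $\left(\mathfrak{N}^{\otimes k}(\operatorname{Aut}M)\right)''\subset\left(\mathfrak{N}^{\otimes k}(\operatorname{Aut}M)\right)'''\supset\left(\;^k\!\mathcal{P}^\mathscr{I}(\mathscr{I}_k)\right)''\supset\;^k\!\mathcal{P}^\mathscr{I}(\mathscr{I}_k)$; more directly, $A\subset B'$ is equivalent to $B\subset A'$, so the inclusion we want falls out at once.

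By Popova's presentation (Theorem~\ref{Popova_Th}), the semigroup $\mathscr{I}_k$ is generated by the Coxeter transpositions $s_1,\dots,s_{k-1}$ together with the idempotent $\epsilon_1$. Since commutation with a fixed operator is closed under products, it suffices to check the commutation on these generators. For $s_i\in\mathfrak{S}_k$, the relation $\mathfrak{N}^{\otimes k}(\theta)\;^k\!\mathcal{P}(s_i)=\;^k\!\mathcal{P}(s_i)\mathfrak{N}^{\otimes k}(\theta)$ was already recorded in the Introduction: the diagonal action and the permutation action manifestly commute, which is exactly the reason the combined representation $\mathcal{F}$ of $\operatorname{Aut}M\times\mathfrak{S}_k$ is well defined.

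It remains to handle $\epsilon_1$. On a simple tensor $v_1\otimes v_2\otimes\cdots\otimes v_k$ one computes, using $\theta(\mathrm{I})=\mathrm{I}$,
\begin{equation*}
\mathfrak{N}^{\otimes k}(\theta)\,\;^k\!\mathcal{P}^\mathscr{I}(\epsilon_1)\left(v_1\otimes\cdots\otimes v_k\right) = \operatorname{tr}(v_1)\,\mathrm{I}\otimes\theta(v_2)\otimes\cdots\otimes\theta(v_k),
\end{equation*}
whereas
\begin{equation*}
\;^k\!\mathcal{P}^\mathscr{I}(\epsilon_1)\,\mathfrak{N}^{\otimes k}(\theta)\left(v_1\otimes\cdots\otimes v_k\right) = \operatorname{tr}(\theta(v_1))\,\mathrm{I}\otimes\theta(v_2)\otimes\cdots\otimes\theta(v_k).
\end{equation*}
The two expressions agree precisely because any automorphism of a ${\rm II}_1$-factor preserves the unique normal tracial state, i.e.\ $\operatorname{tr}\circ\theta=\operatorname{tr}$. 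Extending by linearity and by norm-continuity from simple tensors to all of $L^2(M^{\otimes k},\operatorname{tr}^{\otimes k})$ settles the commutation.

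There is no substantive obstacle here (the paper itself flags the statement as obvious): the whole argument rests on the two ingredients above — Popova's presentation, which collapses the check to finitely many generators, and the trace-invariance of automorphisms of a ${\rm II}_1$-factor, which handles the single generator $\epsilon_1$.
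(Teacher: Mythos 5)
Your proof is correct and is exactly the argument the paper leaves implicit (it states the proposition as ``obvious''): the permutation operators commute with the diagonal action for the usual reason, and the single extra generator $\epsilon_1$ commutes because every $\theta\in\operatorname{Aut}M$ preserves the unique normalized trace, so $\operatorname{tr}\circ\theta=\operatorname{tr}$; passing from $\mathfrak{N}^{\otimes k}(\operatorname{Aut}M)\subset\left(\;^k\!\mathcal{P}^{\mathscr{I}}(\mathscr{I}_k)\right)'$ to the bicommutant inclusion is then the standard fact $B'''=B'$. One cosmetic remark: the displayed chain $\left(\mathfrak{N}^{\otimes k}(\operatorname{Aut}M)\right)''\subset\left(\mathfrak{N}^{\otimes k}(\operatorname{Aut}M)\right)'''$ in your opening paragraph is not a valid inclusion and should be deleted in favour of the ``more directly'' argument you give right after it, which is the correct one.
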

Below we prove the next statement, which is the analogue of Schur-Weyl duality for  ${\rm Aut}\,M$ and $\mathscr{I}_k$.
\begin{Th}\label{Inverse_Schur-Weyl}
$\left( \mathfrak{N}^{\otimes k}\left( {\rm Aut}\,M \right) \right)^{\prime\prime}=$ $\left(\;^k\!\mathcal{P}^\mathscr{I}(\mathscr{I}_k)\right)^\prime$.
\end{Th}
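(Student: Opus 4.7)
The strategy is to reduce to Theorem~\ref{main_Schur-Weyl} by decomposing $L^2(M,\operatorname{tr})^{\otimes k}$ and treating cross-block intertwiners separately. Writing $L^2(M,\operatorname{tr})=\mathbb{C}\mathrm{I}\oplus L_0$, for each $S\subseteq\{1,\ldots,k\}$ let $H_S$ be the subspace of $L^2(M,\operatorname{tr})^{\otimes k}$ carrying $L_0$ at positions in $S$ and $\mathbb{C}\mathrm{I}$ elsewhere, so that $L^2(M,\operatorname{tr})^{\otimes k}=\bigoplus_S H_S$. The orthogonal projection $E_S$ onto $H_S$ is a polynomial in the projections $\;^k\!\mathcal{P}^\mathscr{I}(\epsilon_i)$ onto $\mathbb{C}\mathrm{I}$ at position $i$, hence $E_S\in\;^k\!\mathcal{P}^\mathscr{I}(\mathscr{I}_k)''$. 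Moreover $\mathfrak{N}^{\otimes k}(\operatorname{Aut} M)$ preserves each $H_S$, and under $H_S\simeq L_0^{\otimes|S|}$ its restriction is naturally identified with $\mathfrak{N}_0^{\otimes|S|}$.

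Taking commutants of the desired identity, it suffices to prove $(\mathfrak{N}^{\otimes k}(\operatorname{Aut} M))'\subseteq\;^k\!\mathcal{P}^\mathscr{I}(\mathscr{I}_k)''$. Given $B'$ in the commutant, decompose $B'=\sum_{S,S'}B'_{S,S'}$ with $B'_{S,S'}:=E_{S'}B'E_S$; each block $B'_{S,S'}\colon H_S\to H_{S'}$ is then an $\operatorname{Aut} M$-intertwiner. When $|S|=|S'|=m$, Theorem~\ref{main_Schur-Weyl} identifies the endomorphism algebra of $L_0^{\otimes m}$ as an $\operatorname{Aut} M$-module with the $m!$-dimensional algebra $\;^m\!\mathcal{P}_0(\mathfrak{S}_m)''$; a direct verification then shows that the $m!$ operators $\;^k\!\mathcal{P}^\mathscr{I}(\mathbf{b})|_{H_S}$, as $\mathbf{b}$ ranges over the rank-$m$ partial bijections of $\mathscr{I}_k$ with $\mathrm{dom}(\mathbf{b})=S$ and $\mathrm{im}(\mathbf{b})=S'$, are linearly independent and therefore exhaust the space of $\operatorname{Aut} M$-intertwiners $H_S\to H_{S'}$. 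Thus the ``size-preserving'' piece $\sum_{|S|=|S'|}B'_{S,S'}$ of $B'$ already lies in the linear span of $\;^k\!\mathcal{P}^\mathscr{I}(\mathscr{I}_k)$.

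The main obstacle is to show $B'_{S,S'}=0$ whenever $|S|\neq|S'|$, which amounts to pairwise disjointness of the $\operatorname{Aut} M$-representations $\mathfrak{N}_0^{\otimes m}$ for distinct $m\geq 0$. My plan is to place every level projection $Q_m:=\sum_{|S|=m}E_S$ into $(\mathfrak{N}^{\otimes k}(\operatorname{Aut} M))''$, so that any $B'$ in the commutant must commute with each $Q_m$ and cannot mix different levels. Since $M$ is AFD, there exist mixing automorphisms $\theta_n\in\operatorname{Aut} M$ (arising for instance from a Bernoulli-type shift on $M\simeq\bigotimes_n N_n$) with $\mathfrak{N}^0(\theta_n)\to 0$ in the weak operator topology; hence $\mathfrak{N}^{\otimes k}(\theta_n)\to P_0^{\otimes k}=Q_0$ in WOT and $Q_0$ lies in the bicommutant. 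To reach the remaining $Q_m$ I would combine Theorem~\ref{natural_action} (irreducibility of $\mathfrak{N}^0$) with Kaplansky-type density to produce, for $k+1$ distinct scalars $\lambda\in\overline{\mathbb{D}}$, sequences $\theta_n^{(\lambda)}$ with $\mathfrak{N}(\theta_n^{(\lambda)})\to P_0+\lambda(I-P_0)$ in WOT. Then $\mathfrak{N}^{\otimes k}(\theta_n^{(\lambda)})\to\sum_m\lambda^m Q_m$ lies in $(\mathfrak{N}^{\otimes k}(\operatorname{Aut} M))''$, and a Vandermonde inversion in $\lambda$ isolates each individual $Q_m$. The analytical core of the argument is the construction of enough such WOT-limits; once it is in hand, the previous paragraph finishes the proof by displaying $B'$ as a linear combination of operators from $\;^k\!\mathcal{P}^\mathscr{I}(\mathscr{I}_k)$.
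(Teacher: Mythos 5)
Your block decomposition is exactly the paper's: your $E_S$ are its projections $\,^k\!P_\mathcal{A}$, your $Q_m$ its $\,^k\!P_m$, and your treatment of the diagonal blocks $|S|=|S'|=m$ (reduce to Theorem \ref{main_Schur-Weyl} on $L_0^{\otimes m}$ and span the intertwiners $H_S\to H_{S'}$ by the rank-$m$ partial bijections with domain $S$ and image $S'$) is in substance the paper's final step via Lemma \ref{Lemma about restriction} and conjugation by $\,^k\!\mathcal{P}(s_\mathcal{A})$, $\,^k\!\mathcal{P}(s_\mathcal{B})$. The gap is in the level-separation step, which you correctly identify as the analytical core but do not actually supply. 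First, the operators $P_0+\lambda(I-P_0)$ with $\lambda\in\overline{\mathbb{D}}\setminus\mathbb{R}$ are \emph{not} WOT-limits of operators $\mathfrak{N}(\theta_n)$: every $\mathfrak{N}(\theta)$ commutes with the antiunitary conjugation $x\mapsto x^*$ of $L^2(M,{\rm tr})$, hence so does every WOT-limit, while $P_0+\lambda(I-P_0)$ does so only for real $\lambda$. Second, and more seriously, irreducibility of $\mathfrak{N}_I^0$ plus Kaplansky density only lets you approximate contractions by elements of the unit ball of the $*$-algebra generated by $\mathfrak{N}(\operatorname{Aut}M)$, i.e.\ by \emph{linear combinations} of group elements; but the passage to $k$-th tensor powers, which is what puts $\sum_m\lambda^mQ_m=(P_0+\lambda(I-P_0))^{\otimes k}$ into $(\mathfrak{N}^{\otimes k}(\operatorname{Aut}M))''$, is only legitimate for a uniformly bounded net of honest group elements $\mathfrak{N}(\theta_n)$ (a linear combination $\sum_ic_i\mathfrak{N}(\theta_i)$ has a $k$-th tensor power involving cross terms $\mathfrak{N}(\theta_{i_1})\otimes\cdots\otimes\mathfrak{N}(\theta_{i_k})$ that do not lie in the diagonal algebra). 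So the construction of the required $\theta_n^{(\lambda)}$ for $k+1$ distinct $\lambda$ is neither given nor a consequence of the tools you cite, and without it the vanishing of the off-diagonal blocks $B'_{S,S'}$, $|S|\neq|S'|$, is unproved.

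The paper circumvents precisely this difficulty by never asking for $P_0+\lambda(I-P_0)$ as a limit of group elements. It first compresses by the conditional expectation $\,^k\!E$, which lies in $(\mathfrak{N}^{\otimes k}(\operatorname{Ad}\,U(M)))''$ by Proposition \ref{limits_of_E_J}, and computes from \eqref{Operator_unistochastic} and \eqref{matrix_U_unistoh} that the explicit inner automorphisms $\operatorname{Ad}\,^\theta\mathbf{U}$ satisfy
\begin{equation*}
\,^k\!E\circ\mathfrak{N}^{\otimes k}(\operatorname{Ad}\,^\theta\mathbf{U})\circ\,^k\!E=\sum_{j=0}^k\Bigl(1-\tfrac{|\theta-1|^2}{n}\Bigr)^{j}\,^k\!E\circ\,^k\!P_j ,
\end{equation*}
with the real parameter $1-|\theta-1|^2/n$ taking infinitely many values; the Vandermonde separation is then performed on the \emph{compressed} operator, and the compression is removed at the end using the cyclicity statement of Corollary \ref{ciclicity_corollary}. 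If you want to complete your argument along your own lines, you must either prove the existence of automorphisms realizing $P_0+\lambda(I-P_0)$ as WOT-limits for $k+1$ distinct real $\lambda$, or adopt the paper's device of interposing $\,^k\!E$.
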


\begin{Rem}
The operator $\;^k\!\mathcal{P}^\mathscr{I}(\epsilon_i)$ is an orthogonal projection in $L^2\left( M, {\rm tr} \right)^{\otimes k}$ and
\begin{multline*}
\prod\limits_{i=1}^k\left( {\rm I}-\;^k\!\mathcal{P}^\mathscr{I}(\epsilon_i)\right)L^2\left( M^{\otimes k}, {\rm tr}^{\otimes k} \right)
\\ =\left\{v\in L^2\left( M^{\otimes k}, {\rm tr}^{\otimes k} \right):\;^k\!\mathcal{P}^\mathscr{I}(\epsilon_i)v=0\;\text{ for all }\; i=1,2,\ldots,k\right\}=L_0^{\otimes k}.
\end{multline*}
\end{Rem}
Let $\wp_m(X_k)$ be the collection\footnote{$\wp_0(X_k)$ is the unique empty subset.} of all non-ordered $m$-element subsets of $X_k$. With $\mathcal{A}\in\wp_m(X_k)$, let us introduce the pairwise orthogonal projections $\,^k\!P_\mathcal{A}$  as follows
\begin{eqnarray*}
\,^k\!P_\mathcal{A}=\prod\limits_{j\in X_k\setminus\mathcal{A}}\;^k\!\mathcal{P}^\mathscr{I}(\epsilon_j)\cdot\prod\limits_{j\in \mathcal{A}}\left({\rm I}-\;^k\!\mathcal{P}^\mathscr{I}(\epsilon_j)\right).
\end{eqnarray*}
Hence
\begin{equation}\label{properties}
\begin{aligned}
&\;^k\!\mathcal{P}^\mathscr{I}(\epsilon_j)\,^k\!P_\mathcal{A}=0 &&\text{\ for all\ } j\in \mathcal{A},&
\\ &\;^k\!\mathcal{P}^\mathscr{I}(\epsilon_j)\,^k\!P_\mathcal{A}=\,^k\!P_\mathcal{A}
&&\text{\ for all\ } j\in X_k\setminus\mathcal{A}.&
\end{aligned}
\end{equation}
Since the projections $\,^k\!P_\mathcal{A}$ and $\,^k\!P_\mathcal{B}$ are orthogonal for different $\mathcal{A}$ and $\mathcal{B}$, then operator $\,^k\!P_m=\sum\limits_{\mathcal{A}\in\wp_m(X_k)}\,^k\!P_\mathcal{A}$ is an orthogonal projection.
It is clear that $\,^k\!P_k\,L^2\left(  M^{\otimes k}, {\rm tr}^{\otimes k} \right)=L_0^{\otimes k}$, $\,^k\!P_k\,L^2\left(  M^{\otimes k}, {\rm tr}^{\otimes k} \right)=\mathbb{C}{\rm I}^{\otimes k}$ and $$\sum\limits_{m=0}^k \,^k\!P_m\,L^2\left(  M^{\otimes k}, {\rm tr}^{\otimes k} \right)=L^2\left(  M^{\otimes k}, {\rm tr}^{\otimes k} \right).$$

Let $m\leq k$ and let $\mathfrak{S}_m=\left\{ s\in\mathfrak{S}_k: s(j)=j \text{ for all } j\in X_k\setminus X_m  \right\}$, where $X_m=\{1,2,\ldots, m\}\subset X_k$. Denote by $\chi_\gamma$ the character of the irreducible representation $T_\gamma$ of $\mathfrak{S}_m$, corresponding to $\gamma$ $\in\Upsilon_m$, such that its value on the unit is equal to the dimension of $T_\gamma$. Then Young projection $$P^\gamma=\frac{{\rm dim} \,\gamma }{m!}\sum\limits_{s\in\mathfrak{S}_m}\chi_\gamma(s)\;\;^k\!\mathcal{P}^{\mathscr{I}}(s)$$ lies in the center of $*$-algebra generated by $\;^k\!\mathcal{P}^{\mathscr{I}}\left(\mathfrak{S}_m\right)$. Since $\,^k\!P_{X_m}$ belongs to $\;^k\!\mathcal{P}^{\mathscr{I}}\left(\mathfrak{S}_m\right)^\prime$, then $\,^k\!P_{X_m}^\gamma=\,^k\!P_{X_m}\cdot P^\gamma$ is an orthogonal projection from $\;^k\!\mathcal{P}^{\mathscr{I}}\left(\mathfrak{S}_m\right)^\prime$.
Denote by $\,^k\!\mathcal{H}_m^\gamma$ the closure of the linear span of the set $$\left\{ \;^k\!\mathcal{P}^{\mathscr{I}}\left(\mathscr{I}_k\right)\,^k\!P_{X_m}^\gamma\,L^2\left(  M^{\otimes k}, {\rm tr}^{\otimes k}  \right)  \right\}$$ with respect to the norm topology of the space $L^2\left(M^{\otimes k},\mathrm{tr}^{\otimes k}\right)$. By proposition \ref{compare_bicommutant_inverse}, the $\;^k\!\mathcal{P}^{\mathscr{I}}$-invariant subspace $\,^k\!\mathcal{H}_m^\gamma$ is $\mathfrak{N}^{\otimes k}\left({\rm Aut}\,M \right)$-invariant too.

\subsection{Decomposing $\mathfrak{N}^{\otimes k}$ into factor-components.}
Set $\,^k\!\mathcal{H}_{X_m}=\,^k\!P_{X_m}\,L^2\left( M^{\otimes k},{\rm tr}^{\otimes k} \right)$. By proposition \ref{compare_bicommutant_inverse}, $\,^k\!\mathcal{H}_{X_m}$ is $\mathfrak{N}^{\otimes k}$-invariant. Let $\mathfrak{N}^{\otimes k}_{X_m}$ be the restriction  of $\mathfrak{N}^{\otimes k}$ to $\,^k\!\mathcal{H}_{X_m}$.  Here $m\leq k$ and we consider $X_m=\{1,2\ldots,m \}$ as a subset of $X_k$. Clearly, $\,^k\!\mathcal{H}_{X_m}$ is invariant under the operators $\;^k\!\mathcal{P}(s)$, where $s\in\mathfrak{S}_m\subset\mathfrak{S}_k$, and, more generally,
\begin{eqnarray}\label{imprimitivity_system}
\;^k\!\mathcal{P}(s)\cdot\,^k\!P_{\mathcal{A}}\cdot\;^k\!\mathcal{P}(s^{-1})=
\,^k\!P_{s(\mathcal{A})} \text{ for all } s\in\mathfrak{S}_k \text{ and } \mathcal{A}\in \wp_m(X_k).
\end{eqnarray}
Consider Young subgroup $\mathfrak{S}_{m\,(k-m)}=\left\{s\in\mathfrak{S}_k:sX_m=X_m \right\}$.
Let $s_1$, $s_2$, $\ldots$, $s_r$ be a full set of the representatives in $\mathfrak{S}_k$ of the left cosets $\mathfrak{S}_k\diagup\mathfrak{S}_{m\,(k-m)}$, where $r=|\mathfrak{S}_k\diagup\mathfrak{S}_{m\,(k-m)}|$. Then the projections $\,^k\!P_{s_j(X_m)}$ are pairwise orthogonal and
\begin{eqnarray}\label{K_P_M_decomposition}
\,^k\!P_m=\sum\limits_{j=1}^r\,^k\!P_{s_j(X_m)}.
\end{eqnarray}
 By (\ref{properties}),
\begin{eqnarray}\label{m_homogeneous}
 \mathfrak{N}^{\otimes k}(\theta)\;\;^k\!\mathcal{P}^\mathscr{I}(s)\;\,^k\!P_m= \,^k\!P_m\;\mathfrak{N}^{\otimes k}(\theta)\;\;^k\!\mathcal{P}^\mathscr{I}(s)
\end{eqnarray}
for all  $\theta\in{\rm Aut}\,M$  and  $s\in\mathscr{I}_k$.
 We emphasize again that $\,^k\!P_{X_m}\,\;^k\!\mathcal{P}^\mathscr{I} (\epsilon_j)=0$  for all $j\in X_m$. Therefore,
\begin{eqnarray}
\left( \,^k\!P_{X_m}\,\;^k\!\mathcal{P}^\mathscr{I} (\mathscr{I}_m) \right)^{\prime\prime}=\left( \,^k\!P_{X_m}\,\;^k\!\mathcal{P}^\mathscr{I}(\mathfrak{S}_m) \right)^{\prime\prime}.
\end{eqnarray}
Let $\gamma\in\Upsilon_m$ be an unordered partition of $m$  and let $\chi_\gamma$ be the character of the corresponding irreducible representation of $\mathfrak{S}_m$. Set
\begin{eqnarray}
P^\gamma=\frac{{\rm dim} \,\gamma }{m!}\sum\limits_{s\in\mathfrak{S}_m}\chi_\gamma(s)\;\;^k\!\mathcal{P}^{\mathscr{I}}(s).
\end{eqnarray}
Since the projections $\left\{\,^k\!P_{s_j(X_m)} \right\}_{j=1}^{r}$
are pairwise orthogonal and
\begin{eqnarray*}
\,^k\!P_{X_m}\in \left( \;^k\!\mathcal{P}^\mathscr{I}(\mathfrak{S}_m) \right)^{\prime}~\text{then}~
\,^k\!P_{X_m}^\gamma=P^\gamma\cdot\,^k\!P_{X_m}
\end{eqnarray*} is an orthogonal projection from the center of $w^*$-algebra, generated by the operators $\,^k\!P_{X_m}\mathfrak{N}^{\otimes k}({\rm Aut}\,M)$ and $\,^k\!P_{X_m}\cdot \;^k\!\mathcal{P}^\mathscr{I}(\mathfrak{S}_m) $. Therefore,  the operator
\begin{eqnarray}\label{def_of_P_k_g_m}
\,^k\!P_{m}^\gamma=\sum\limits_{j=1}^{r} \;^k\!\mathcal{P}(s_j)\cdot\,^k\!P_{X_m}^\gamma\cdot\;^k\!\mathcal{P}(s_j^{-1})
\end{eqnarray}  is an orthogonal projection too.  Moreover, the projections $\,^k\!P_{m}^\gamma$  and $\,^k\!P_{m}^{\widetilde{\gamma}}$ are orthogonal for different $\gamma, \widetilde{\gamma}\in\Upsilon_m$ and  the following  equality holds
\begin{eqnarray}\label{m_full_sys_projections}
\,^k\!P_{m}=\sum\limits_{\gamma\in\Upsilon_m}\,^k\!P_{m}^\gamma.
\end{eqnarray}

The next statement follows from theorem \ref{main_Schur-Weyl}.

\begin{Lm}\label{Lemma about restriction}
The family  of the operators  $\left\{\,^k\!P_{X_m} \;\;^k\!\mathcal{P}^{\mathscr{I}}(s)\;  \,^k\!P_{X_m} \right\}_{s\in\mathfrak{S}_m}$  define the unitary representation  $\;^k\!\mathcal{P}^{\mathscr{I}}_{X_m}$ of the group  $\mathfrak{S}_m$  in the subspace $\,^k\!\mathcal{H}_{X_m}$ and one has
$\left( \mathfrak{N}^{\otimes k}_{X_m}({\rm Aut}\,M) \right)^{\prime\prime}=\left( \,^k\!\mathcal{P}^{\mathscr{I}}_{X_m}(\mathfrak{S}_m) \right)^\prime$.
\end{Lm}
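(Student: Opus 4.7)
The plan is to reduce the lemma to Theorem~\ref{main_Schur-Weyl} (applied with $k$ replaced by $m$) via a canonical unitary identification of $\,^k\!\mathcal{H}_{X_m}$ with $L_0^{\otimes m}$.

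First, I would make this identification explicit. Since $\;^k\!\mathcal{P}^{\mathscr{I}}(\epsilon_j)$ acts on the $j$-th tensor factor by $v_j\mapsto{\rm tr}(v_j)\,{\rm I}$, it is the orthogonal projection onto the one-dimensional subspace $\mathbb{C}\,{\rm I}\subset L^2(M,{\rm tr})$ at that slot; accordingly ${\rm I}-\;^k\!\mathcal{P}^{\mathscr{I}}(\epsilon_j)$ is the orthogonal projection onto $L_0$ at that slot. From the defining product for $\,^k\!P_{X_m}$ one concludes
\[
\,^k\!\mathcal{H}_{X_m}\;=\;\underbrace{L_0\otimes\cdots\otimes L_0}_{m}\,\otimes\,\underbrace{\mathbb{C}\,{\rm I}\otimes\cdots\otimes\mathbb{C}\,{\rm I}}_{k-m},
\]
canonically unitarily equivalent to $L_0^{\otimes m}$ via $v_1\otimes\cdots\otimes v_m\otimes{\rm I}^{\otimes(k-m)}\leftrightarrow v_1\otimes\cdots\otimes v_m$.

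Next, I would verify that both families of operators transport correctly under this identification. Since $\theta({\rm I})={\rm I}$ for every $\theta\in{\rm Aut}\,M$, the representation $\mathfrak{N}^{\otimes k}$ preserves each $\mathbb{C}\,{\rm I}$ slot, so $\mathfrak{N}^{\otimes k}_{X_m}$ corresponds to $\mathfrak{N}^{\otimes m}_0$ on $L_0^{\otimes m}$. For $s\in\mathfrak{S}_m$, embedded in $\mathfrak{S}_k$ as the subgroup fixing $X_k\setminus X_m$ pointwise, the operator $\;^k\!\mathcal{P}^{\mathscr{I}}(s)=\;^k\!\mathcal{P}(s)$ shuffles only the first $m$ slots, leaves $\,^k\!\mathcal{H}_{X_m}$ invariant, and is identified with $\;^m\!\mathcal{P}_0(s)$. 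In particular, on $\,^k\!\mathcal{H}_{X_m}$ the compression $\,^k\!P_{X_m}\,\;^k\!\mathcal{P}^{\mathscr{I}}(s)\,\,^k\!P_{X_m}$ coincides with the restriction $\;^k\!\mathcal{P}^{\mathscr{I}}(s)|_{\,^k\!\mathcal{H}_{X_m}}$ and is unitary, so the collection $\{\,^k\!P_{X_m}\,\;^k\!\mathcal{P}^{\mathscr{I}}(s)\,\,^k\!P_{X_m}\}_{s\in\mathfrak{S}_m}$ automatically defines a unitary representation $\,^k\!\mathcal{P}^{\mathscr{I}}_{X_m}$ of $\mathfrak{S}_m$ on $\,^k\!\mathcal{H}_{X_m}$.

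Finally, I would apply Theorem~\ref{main_Schur-Weyl} with $k$ replaced by $m$; it gives $(\mathfrak{N}^{\otimes m}_0({\rm Aut}\,M))'=(\;^m\!\mathcal{P}_0(\mathfrak{S}_m))''$. Taking commutants of both sides and invoking the double commutant theorem yields $(\mathfrak{N}^{\otimes m}_0({\rm Aut}\,M))''=(\;^m\!\mathcal{P}_0(\mathfrak{S}_m))'$, which, transported back through the unitary identification, is exactly the claimed equality. I do not foresee a genuine obstacle: the lemma is essentially a change-of-index-set repackaging of Theorem~\ref{main_Schur-Weyl}. The only care point is the ${\rm Aut}\,M$-equivariance of the identification $\,^k\!\mathcal{H}_{X_m}\cong L_0^{\otimes m}$, which rests on the unital property $\mathfrak{N}(\theta){\rm I}={\rm I}$ of every automorphism, together with the fact that $s\in\mathfrak{S}_m$ only permutes the slots indexed by $X_m$ and so commutes with the projection $\,^k\!P_{X_m}$.
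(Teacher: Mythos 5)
Your proposal is correct and matches the paper's intent exactly: the paper merely asserts that the lemma ``follows from Theorem~\ref{main_Schur-Weyl},'' and the route it has in mind is precisely your unitary identification of $\,^k\!\mathcal{H}_{X_m}$ with $L_0^{\otimes m}$ (the $\mathbb{C}\,{\rm I}$ slots being inert under both $\mathfrak{N}^{\otimes k}$ and $\mathfrak{S}_m$), followed by an application of that theorem with $k$ replaced by $m$ and a passage to commutants. You have simply supplied the routine verification that the paper omits.
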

Define the representation $\,^k\!\Pi$ of the semigroup $({\rm Aut}\,M)\times \mathscr{I}_k$ as follows
\begin{eqnarray}
\,^k\!\Pi(\theta,s)=\mathfrak{N}^{\otimes k}(\theta)\cdot\;^k\!\mathcal{P}^{\mathscr{I}}(s), \text{ where } \theta\in{\rm Aut}\,M, s\in\mathscr{I}_k.
\end{eqnarray}
\begin{Lm}
Projection $\,^k\!P_{m}^\gamma$ belongs to $w^*$-algebra $\left( \,^k\!\Pi\left( ({\rm Aut}\,M)\times \mathscr{I}_k \right) \right)^\prime$ and the restriction of $\,^k\!\Pi$ to the subspace $\,^k\!P_{m}^\gamma\,L^2\left( M^{\otimes k},{\rm tr}^{\otimes k} \right)$ is the irreducible representation of the semigroup $({\rm Aut}\,M)\times \mathscr{I}_k$.
\end{Lm}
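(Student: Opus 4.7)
The plan has two parts: first to show that $\,^k\!P_m^\gamma$ lies in the commutant of $\,^k\!\Pi\bigl(({\rm Aut}\,M)\times\mathscr{I}_k\bigr)$, and second to show that the restriction of $\,^k\!\Pi$ to $\,^k\!P_m^\gamma L^2\bigl(M^{\otimes k},{\rm tr}^{\otimes k}\bigr)$ is irreducible.

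For the first part, commutation with $\mathfrak{N}^{\otimes k}({\rm Aut}\,M)$ is automatic from Proposition \ref{compare_bicommutant_inverse}, since by the definition \eqref{def_of_P_k_g_m} the projection $\,^k\!P_m^\gamma$ is assembled from $\;^k\!\mathcal{P}(s_j)$, the Young projection $P^\gamma$, and the projection $\,^k\!P_{X_m}$, all of which lie in $\;^k\!\mathcal{P}^\mathscr{I}(\mathscr{I}_k)''$. It then suffices, by Theorem \ref{Popova_Th}, to verify that $\,^k\!P_m^\gamma$ commutes with each $\;^k\!\mathcal{P}(s)$, $s\in\mathfrak{S}_k$, and with each $\;^k\!\mathcal{P}^\mathscr{I}(\epsilon_j)$. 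A coset-by-coset rewriting of \eqref{def_of_P_k_g_m} (using the centrality of $P^\gamma$ in $\;^k\!\mathcal{P}^\mathscr{I}(\mathfrak{S}_m)''$ and the fact that $\;^k\!\mathcal{P}(u)$ commutes with $P^\gamma$ for every $u\in\mathfrak{S}_k$ fixing $X_m$ pointwise) gives the intrinsic expansion
\begin{equation*}
\,^k\!P_m^\gamma=\sum_{\mathcal{A}\in\wp_m(X_k)} P^\gamma_\mathcal{A}\cdot\,^k\!P_\mathcal{A},
\end{equation*}
where $P^\gamma_\mathcal{A}=\;^k\!\mathcal{P}(s)\cdot P^\gamma\cdot\;^k\!\mathcal{P}(s^{-1})$ for any $s\in\mathfrak{S}_k$ with $s(X_m)=\mathcal{A}$ is the Young projection for $\gamma$ attached to the stabiliser $\mathfrak{S}_\mathcal{A}$ of $\mathcal{A}$. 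Commutation with $\;^k\!\mathcal{P}(s)$ then follows from the imprimitivity relation \eqref{imprimitivity_system}, which permutes the summands via $\mathcal{A}\mapsto s(\mathcal{A})$ and carries $P^\gamma_\mathcal{A}$ to $P^\gamma_{s(\mathcal{A})}$. Commutation with $\;^k\!\mathcal{P}^\mathscr{I}(\epsilon_j)$ is immediate from \eqref{properties}: on each $\,^k\!P_\mathcal{A} L^2$ the operator $\;^k\!\mathcal{P}^\mathscr{I}(\epsilon_j)$ is $0$ if $j\in\mathcal{A}$ and the identity if $j\notin\mathcal{A}$, and in either case it commutes with $P^\gamma_\mathcal{A}$.

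For irreducibility, identify $\,^k\!P_\mathcal{A} L^2\bigl(M^{\otimes k},{\rm tr}^{\otimes k}\bigr)$ isometrically with $L_0^{\otimes m}$ by inserting ${\rm I}$ at the positions of $X_k\setminus\mathcal{A}$; this isometry carries $P^\gamma_\mathcal{A}\cdot\,^k\!P_\mathcal{A} L^2$ onto $P^\gamma L_0^{\otimes m}$. By the Corollary to Theorem \ref{main_Schur-Weyl}, $P^\gamma L_0^{\otimes m}$ is an irreducible ${\rm Aut}\,M\times\mathfrak{S}_m$-module and therefore factors canonically as $H_\gamma\otimes V_\gamma$, where $H_\gamma$ is an irreducible ${\rm Aut}\,M$-module and $V_\gamma$ is the irreducible $\mathfrak{S}_m$-module of shape $\gamma$. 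Summing over $\mathcal{A}\in\wp_m(X_k)$ gives
\begin{equation*}
\,^k\!P_m^\gamma L^2\bigl(M^{\otimes k},{\rm tr}^{\otimes k}\bigr)\;\cong\;H_\gamma\otimes\bigl(\mathbb{C}^{\binom{k}{m}}\otimes V_\gamma\bigr),
\end{equation*}
so the commutant of $\mathfrak{N}^{\otimes k}({\rm Aut}\,M)$ restricted to this space is the full matrix algebra on the multiplicity space $\mathbb{C}^{\binom{k}{m}}\otimes V_\gamma$. By Theorem \ref{Inverse_Schur-Weyl} this commutant coincides with $\;^k\!\mathcal{P}^\mathscr{I}(\mathscr{I}_k)''\cdot\,^k\!P_m^\gamma$, so $\,^k\!\Pi$ jointly generates $B(H_\gamma)\otimes B\bigl(\mathbb{C}^{\binom{k}{m}}\otimes V_\gamma\bigr)=B\bigl(\,^k\!P_m^\gamma L^2\bigr)$ by the double commutant theorem, which is the desired irreducibility.

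The main obstacle is matching the classical Schur-Weyl bookkeeping on each $\,^k\!P_\mathcal{A} L^2$ with the combinatorics of $\mathscr{I}_k$ acting across the different $\mathcal{A}$'s, so that $\;^k\!\mathcal{P}^\mathscr{I}(\mathscr{I}_k)''\cdot\,^k\!P_m^\gamma$ is seen to exhaust the full matrix algebra on the multiplicity space. Equivalently, one must verify that $\,^k\!P_m^\gamma$ is a minimal central projection in $\;^k\!\mathcal{P}^\mathscr{I}(\mathscr{I}_k)''$, corresponding to the central idempotent of the $(m,\gamma)$-block in the Munn decomposition of Theorem \ref{Munn_th}; once this identification is secured, the double commutant step delivers irreducibility at once.
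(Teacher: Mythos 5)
Your argument for the commutant statement is essentially the paper's: the paper likewise verifies $\;^k\!\mathcal{P}(t)\,^k\!P_m^\gamma\,^k\!\mathcal{P}(t^{-1})=\,^k\!P_m^\gamma$ by the coset bijection $\mathfrak{b}_t$ and then checks commutation with the $\;^k\!\mathcal{P}^\mathscr{I}(\epsilon_i)$ via \eqref{properties} and \eqref{imprimitivity_system}, before invoking Proposition \ref{compare_bicommutant_inverse}; your intrinsic rewriting $\,^k\!P_m^\gamma=\sum_{\mathcal{A}}P^\gamma_\mathcal{A}\,^k\!P_\mathcal{A}$ is a clean repackaging of the same computation and is correct (the well-definedness of $P^\gamma_\mathcal{A}$ does use both centrality of $P^\gamma$ under $\mathfrak{S}_m$ and commutation with permutations of $X_k\setminus X_m$, as you note). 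For irreducibility, however, you take a genuinely different route. The paper stays local: it observes that $\,^k\!P_{X_m}^\gamma\in\,^k\!\Pi_m^\gamma(\cdots)''$, compresses an arbitrary commutant element $B'$ to the corner $\,^k\!P_{X_m}^\gamma$, applies Lemma \ref{Lemma about restriction} (i.e.\ the finite Schur--Weyl duality for $\mathfrak{S}_m$ on $\,^k\!\mathcal{H}_{X_m}$) to conclude $B'\,^k\!P_{X_m}^\gamma=c\,^k\!P_{X_m}^\gamma$, and then propagates the scalar over the coset sum \eqref{def_of_P_k_g_m}. You instead decompose $\,^k\!P_m^\gamma L^2$ globally as $H_\gamma\otimes\bigl(\mathbb{C}^{\binom{k}{m}}\otimes V_\gamma\bigr)$ and appeal to Theorem \ref{Inverse_Schur-Weyl} to identify the $\mathfrak{N}$-commutant with $\;^k\!\mathcal{P}^\mathscr{I}(\mathscr{I}_k)''\,^k\!P_m^\gamma$. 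That closes the argument, but it rests on a forward reference: in the paper this Lemma precedes the proof of Theorem \ref{Inverse_Schur-Weyl}. Since that theorem's proof uses only Lemma \ref{Lemma about restriction}, \eqref{m_full_sys_projections} and the corner decomposition over $\wp_m(X_k)$ --- not the present Lemma --- your route is not strictly circular and could be salvaged by reordering, but as written it proves the Lemma from a stronger result established later, whereas the paper's corner argument needs only the already-available $\mathfrak{S}_m$-duality. Your closing remark about verifying minimality of $\,^k\!P_m^\gamma$ as a central projection via the Munn decomposition correctly identifies what a self-contained version of your global approach would still have to supply; the paper's compression to $\,^k\!P_{X_m}^\gamma$ is precisely the device that makes that bookkeeping unnecessary.
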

\begin{proof}
Let us prove that
\begin{eqnarray}\label{first_statement}
\,^k\!P_{m}^\gamma\in \left( \,^k\!\Pi\left( ({\rm Aut}\,M)\times \mathscr{I}_k \right) \right)^\prime    \;\;\;\;\;\;\;\;\;\;\;\;\; (\text{see (\ref{def_of_P_k_g_m})}).
\end{eqnarray}
\emph{}Each $t\in\mathfrak{S}_k$ defines the bijection $\mathfrak{b}_t$ of the set $\left\{s_1, s_2, \ldots,s_r  \right\}$, where $r=|\mathfrak{S}_k\diagup\mathfrak{S}_{m\,(k-m)}|$, as follows
\begin{eqnarray*}
\mathfrak{b}_t(s_j)=s_{j_t}, \text{ where } ts_j\in s_{j_t}\mathfrak{S}_{m\,(k-m)}.
\end{eqnarray*}
Hence, since $\,^k\!P_{m}^\gamma=\sum\limits_{j=1}^{|\mathfrak{S}_k\diagup\mathfrak{S}_{m\,(k-m)}|} \;^k\!\mathcal{P}(s_j)\cdot\,^k\!P_{X_m}^\gamma\cdot\;^k\!\mathcal{P}(s_j^{-1})$, then
\begin{align*}
\;^k\!\mathcal{P}(t)\cdot\,^k\!P_{m}^\gamma\cdot \;^k\!\mathcal{P}(t^{-1})=\sum\limits_{j=1}^{|\mathfrak{S}_k\diagup\mathfrak{S}_{m\,(k-m)}|} \;^k\!\mathcal{P}(ts_j)\cdot\,^k\!P_{X_m}^\gamma\cdot\;^k\!\mathcal{P}(s_j^{-1}t^{-1})\\
\sum\limits_{j=1}^{|\mathfrak{S}_k\diagup\mathfrak{S}_{m\,(k-m)}|} \;^k\!\mathcal{P}(\mathfrak{b}_t(s_j)\;h_j)\cdot\,^k\!P_{X_m}^\gamma\cdot\;^k\!\mathcal{P}\left(h_j^{-1}\; (\mathfrak{b}_t(s_j))^{-1}\right), \text{ where } h_j\in \mathfrak{S}_m.
\end{align*}
Now, using the equality $\;^k\!\mathcal{P}(h_j)\cdot\,^k\!P_{X_m}^\gamma\cdot\;^k\!\mathcal{P}\left(h_j^{-1} \right)=\,^k\!P_{X_m}^\gamma$, we obtain
\begin{eqnarray*}
\;^k\!\mathcal{P}(t)\cdot\,^k\!P_{m}^\gamma\cdot \;^k\!\mathcal{P}(t^{-1})=\sum\limits_{j=1}^{|\mathfrak{S}_k\diagup\mathfrak{S}_m|} \;^k\!\mathcal{P}(\mathfrak{b}_t(s_j))\cdot\,^k\!P_{X_m}^\gamma\cdot\;^k\!\mathcal{P}\left( (\mathfrak{b}_t(s_j))^{-1}\right).
\end{eqnarray*}
Since $\mathfrak{b}_t$ is the bijection, then
\begin{align*}
\sum\limits_{j=1}^{|\mathfrak{S}_k\diagup\mathfrak{S}_m|} \;^k\!\mathcal{P}(\mathfrak{b}_t(s_j))\cdot\,^k\!P_{X_m}^\gamma\cdot\;^k\!\mathcal{P}\left( (\mathfrak{b}_t(s_j))^{-1}\right)\\
=\sum\limits_{j=1}^{|\mathfrak{S}_k\diagup\mathfrak{S}_{m\,(k-m)}|} \;^k\!\mathcal{P}(s_j)\cdot\,^k\!P_{X_m}^\gamma\cdot\;^k\!\mathcal{P}\left( s_j^{-1}\right).
\end{align*}
Thus
\begin{eqnarray}\label{Commutant of S_k}
\;^k\!\mathcal{P}(t)\cdot\,^k\!P_{m}^\gamma\cdot \;^k\!\mathcal{P}(t^{-1})=\,^k\!P_{m}^\gamma \text{ for all } t\in\mathfrak{S}_k.
\end{eqnarray}
Set $\mathcal{A}_i$ $=\left\{j\in \{1,2,\ldots, |\mathfrak{S}_k\diagup\mathfrak{S}_{m\,(k-m)}|\} :s_j^{-1}(i)\notin X_m \right\}$. Since \newline $\,^k\!P_{X_m}^\gamma=P^\gamma\cdot\,^k\!P_{X_m} $ $=\,^k\!P_{X_m}\cdot P^\gamma$, then, using (\ref{properties}) and (\ref{imprimitivity_system}), we have $$\;^k\!\mathcal{P}^\mathscr{I}(\epsilon_i)\cdot\,^k\!P_{m}^\gamma=\,^k\!P_{m}^\gamma\cdot
\;^k\!\mathcal{P}^\mathscr{I}(\epsilon_i)= \sum\limits_{j\in\mathcal{A}_i}\;^k\!\mathcal{P}(s_j)\cdot\,^k\!P_{X_m}^\gamma\cdot\;^k\!\mathcal{P}\left( s_j^{-1}\right).$$
Now we conclude from (\ref{Commutant of S_k}) that $\,^k\!P_{m}^\gamma\in\;^k\!\mathcal{P}^\mathscr{I}\left(  \mathscr{I}_k \right)^\prime$. Hence, applying Proposition \ref{compare_bicommutant_inverse}, we obtain (\ref{first_statement}).

Therefore, the operators $
\,^k\!\Pi_m^\gamma(\theta,s)=\,^k\!P_{m}^\gamma\cdot\,^k\!\Pi(\theta,s)$, where $\theta\in{\rm Aut}\,M$, $s\in\mathscr{I}_k$, define $*$-representation of semigroup ${\rm Aut}\,M\times\mathscr{I}_k$.

Let us prove that $\,^k\!\Pi_m^\gamma$ is an irreducible representation; i. e.
\begin{align*}
\,^k\!\Pi_m^\gamma\left({\rm Aut}\,M\times\mathscr{I}_k\right)^\prime=\mathbb{C}\cdot\,^k\!P_{m}^\gamma.
\end{align*}
First, we notice that $\,^k\!P_{X_m}^\gamma\in \,^k\!P_{m}^\gamma\cdot\;^k\!\mathcal{P}^\mathscr{I}\left(  \mathscr{I}_k \right)^{\prime\prime}\subset \,^k\!\Pi_m^\gamma\left({\rm Aut}\,M\times\mathscr{I}_k\right)^{\prime\prime}$. Therefore, if $B^\prime\in\,^k\!\Pi_m^\gamma\left({\rm Aut}\,M\times\mathscr{I}_k\right)^{\prime}$ then
\begin{eqnarray*}
B^\prime\cdot \,^k\!P_{X_m}^\gamma\in \,^k\!P_{X_m}^\gamma\cdot\,^k\!\Pi_m^\gamma\left({\rm Aut}\,M\times\mathscr{I}_k\right)^{\prime}\cdot\,^k\!P_{X_m}^\gamma.
\end{eqnarray*}
Hence, applying Lemma \ref{Lemma about restriction}, we see that
\begin{eqnarray*}
B^\prime\cdot \,^k\!P_{X_m}^\gamma=c\cdot\,^k\!P_{X_m}^\gamma, \text{ where } c\in \mathbb{C}.
\end{eqnarray*}
Now, using (\ref{def_of_P_k_g_m}), we obtain $B^\prime=B^\prime\cdot \,^k\!P_{m}^\gamma=c\cdot\,^k\!P_{m}^\gamma$.
\end{proof}

\subsection{The proof of Theorem \ref{Inverse_Schur-Weyl}.}
Let $B^\prime$ lies in
$\left( \mathfrak{N}^{\otimes k}\left( {\rm Aut}\,M \right) \right)^{\prime}$. For the matrix $\,^\theta\!U=\left[ \,^\theta\!U_{\mathbf{i}_J\,\mathbf{i}_J^\prime} \right]$ (see (\ref{U_theta_formula})), we denote by $\,^\theta\!\mathbf{U}$ an element from $M_{1J}$ of the view
\begin{eqnarray*}
\,^\theta\!\mathbf{U}=\sum\limits_{\mathfrak{i}_J,\mathfrak{i}^\prime_J\in\mathfrak{I}_J}\;
\,^\theta\!U_{\mathbf{i}_J\,\mathbf{i}_J^\prime}\cdot\mathfrak{e}_{\mathbf{i}_J\,\mathbf{i}_J^\prime}.
\end{eqnarray*}
Let $a\in M_{1J}\cap\mathfrak{A}$.
Using (\ref{Operator_unistochastic})  and (\ref{matrix_U_unistoh}), we obtain
\begin{eqnarray*}
\,^k\!E\circ\mathfrak{N}^{\otimes k}\!({\rm Ad}\,\,^\theta\!\mathbf{U})(\,^k\!P_m(a))=\left(1-\frac{|\theta-1|^2}{n}\right)^m\,^k\!P_m(a).
\end{eqnarray*}
It follows that
\begin{align*}
\,^k\!E\circ\mathfrak{N}^{\otimes k}\!({\rm Ad}\,\,^\theta\!\mathbf{U})\circ\,^k\!E\\
=\sum\limits_{j=0}^k\left(1-\frac{|\theta-1|^2}{n}\right)^j\,^k\!E\circ\,^k\!P_j
\in  \left( \mathfrak{N}^{\otimes k}\left( {\rm Aut}\,M \right) \right)^{\prime\prime}.
\end{align*}
Therefore,
\begin{eqnarray*}
\sum\limits_{j=0}^k\left(1-\frac{|\theta-1|^2}{n}\right)^jB^\prime\circ\,^k\!E\circ\,^k\!P_j=
\sum\limits_{j=0}^k\left(1-\frac{|\theta-1|^2}{n}\right)^j\,^k\!E\circ\,^k\!P_j\circ B^\prime
\end{eqnarray*}
Hence, thanks to the relation $\,^k\!P_l\circ\,^k\!P_m=\delta_{ml}\,^k\!P_l$, we have
\begin{align*}
\left(1-\frac{|\theta-1|^2}{n}\right)^m\;\,^k\!P_l\circ B^\prime\circ\,^k\!E\circ\,^k\!P_m\\
= \sum\limits_{j=0}^k\left(1-\frac{|\theta-1|^2}{n}\right)^j\;\,^k\!P_l\circ\;^k\!E\circ\,^k\!P_j\circ B^\prime\circ\,^k\!P_m.
\end{align*}
Now we conclude from propositions \ref{limits_of_E_J}  and \ref{compare_bicommutant_inverse} that
\begin{equation*}
\left(1-\frac{|\theta-1|^2}{n}\right)^m\;\,^k\!P_l\circ B^\prime\circ\,^k\!E\circ\,^k\!P_m=\left(1-\frac{|\theta-1|^2}{n}\right)^l\;\,^k\!P_l\circ \,^k\!E\circ B^\prime\circ\,^k\!P_m
\end{equation*} and
\begin{equation*}
\left(1-\frac{|\theta-1|^2}{n}\right)^m\;\,^k\!P_l\circ B^\prime\circ\,^k\!E\circ\,^k\!P_m=\left(1-\frac{|\theta-1|^2}{n}\right)^l\;\,^k\!P_l\circ B^\prime\circ \,^k\!E\circ \,^k\!P_m.
\end{equation*}
Therefore, $\,^k\!P_l\circ B^\prime\circ\,^k\!E\circ\,^k\!P_m=\delta_{lm}\; ^k\!P_m\circ B^\prime\circ\,^k\!E\circ\,^k\!P_m$. Now, using the relation $\sum\limits_{j=0}^k\,^k\!P_j={\rm I}$, we have
\begin{eqnarray*}
B^\prime\circ\,^k\!E=\,^k\!E\circ B^\prime=\sum\limits_{m=0}^k\; ^k\!P_m\circ B^\prime\circ\,^k\!E\circ\,^k\!P_m.
\end{eqnarray*}
Hence, applying corollary \ref{ciclicity_corollary}, we conclude
\begin{eqnarray}
B^\prime=\sum\limits_{m=0}^k\; ^k\!P_m\circ B^\prime\circ\,^k\!P_m.
\end{eqnarray}
 Let us prove that $B^\prime_m\stackrel{\rm def}{=}\; ^k\!P_m\circ B^\prime\circ\,^k\!P_m$ lies in $*$-algebra $^k\!P_m\;^k\!\mathcal{P}^\mathscr{I}(\mathscr{I}_k)^{\prime\prime}\;^k\!P_m$ (see (\ref{m_full_sys_projections}) and lemma \ref{Lemma about restriction}).

 Since $\;^k\!P_m=\sum\limits_{\mathcal{A}\in\wp_m(X_k)} \,^k\!P_\mathcal{A}$, then $B^\prime_m=\sum\limits_{\mathcal{A},\mathcal{B}\in\wp_m(X_k)}\,^k\!P_\mathcal{A}\circ B^\prime_m \circ \,^k\!P_\mathcal{B}$. There exist $s_\mathcal{A}$, $s_\mathcal{B}\in \mathfrak{S}_k$ such that
 \begin{eqnarray}
 s_\mathcal{A}(X_m)=\mathcal{A}\text{ and } s_\mathcal{B}(X_m)=\mathcal{B}.
 \end{eqnarray}
 Hence, using (\ref{imprimitivity_system}), we have
 \begin{align*}
 ^k\!P_\mathcal{A}\circ B^\prime_m \circ \,^k\!P_\mathcal{B}=\,^k\!\mathcal{P}(s_\mathcal{A})\circ\,^k\!P_{X_m}\circ\,^k\!\mathcal{P}(s_\mathcal{A}^{-1})
 \circ B^\prime_m \circ \,^k\!\mathcal{P}(s_\mathcal{B})\circ\,^k\!P_{X_m}\circ\,^k\!\mathcal{P}(s_\mathcal{B}^{-1}).
 \end{align*}
 It follows from lemma \ref{Lemma about restriction} that $\,^k\!P_{X_m}\circ\;^k\!\mathcal{P}(s_\mathcal{A}^{-1})
 \circ B^\prime_m \circ \;^k\!\mathcal{P}(s_\mathcal{B})\circ\,^k\!P_{X_m}$ lies in algebra $\,^k\!P_{X_m}\circ\;^k\!\mathcal{P}(\mathfrak{S}_m)^{\prime\prime}\circ \,^k\!P_{X_m}$. Therefore,
 \begin{eqnarray*}
 \,^k\!P_\mathcal{A}\circ B^\prime_m \circ \,^k\!P_\mathcal{B}\in \left(\,^k\!\mathcal{P}^{\mathscr{I}}(\mathscr{I}_k)\right)^{\prime\prime}.
 \end{eqnarray*}
 Thus $B^\prime=\sum\limits_{m=0}^k\sum\limits_{\mathcal{A},\mathcal{B}\in\wp_m(X_k)}\,^k\!P_\mathcal{A}\circ B^\prime_m \circ \,^k\!P_\mathcal{B}$ lies in $\left(\,^k\!\mathcal{P}^{\mathscr{I}}(\mathscr{I}_k)\right)^{\prime\prime}$. This complites the proof  of Theorem \ref{Inverse_Schur-Weyl}.

 \section{The Schur-Weyl duality for ${\rm Aut}\, M$ and the infinite symmetric group}\label{infinite_Sch_Weyl}
Let $\overline{\mathfrak{S}}_\infty$ be the group of all bijections of the set $\mathbb{Z}_{>0}=\left\{ 1,2,\ldots \right\}$. Set $\mathfrak{S}_n=\left\{s\in \overline{\mathfrak{S}}_\infty: s(k)=k ~\text{for all}~ k>n\right\}$.

Further we will consider $L^2\left( M,{\rm tr} \right)^{\otimes n}$ as the subspace of  $L^2\left( M,{\rm tr} \right)^{\otimes (n+1)}$, using the embedding
\begin{eqnarray*}
L^2\left( M,{\rm tr} \right)^{\otimes n}\ni m_1\otimes\ldots\otimes m_n\mapsto m_1\otimes\ldots\otimes m_n\otimes{\rm I}\in L^2\left( M,{\rm tr} \right)^{\otimes (n+1)}.
\end{eqnarray*}
 Let $L^2\left( M,{\rm tr} \right)^{\otimes \infty}$ be the completion of the pre-Hilbert space $\bigcup\limits_{n=1}^\infty L^2\left( M,{\rm tr} \right)^{\otimes n}$. It is convenient to consider $\bigcup\limits_{n=1}^\infty L^2\left( M,{\rm tr} \right)^{\otimes n}$ as the linear span of the vectors $v_1\otimes \cdots\otimes v_n\otimes{\rm I}\otimes{\rm I}\otimes\cdots$, where $v_j\in M$. At the same time, we will to identify  $L^2\left( M,{\rm tr} \right)^{\otimes n}$ with the closure of the linear span of all vectors $v_1\otimes \cdots\otimes v_n\otimes v_{n+1}\otimes\cdots$, where $v_i={\rm I}$ for all $i>n$.
Then the elements $\theta\in{\rm Aut}\, M$ and  $s\in\overline{\mathfrak{S}}_\infty$ act on  $L^2\left( M,{\rm tr} \right)^{\otimes \infty}$ as follows
\begin{eqnarray*}
\mathfrak{N}^{\otimes \infty}(\theta)\left(v_1\otimes\cdots\otimes v_n\otimes \cdots  \right)=\left(
\mathfrak{N}(\theta)v_1 \right)\otimes\cdots\otimes\left(
\mathfrak{N}(\theta)v_n \right)\otimes \cdots;\\
\;^\infty\!\mathcal{P}(s)\left(v_1\otimes \cdots\otimes v_n\otimes\cdots  \right)=v_{s^{-1}(1)}\otimes
\cdots\otimes v_{s^{-1}(n)}\otimes\cdots  .
\end{eqnarray*}
We now have:
\begin{Th}\label{infiniteSW}
$\left\{\mathfrak{N}^{\otimes\infty}\left({\rm Aut}\, M\right) \right\}'=\left\{\,^\infty\!\mathcal{P}\left( \overline{\mathfrak{S}}_\infty \right) \right\}''$.
\end{Th}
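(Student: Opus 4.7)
The inclusion $\{\,^\infty\!\mathcal{P}(\overline{\mathfrak{S}}_\infty)\}''\subset\{\mathfrak{N}^{\otimes\infty}(\operatorname{Aut}\,M)\}'$ is immediate, since $\mathfrak{N}^{\otimes\infty}(\theta)$ and $\,^\infty\!\mathcal{P}(s)$ commute. For the converse, the plan is to reduce to Theorem \ref{Inverse_Schur-Weyl} by compressing to the finite tensor powers, the crucial point being that the generators $\epsilon_i$ of $\mathscr{I}_n$ already arise inside $\{\,^\infty\!\mathcal{P}(\overline{\mathfrak{S}}_\infty)\}''$ as weak-operator limits of far-away transpositions from $\overline{\mathfrak{S}}_\infty$.

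Let $P_n$ denote the orthogonal projection of $L^2(M,\operatorname{tr})^{\otimes\infty}$ onto $L^2(M,\operatorname{tr})^{\otimes n}$, and for each $i\ge 1$ let $Q_i$ be the projection sending $v_1\otimes\cdots\otimes v_i\otimes\cdots$ to $\operatorname{tr}(v_i)\,v_1\otimes\cdots\otimes\mathrm{I}\otimes\cdots$. The $Q_i$ pairwise commute and $P_n=\prod_{i>n}Q_i$ in the strong operator topology. Since $\mathfrak{N}(\theta)\mathrm{I}=\mathrm{I}$ and $\operatorname{tr}\circ\theta=\operatorname{tr}$, each $P_n$ and each $Q_i$ lies in $\{\mathfrak{N}^{\otimes\infty}(\operatorname{Aut}\,M)\}'$. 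The main step is to identify $Q_i$ as a weak-operator limit of actual permutations: let $\sigma_{i,N}\in\overline{\mathfrak{S}}_\infty$ denote the transposition of $i$ and $N$; a direct computation of $\langle\,^\infty\!\mathcal{P}(\sigma_{i,N})u,v\rangle$ on elementary tensors $u,v$ whose nontrivial factors lie among the first $p$ positions shows that, once $N>p$, the value is independent of $N$ and equals $\langle Q_iu,v\rangle$ (the factor $\langle u_i,\mathrm{I}\rangle=\operatorname{tr}(u_i)$ at position $N$ combines with $\langle\mathrm{I},v_i\rangle=\overline{\operatorname{tr}(v_i)}$ at position $i$ to reproduce the matrix coefficient of $Q_i$). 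Density of such tensors and uniform boundedness of the unitaries $\,^\infty\!\mathcal{P}(\sigma_{i,N})$ then yield $Q_i\in\{\,^\infty\!\mathcal{P}(\overline{\mathfrak{S}}_\infty)\}''$, and hence every $P_n$ lies in this algebra too.

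Take now $B'\in\{\mathfrak{N}^{\otimes\infty}(\operatorname{Aut}\,M)\}'$. The compression $P_nB'P_n$ preserves $L^2(M,\operatorname{tr})^{\otimes n}$, and its restriction there commutes with $\mathfrak{N}^{\otimes n}(\operatorname{Aut}\,M)$; by Theorem \ref{Inverse_Schur-Weyl} this restriction belongs to $\{\,^n\!\mathcal{P}^{\mathscr{I}}(\mathscr{I}_n)\}''$. On $L^2(M,\operatorname{tr})^{\otimes n}$ the transpositions in $\mathfrak{S}_n\subset\mathscr{I}_n$ act as the corresponding $\,^\infty\!\mathcal{P}(s)$, while $\,^n\!\mathcal{P}^{\mathscr{I}}(\epsilon_i)$ is the restriction of $Q_i$; by Popova's presentation (Theorem \ref{Popova_Th}) these generate $\mathscr{I}_n$. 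Thus the corner von Neumann algebra $P_n\{\,^\infty\!\mathcal{P}(\overline{\mathfrak{S}}_\infty)\}''P_n$, viewed on $L^2(M,\operatorname{tr})^{\otimes n}$, contains $\{\,^n\!\mathcal{P}^{\mathscr{I}}(\mathscr{I}_n)\}''$, and since $P_n$ itself belongs to $\{\,^\infty\!\mathcal{P}(\overline{\mathfrak{S}}_\infty)\}''$ we obtain $P_nB'P_n\in\{\,^\infty\!\mathcal{P}(\overline{\mathfrak{S}}_\infty)\}''$ for every $n$. Finally, $P_n\to\mathrm{I}$ in the strong operator topology and $\|P_nB'P_n\|\le\|B'\|$, so $P_nB'P_n\to B'$ strongly; the target algebra being strong-operator closed, we conclude $B'\in\{\,^\infty\!\mathcal{P}(\overline{\mathfrak{S}}_\infty)\}''$. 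The main obstacle is the weak-operator identification of $Q_i$ as $\lim_N\,^\infty\!\mathcal{P}(\sigma_{i,N})$, which is the conceptual bridge turning the inverse-semigroup generators used at the finite level into genuine limits of permutations in the infinite setting; once this is in hand, the compression argument is essentially routine.
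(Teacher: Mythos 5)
Your proposal is correct and follows essentially the same route as the paper: the paper's operators $O_m$ are exactly your $Q_i$, obtained there too as weak-operator limits of the transpositions $(m\;l)$ as $l\to\infty$, the projection $\mathfrak{P}_n$ onto $L^2(M,\mathrm{tr})^{\otimes n}$ is your $P_n=\prod_{i>n}Q_i$, and the paper likewise compresses $B'$ by $\mathfrak{P}_n$, invokes Theorem \ref{Inverse_Schur-Weyl} on the finite tensor power, and passes to the strong-operator limit. No substantive difference.
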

\begin{proof}
Let $(k\;l)$ be a transposition that swaps $k$ and $l$. We denote by $\overline{\mathfrak{S}}_{n,\infty}$ the subgroup $\left\{s\in \overline{\mathfrak{S}}_\infty: s(k)=k ~\text{for all}~k\in\left\{1,2,\ldots,n \right\}\right\}$.

Let us prove that
\begin{eqnarray}\label{coincides_subspaces}
L^2\left( M,{\rm tr} \right)^{\otimes n}=\left\{v\in L^2\left( M,{\rm tr} \right)^{\otimes \infty}: \;^\infty\!\mathcal{P}(s)v=v ~\text{for all }~ s\in\overline{\mathfrak{S}}_{n,\infty}\right\}.
\end{eqnarray}
Fix any $\mathbf{v}\in L^2\left( M,{\rm tr} \right)^{\otimes \infty}$ such that  $\;^\infty\!\mathcal{P}(s)\mathbf{v}=\mathbf{v}$ for all $s\in\overline{\mathfrak{S}}_{n,\infty}$.

Take orthonormal basis $\left\{e_k \right\}_{k=0}^\infty$ in $L^2\left( M,{\rm tr} \right)$, where $e_0={\rm I}$ and $e_k\in M$ for all $k$. Denote by $\mathfrak{K}$ a set of all sequences $\mathfrak{k}=\left\{k_i \right\}_{i=1}^\infty$, $k_i\in\{0,1,\ldots\}$ with the property: there exists same natural $N(\mathfrak{k})$ such, that $k_i=0$ for all $i> N(\mathfrak{k})$.
For convenience, we set  $N(\mathfrak{k})=\min\left\{m: k_i=0~\text{ for all }~ i>m \right\}$.
Then the set $\left\{\mathbf{e}_\mathfrak{k}=e_{k_1}\otimes e_{k_2}\otimes \ldots e_{k_{N(\mathfrak{k})}}\otimes {\rm I}\otimes{\rm I}\otimes\ldots\right\}_{\mathfrak{k}\in\mathfrak{K}}$ is an orhonormal basis in $L^2\left( M,{\rm tr} \right)^{\otimes \infty}$.
Set $$\mathbf{v}=\sum\limits_{\mathfrak{k}\in\mathfrak{K}} c_\mathfrak{k}(\mathbf{v})\mathbf{e}_\mathfrak{k}~\text{ where }~ c_\mathfrak{k}(\mathbf{v})\in\mathbb{C}.$$
To prove (\ref{coincides_subspaces}) it is sufficient to establish that $c_\mathfrak{k}(\mathbf{v})=0$ if  $N(\mathfrak{k})>n$.

Consider an orthogonal projection $O_m$  in  $L^2\left( M,{\rm tr} \right)^{\otimes \infty}$ that is defined as follows
\begin{eqnarray}\label{asym_transposition}
\begin{aligned}
O_m\left(\ldots\otimes e_{k_{m-1}}\otimes e_{k_m}\otimes e_{k_{m+1}}\otimes \ldots e_{k_{N(\mathfrak{k})}}\otimes {\rm I}\otimes{\rm I}\otimes\ldots \right)\\
={\rm tr}(e_{k_m})\left(\ldots\otimes e_{k_{m-1}}\otimes{\rm I}\otimes e_{k_{m+1}}\otimes \ldots e_{k_{N(\mathfrak{k})}}\otimes {\rm I}\otimes{\rm I}\otimes\ldots \right).
\end{aligned}
\end{eqnarray}
It is easily seen that the sequence $\left\{\,^\infty\!\mathcal{P}((m\;l)) \right\}_{l=1}^\infty$ converges in the week operator topology to $O_m=\text{w}-\lim\limits_{l\to\infty}\,^\infty\!\mathcal{P}((m\;l))$. Therefore,
\begin{eqnarray}\label{O_n_in_commutant}
O_m\in\left(\,^\infty\!\mathcal{P}(\overline{\mathfrak{S}}_{\infty})\right)^{''}~\text{ for all }~ m, ~\text{ and }~ O_m\mathbf{v}=\mathbf{v} ~\text{ for all }~ m>n.
\end{eqnarray}
Hence, applying (\ref{asym_transposition}), we have $c_\mathfrak{k}(\mathbf{v})=0$ for all $\mathfrak{k}$ such that $N(\mathfrak{k})>n$. This proves equality (\ref{coincides_subspaces}).

According to (\ref{asym_transposition}), we have that the operator $\mathfrak{P}_{n,N}=O_{n+1}O_{n+2}\cdots O_N$, where $N>n$ is an orthogonal projection. Since $\mathfrak{P}_{n,m}\geq \mathfrak{P}_{n,m+1}$ for all $m>n$, there exists the orthogonal projection $\mathfrak{P}_n=\lim\limits_{m\to\infty}\mathfrak{P}_{n,m}$. By (\ref{O_n_in_commutant}), $\mathfrak{P}_n$ belongs to $\left(\,^\infty\!\mathcal{P}(\overline{\mathfrak{S}}_{n,\infty})\right)^{''}$. Using (\ref{asym_transposition}), we obtain
\begin{eqnarray}\label{n-fixed_projection}
\begin{aligned}
\mathfrak{P}_n\left(v_1\otimes v_2\otimes\otimes\ldots\otimes v_n\otimes v_{n+1}\otimes\ldots \otimes v_j\otimes\ldots \right)\\=\left(\prod\limits_{j=n+1}^\infty {\rm tr}(v_j) \right)\left(v_1\otimes v_2\otimes\ldots\otimes v_n\otimes {\rm I}\otimes\ldots \otimes {\rm I}\otimes\ldots \right).
\end{aligned}
\end{eqnarray}
Therefore, $\mathfrak{P}_n\left( L^2\left( M,{\rm tr} \right)^{\otimes \infty} \right)= L^2\left( M,{\rm tr} \right)^{\otimes n}$.

Take operator $B'\in \left\{\mathfrak{N}^{\otimes\infty}\left({\rm Aut}\, M\right) \right\}'$. Since projection  $\mathfrak{P}_n\in\left(\,^\infty\!\mathcal{P}(\overline{\mathfrak{S}}_{n,\infty})\right)^{''}$ and $~\left(\,^\infty\!\mathcal{P}(\overline{\mathfrak{S}}_{n,\infty})\right)^{''}\subset \left\{\mathfrak{N}^{\otimes\infty}\left({\rm Aut}\, M\right) \right\}'$, then operator $~B'_n=\mathfrak{P}_n\,B'\,\mathfrak{P}_n~$  be\-longs $\left\{\mathfrak{N}^{\otimes\infty}\left({\rm Aut}\, M\right) \right\}'$, too.
 It follows from section \ref{finite_inverse_semigroup} that
 \begin{eqnarray*}
 \mathfrak{P}_n\,\mathfrak{N}^{\otimes\infty}(\theta)\,\mathfrak{P}_n=\mathfrak{N}^{\otimes n}(\theta),\;\;\theta\in{\rm Aut}\, M,\\
  \mathfrak{P}_n \,^\infty\!\mathcal{P}(s)\, \mathfrak{P}_n=\,^n\!\mathcal{P}(s),\, \;\; ~\text{for all }~ s\in \mathfrak{S}_n,\\
  \mathfrak{P}_n\,O_i\, \mathfrak{P}_n =\,^k\!\mathcal{P}^\mathscr{I}(\epsilon_i),\;\; i=1,2,\ldots,n.
 \end{eqnarray*}
 Hence, applying Theorem \ref{Inverse_Schur-Weyl}, we obtain that $B_n'$ belongs to $\left(\,^\infty\!\mathcal{P}(\overline{\mathfrak{S}}_{\infty})\right)^{''}$ (see (\ref{O_n_in_commutant})).
 Since $B'= \lim\limits_{n\to\infty}$ in the strong operator topology, operator $B'$ lies in $\left(\,^\infty\!\mathcal{P}(\overline{\mathfrak{S}}_{\infty})\right)^{''}$, too. This complites the proof of Theorem \ref{infiniteSW}.
\end{proof}
\section{A mapping from unitary to doubly stochastic matrices}\label{U_to_DS}

Recall that $n\times n$-matrix $P=\left[ P_{ij} \right]$ is called {\it doubly stochastic} if $\sum\limits_{i=1}^n P_{ij}=1$, $\sum\limits_{j=1}^n P_{ij}=1$ and $P_{ij}\geq 0$ for all $i,j$. The property of $P$ being doubly stochastic is obviously equivalent to the vector $\left(\begin{smallmatrix}1\\ 1\\ \vdots\\ 1\end{smallmatrix}\right)$ being invariant both for $P$ and the transpose $P^t$. Let $\mathcal{DS}_n$ stand for the set of all doubly stochastic $n\times n$ matrices. There exists an orthogonal matrix $O=\left[ O_{ij} \right]$ such that for any $P\in\mathcal{DS}_n$ one has $\left(OPO^{-1}\right)_{1j}=\delta_{1j}$ and $\left(OPO^{-1}\right)_{j1}=\delta_{j1}$  $(j=1,2,\ldots,n)$, where $\delta_{kl}$ is the Kronecker delta. Let us fix such matrix $O$.

\begin{Lm}Let $\,^1_\gamma\mathbb{M}_n(\mathbb{R})$ be the set of all real $n\times n$ matrices of the form
$\left[ \begin{matrix}\gamma&0&0&\cdots&0\\0&a_{22}&a_{23}&\cdots&a_{2n}\\0&a_{32}&a_{33}&\cdots&a_{3n}\\ \vdots&\vdots&\vdots&\cdots&\vdots
\end{matrix} \right]$.
Suppose that a doubly stochastic matrix $P=\left[ P_{ij} \right]$ has only nonzero entries. Then there exists $\kappa>0$ such that the matrix $P+O^{-1}BO$ is  doubly stochastic for any matrix $B=\left[ B_{ij} \right]\in \,^1_0\mathbb{M}_n(\mathbb{R})$ such that $\left|B_{ij} \right|<\kappa$ for all $i,j$.
\end{Lm}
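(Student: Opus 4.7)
The plan is to verify the two defining conditions of doubly stochastic matrices separately: (i) preservation of the row and column sums, and (ii) entrywise non-negativity. The first is an algebraic identity forced by the structural property of the fixed orthogonal matrix $O$, while the second is an elementary perturbation bound that uses the strict positivity of all entries of $P$.

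First I would extract the key identity $O\mathbf{1}=\sqrt{n}\,e_{1}$, where $\mathbf{1}=(1,1,\ldots,1)^{t}$. The defining property of $O$, namely that $(OPO^{-1})_{j1}=\delta_{j1}$ and $(OPO^{-1})_{1j}=\delta_{1j}$ hold for every $P\in\mathcal{DS}_{n}$, translates into $O^{-1}e_{1}$ being a common $1$-eigenvector of every $P\in\mathcal{DS}_{n}$ and of every $P^{t}$. Since the only vectors that are simultaneously fixed by all doubly stochastic matrices are scalar multiples of $\mathbf{1}$, we get $O^{-1}e_{1}\in\mathbb{R}\,\mathbf{1}$; unit-norm of columns of $O$ then pins it down to $O^{-1}e_{1}=n^{-1/2}\mathbf{1}$, i.e., $O\mathbf{1}=\sqrt{n}\,e_{1}$.

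Next I would verify that $Q:=P+O^{-1}BO$ preserves row and column sums. Using $O\mathbf{1}=\sqrt{n}\,e_{1}$ and vanishing of the first column of $B$, we have $BO\mathbf{1}=\sqrt{n}\,Be_{1}=0$, and hence $Q\mathbf{1}=P\mathbf{1}=\mathbf{1}$. Analogously, $B^{t}e_{1}=0$ (because the first row of $B$ vanishes) yields $Q^{t}\mathbf{1}=\mathbf{1}$, so the row and column sums are both identically $1$.

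Finally, set $\epsilon:=\min_{i,j}P_{ij}>0$; this minimum is attained and strictly positive by hypothesis. A crude estimate gives $|(O^{-1}BO)_{ij}|\leq n^{2}\bigl(\max_{k,l}|O_{kl}|\bigr)^{2}\cdot\max_{k,l}|B_{kl}|$, so choosing $\kappa$ strictly less than $\epsilon$ divided by $n^{2}\bigl(\max_{k,l}|O_{kl}|\bigr)^{2}$ forces $|(O^{-1}BO)_{ij}|<\epsilon$ for all $i,j$ and hence $Q_{ij}\geq P_{ij}-\epsilon\geq 0$, completing the verification that $Q\in\mathcal{DS}_{n}$. The whole argument is essentially routine; the only step requiring a moment's reflection is the identification $O\mathbf{1}=\sqrt{n}\,e_{1}$, which is forced by the universal nature of the defining condition on $O$.
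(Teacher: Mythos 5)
Your argument is correct, and in fact the paper states this Lemma without proof (it is treated as a routine verification preceding the tangent-space discussion), so there is no authorial argument to compare against; your write-up fills that gap along the only natural route. The two halves are exactly what is needed: the identity $O^{-1}e_1\in\mathbb{R}\,\mathbf{1}$ follows already from the single doubly stochastic matrix $\frac1n J$ ($J$ the all-ones matrix), whose only fixed vectors are multiples of $\mathbf{1}$, and orthogonality of $O$ gives $O^{-1}e_1=\pm n^{-1/2}\mathbf{1}$ --- the sign ambiguity you gloss over is harmless, since $Be_1=0$ and $B^te_1=0$ kill the perturbation of the row and column sums either way. The positivity estimate is likewise fine (one can even drop the factor $\bigl(\max_{k,l}|O_{kl}|\bigr)^2\le 1$). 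The one strict inequality worth recording is $Q_{ij}\ge P_{ij}-|(O^{-1}BO)_{ij}|>P_{ij}-\epsilon\ge 0$, which is what you intended.
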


By the above Lemma, each double stochastic matrix $P$ with positive entries is an interior point of $\mathcal{DS}_n$, and the real dimension of the tangent space $T_P\,\mathcal{DS}_n$ at this point is $(n-1)^2$. In addition, we have a linear one-to-one map between $T_P\,\mathcal{DS}_n$ and $\,^1_0\mathbb{M}_n(\mathbb{R})$.

We need in the sequel the obvious claim as follows.

\begin{Prop}
Let $\mathcal{U}$ be a open subset in $\mathcal{DS}_n$, and $GL(n,\mathbb{R})$ stand for the group of real invertible $n\times n$ matrices. Identify the group $GL(n-1,\mathbb{R})$ with the subgroup $\left(O^{-1}\cdot\,^1_1\mathbb{M}_n(\mathbb{R})\cdot O\right)\cap GL(n,\mathbb{R})\subset GL(n,\mathbb{R})$. Then the topological component of the identity in $GL(n-1,\mathbb{R})$ is contained in $$\bigcup\limits_{j=1}^\infty\left(\left(\mathcal{U}\cap GL(n,\mathbb{R}) \right)\cdot\left(\mathcal{U}\cap GL(n,\mathbb{R})\right)^{-1}\right)^j.$$
\end{Prop}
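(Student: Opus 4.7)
The plan is to show that the set $V:=(\mathcal{U}\cap GL(n,\mathbb{R}))\cdot(\mathcal{U}\cap GL(n,\mathbb{R}))^{-1}$, which is automatically contained in the group $G:=\{g\in GL(n,\mathbb{R}):g\mathbf{1}=\mathbf{1},\,g^{t}\mathbf{1}=\mathbf{1}\}$ and is symmetric ($V^{-1}=V$, via $(AB^{-1})^{-1}=BA^{-1}$), already contains an open neighborhood of $I$ in $G$. The inclusion $V\subset G$ holds because every $P\in\mathcal{DS}_n$ satisfies $P\mathbf{1}=\mathbf{1}$ and therefore $P^{-1}\mathbf{1}=\mathbf{1}$, and likewise for the transpose. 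Once such a neighborhood is in place, $\bigcup_{j\ge 1}V^{j}$ is an open subgroup of $G$, hence clopen, and so contains the connected component of $I$; under the identification $G\cong GL(n-1,\mathbb{R})$ this is precisely what the proposition asserts.

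To produce the neighborhood, first select an invertible $P_{0}\in\mathcal{U}$ with all entries strictly positive. Such a point exists because the relative interior of $\mathcal{DS}_n$ is dense in $\mathcal{DS}_n$, invertibility is a Zariski-open condition that is not identically vanishing on $\mathcal{DS}_n$ (the identity matrix has determinant one), and $\mathcal{U}$ is open and nonempty. By the lemma preceding the proposition, the tangent space $T_{P_{0}}\mathcal{DS}_n$ is the subspace
$$\mathfrak{g}:=O^{-1}\cdot\,^1_0\mathbb{M}_n(\mathbb{R})\cdot O=\{X\in\mathbb{M}_n(\mathbb{R}):X\mathbf{1}=0,\,X^{t}\mathbf{1}=0\}$$
of real dimension $(n-1)^{2}$, and this same subspace is simultaneously the Lie algebra of $G$ at $I$. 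Now define the smooth map $\varphi:\mathcal{DS}_n\cap GL(n,\mathbb{R})\to G$ by $\varphi(P)=PP_{0}^{-1}$; its value at $P_{0}$ is $I$, and its derivative there is $X\mapsto XP_{0}^{-1}$, a linear bijection of $\mathbb{M}_n(\mathbb{R})$ that sends $\mathfrak{g}$ into itself (using $P_{0}^{-1}\mathbf{1}=\mathbf{1}$ together with its transpose), and is therefore a linear isomorphism $T_{P_{0}}\mathcal{DS}_n\to T_{I}G$. By the inverse function theorem $\varphi$ is a local diffeomorphism at $P_{0}$, so $\varphi(\mathcal{U}\cap GL(n,\mathbb{R}))\subset V$ contains an open neighborhood of $I$ in $G$, as required.

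The principal step to verify carefully is the identification of the tangent space of the polytope $\mathcal{DS}_n$ at an interior point with the Lie algebra of $G$ as the \emph{same} linear subspace of $\mathbb{M}_n(\mathbb{R})$, which ensures that the differential of $\varphi$ actually preserves this subspace; once that is settled the inverse function theorem immediately delivers the desired neighborhood, and the remaining passage to open subgroups and identity components is elementary.
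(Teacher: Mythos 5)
Your proof is correct; note that the paper itself states this Proposition without any proof (it is introduced as ``the obvious claim''), so there is no argument of the authors to compare against. Your route --- choose an invertible $P_0\in\mathcal{U}$ with strictly positive entries, observe that right translation $P\mapsto PP_0^{-1}$ is a local diffeomorphism from the relative interior of $\mathcal{DS}_n$ onto a neighborhood of the identity of $G=\left\{g\in GL(n,\mathbb{R}):g\mathbf{1}=\mathbf{1},\ g^{t}\mathbf{1}=\mathbf{1}\right\}\cong GL(n-1,\mathbb{R})$ lying inside $V=\left(\mathcal{U}\cap GL(n,\mathbb{R})\right)\cdot\left(\mathcal{U}\cap GL(n,\mathbb{R})\right)^{-1}$, and then use that $\bigcup_{j}V^{j}$ is an open, hence clopen, subgroup --- is the standard one and is complete, including the genuinely necessary preliminary check that $\mathcal{U}$ actually contains an invertible matrix in the relative interior.
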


\subsection{}
Denote by $U(n)$  a group of unitary $n\times n$-matrices. We will consider $U(n)$ and $\mathcal{DS}_n$ as a real manifolds of the dimension $n^2$ and $(n-1)^2$ respectively. Let $f: U(n)\mapsto \mathcal{DS}_n$ be a smooth map and let ${\rm d}f_u$ be a differential of $f$ in the point $u$. Mapping    ${\rm d}f_u$ is the linear operator from the tangent space $T_uU(n)$ at $u$ to the tangent space  $T_{f(u)}\mathcal{DS}_n$.
Function $f$ is a {\it submersion} at a point $u\in U(n)$ if  ${\rm d}f_u \,T_uU(n)=T_{f(u)}\mathcal{DS}_n$.
In connection with formula (\ref{Operator_unistochastic}) we will find the unitary matrices $u$ such that  the map
\begin{eqnarray}\label{mu_def}
U(n)\ni u=\left[ u_{ij} \right]\stackrel{\mu}{\mapsto}\left[\left|u_{ij}\right|^2\right]\in\mathcal{DS}_n \text{ is submersion at the point } u.
\end{eqnarray}
Hence will follow that there exists the open neighborhood $\mathcal{U}$ of the point $u$ such that $\mu(\mathcal{U})\subset\mathcal{DS}_n$ is open subset.

We adopt below the results of A. Karabegov \cite{Karabegov} to make them applicable to proving Proposition \ref{commutants_mutually}.

Denote by $\mathcal{SH}_n$ the set of all skew-hermitian $n\times n$-matrices. It is clear, that the dimension of $U(n)$, as a real manifold, is equal $n^2$. Considering  the smooth one parameter family $U(t)=\left[  U_{kl}(t) \right]\subset U(n)$ and using the equality $U(t)^*\cdot U(t)=\mathrm{I}_n$, we obtain
\begin{eqnarray*}
U(0)^*\cdot U'(0)+U'(0)^*\cdot U(0)=0,\text{\ where\ } U'(0)=\left[U_{kl}'(0)\right].
\end{eqnarray*}
Hence
\begin{eqnarray}\label{skew_hermitian}
 U'(0)\cdot U(0)^*+U(0)\cdot U'(0)^*=0.
\end{eqnarray}

This implies that $U'(0)\in T_uU(n)$ is identified with the skew Hermitian matrix $X=u^*\cdot U'(0)\in T_{\mathrm{I}_n}U(n)$ treated as an element of the Lie algebra $\mathcal{SH}_n$ of $U(n)$. Here $u=\left[u_{kl}\right]=U(0)$.

 Applying (\ref{mu_def}), we see that ${\rm d}\mu_u:T_uU(n)\mapsto T_{\mu(u)}\mathcal{DS}_n$ acts  as follows
\begin{eqnarray*}
{\rm d}\mu_u \;\left( U'(0) \right)=\left[ u_{kl}\overline{U_{kl}^\prime(0)}+ U_{kl}^\prime(0)\overline{u_{kl}}\right]\in T_{\mu(u)}\mathcal{DS}_n.
\end{eqnarray*}
Let us introduce the operator $\,^u\!{\rm d}\mu_u: T_{{\rm I}_n}U(n)\mapsto T_{\mu(u)}\mathcal{DS}_n$ which acts by
\begin{eqnarray}
\,^u\!{\rm d}\mu_u(A)={\rm d}\mu_u(uA),\;A\in T_{{\rm I}_n}U(n),\;uA\in T_uU(n).
\end{eqnarray}
Therefore,
\begin{eqnarray*}
\,^u\!{\rm d}\mu_u\left( u^*U'(0) \right)=\left[ u_{kl}\overline{U_{kl}^\prime(0)}+ U_{kl}^\prime(0)\overline{u_{kl}}\right]\in T_{\mu(u)}\mathcal{DS}_n.
\end{eqnarray*}
Hence, assuming that all entries of $u=U(0)=\left[ u_{kl} \right]$ are nonzero, we obtain
\begin{eqnarray}\label{formula_for_dif}
\,^u\!{\rm d}\mu_u\left( u^*U'(0) \right)=\left[ \left(\frac{U_{kl}^\prime(0)}{u_{kl}}+\frac{\overline{U_{kl}^\prime(0)}}{\overline{u_{kl}}}  \right)\left| u_{kl}\right|^2 \right].
\end{eqnarray}
Now we can to rewrite the equality (\ref{skew_hermitian}) as follows
\begin{eqnarray}\label{coordinate_skew_hermitian}
\sum\limits_{j=1}^n u_{kj}\frac{U_{kj}^\prime(0)}{u_{kj}}\overline{u_{lj}}+\sum\limits_{j=1}^n u_{kj}\frac{\overline{U_{lj}^\prime(0)}}{\overline{u_{lj}}}\overline{u_{lj}}=0.
\end{eqnarray}
Consider the family $\,^\theta\!U=\left[ \,^\theta\!U_{kl} \right]$ of the unitary matrices, where
\begin{eqnarray}\label{Theta_U}
\,^\theta\!U_{kl}=\delta_{kl}+\frac{\theta-1}{n}, \theta\in \mathbb{T}=\left\{ z\in\mathbb{C}:|z|=1\right\}.
\end{eqnarray}

 On the space $\mathbb{M}_n$ of all complex $n\times n$-matrices define two inner ptoducts
\begin{eqnarray*}
&\left<A,B \right>_\theta =\sum\limits_{k,l-1}^n  A_{kl}  \overline{B_{kl}}\left| \,^\theta\!U_{kl}\right|^2,\; A=\left[ A_{kl} \right], B=\left[ B_{kl} \right],\\
&\left<A,B \right>_{\rm Tr}={\rm Tr}\left( AB^* \right), \text{ where }\;{\rm Tr} \;\text{ is an ordinary trace on } \mathbb{M}_n.
\end{eqnarray*}
Denote by $\mathbb{M}_n^\theta$ and $\mathbb{M}_n^{\rm Tr}$ the corresponding  Hilbert spaces.

Now we introduce two operators $\mathbf{C}_\theta$ and $\mathbf{D}_\theta$ as follows
\begin{eqnarray*}
\mathbb{M}_n^\theta\ni f=\left[ f_{kl} \right]\stackrel{\mathbf{C}_\theta}{\mapsto} Y=\left[ Y_{kl} \right]\in \mathbb{M}_n^{\rm Tr}, \text{ where } Y_{kl}=\sum\limits_{j=1}^n \,^\theta\!U_{kj}f_{kj}\overline{\,^\theta\!U_{lj}};\\
\mathbb{M}_n^\theta\ni g=\left[ g_{kl} \right]\stackrel{\mathbf{D}_\theta}{\mapsto} Z=\left[ Z_{kl} \right]\in \mathbb{M}_n^{\rm Tr}, \text{ where } Z_{kl}=\sum\limits_{j=1}^n \,^\theta\!U_{kj}g_{lj}\overline{\,^\theta\!U_{lj}}.
\end{eqnarray*}
Hence, using the orthogonality relations between $\,^\theta\!U_{kj}$, can obtain the formulas for the inverse operators
\begin{eqnarray}\label{inverse_operators}
\left( \mathbf{C}_\theta^{-1}Y \right)_{kq}=\,^\theta\!U_{kq}^{-1}\sum\limits_{j=1}^n Y_{kj}\,^\theta\!U_{jq}\text{ and }
\left( \mathbf{D}_\theta^{-1}Y \right)_{kq}=\,\overline{^\theta\!U}_{kq}^{-1}\sum\limits_{j=1}^nY_{jk} \,\overline{^\theta\!U}_{jq}.
\end{eqnarray}
Set $u=U(0)=\,^\theta\!U$, $X=u^*U'(0)$, $ f_{kj}=\frac{U_{kj}^\prime(0)}{u_{kj}}$ and  $\overline{f}=\left[\,\overline{f_{kj}}\,\right]$.
 Then
 \begin{eqnarray}
 uXu^*= U'(0)\cdot u^* = \mathbf{C}_\theta f \text{ and } uX^*u^*=u\cdot  U'(0)^*=\mathbf{D}_\theta\overline{f}.
 \end{eqnarray}
Hence, applying (\ref{coordinate_skew_hermitian}), we have
 \begin{eqnarray}\label{properties_S_D}
 \mathbf{C}_\theta f=uXu^*, \mathbf{D}_\theta \overline{f}=-uXu^*.
 \end{eqnarray}
It easy to check that the next statement  holds.
\begin{Prop}[Proposition 2.1 \cite{Karabegov}]
If $\theta\notin\{-1,1\}$ then the mappings $\mathbf{C}_\theta$ and $\mathbf{D}_\theta$ are unitary isomorphisms between the Hilbert spaces
$\mathbb{M}_n^\theta$ and $\mathbb{M}_n^{\rm Tr}$.
\end{Prop}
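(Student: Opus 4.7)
My plan is to exploit the fact that the matrix $\,^\theta\!U$ is itself unitary. First I would verify this by a direct computation: since $\,^\theta\!U_{kl}=\delta_{kl}+(\theta-1)/n$, one expands
\[
\sum_{l=1}^n \,^\theta\!U_{kl}\,\overline{\,^\theta\!U_{k'l}} = \delta_{kk'} + \frac{(\theta-1)+(\overline{\theta}-1)+|\theta-1|^2}{n} = \delta_{kk'} + \frac{|\theta|^2-1}{n} = \delta_{kk'},
\]
using $|\theta|=1$. Hence the rows of $\,^\theta\!U$ are orthonormal in $\mathbb{C}^n$, and an identical computation gives the same for the columns. The exclusion $\theta\notin\{-1,1\}$ (together with the relevant range of $n$) guarantees that every entry $\,^\theta\!U_{kl}$ is nonzero: the off-diagonal entries $(\theta-1)/n$ vanish only at $\theta=1$, while the diagonal entries $1+(\theta-1)/n$ could vanish only at $\theta=1-n$, which on $|\theta|=1$ forces $n\in\{0,2\}$. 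Consequently the weights $|\,^\theta\!U_{kl}|^2$ are strictly positive, $\langle\cdot,\cdot\rangle_\theta$ is a bona fide positive-definite inner product, and $\mathbb{M}_n^\theta$ is a Hilbert space of complex dimension $n^2$, matching that of $\mathbb{M}_n^{\rm Tr}$.

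The heart of the proof is then a direct isometry calculation. For $f,g\in\mathbb{M}_n^\theta$ I would expand
\[
\langle\mathbf{C}_\theta f,\mathbf{C}_\theta g\rangle_{\rm Tr}=\sum_{k,l,j,j'}\,^\theta\!U_{kj}\,f_{kj}\,\overline{\,^\theta\!U_{lj}}\;\overline{\,^\theta\!U_{kj'}}\,\overline{g_{kj'}}\,\,^\theta\!U_{lj'},
\]
sum over $l$ first, and apply the column-orthogonality relation $\sum_l \overline{\,^\theta\!U_{lj}}\,\,^\theta\!U_{lj'}=\delta_{jj'}$ to collapse the expression to $\sum_{k,j}|\,^\theta\!U_{kj}|^2 f_{kj}\,\overline{g_{kj}}=\langle f,g\rangle_\theta$. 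Thus $\mathbf{C}_\theta$ is isometric. The analogous calculation for $\mathbf{D}_\theta$ produces a quadruple sum whose entries of $f$ and $g$ carry indices $(l,j)$ and $(l,j')$ rather than $(k,j)$ and $(k,j')$; one now sums over $k$ first and invokes the row-orthogonality $\sum_k \,^\theta\!U_{kj}\,\overline{\,^\theta\!U_{kj'}}=\delta_{jj'}$ to again obtain $\langle f,g\rangle_\theta$. Since both Hilbert spaces are finite-dimensional of the same dimension $n^2$, an isometric linear map is automatically a unitary isomorphism; the explicit formulas (\ref{inverse_operators}) furnish an independent check of surjectivity.

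I do not foresee any serious obstacle. The argument reduces to Parseval's identity applied to the orthonormal rows and columns of the unitary matrix $\,^\theta\!U$; the one genuine subtlety is the non-degeneracy of the weighted inner product $\langle\cdot,\cdot\rangle_\theta$, which is precisely what forces the assumption $\theta\notin\{-1,1\}$ and explains why the statement is framed this way.
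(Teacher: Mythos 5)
Your proof is correct. The paper does not actually prove this proposition — it is quoted from Karabegov and dismissed with ``it is easy to check'' — and your computation supplies exactly the intended verification: the isometry of $\mathbf{C}_\theta$ and $\mathbf{D}_\theta$ collapses, via the orthonormality of the rows and columns of the unitary matrix $\,^\theta\!U$, to Parseval-type identities, while the observation that all entries $\,^\theta\!U_{kl}$ are nonzero precisely when $\theta\notin\{-1,1\}$ correctly identifies why the hypothesis is needed (positive-definiteness of $\langle\cdot,\cdot\rangle_\theta$, and invertibility via the explicit inverse formulas).
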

Furthermore, using (\ref{formula_for_dif})
and (\ref{properties_S_D}), we obtain for $X=u^*U'(0)$ and $u=\,^\theta\!U$
\begin{eqnarray}\label{44}
\left(\,^u\!{\rm d}\mu_u \;X\right)_{kl}=\left(\mathbf{C}_\theta^{-1}(uXu^*)-\mathbf{D}_\theta^{-1}(uXu^*)\right)_{kl}\cdot |u_{kl}|^2.
\end{eqnarray}
 Now we will prove the next statement.
 \begin{Th}[Theorem 5.1 \cite{Karabegov}]\label{Karabegov_TH}
 Let $u=\,^\theta\!U$, where $\theta\notin\{-1,1\}$. Then the dimension of the kernel of the operator $\left(\mathbf{C}_\theta^{-1}-\mathbf{D}_\theta^{-1}\right)$ is equal to $2n-1$.
 \end{Th}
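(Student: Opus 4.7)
The plan is to reduce computing $\dim\ker(\mathbf{C}_\theta^{-1}-\mathbf{D}_\theta^{-1})$ to the determination of the dimension of $\mathcal{K}=\{f\in\mathbb{M}_n:\mathbf{C}_\theta f=\mathbf{D}_\theta f\}$. Since the preceding Proposition (Proposition 2.1 of \cite{Karabegov}) asserts that $\mathbf{C}_\theta,\mathbf{D}_\theta:\mathbb{M}_n^\theta\to\mathbb{M}_n^{\rm Tr}$ are unitary isomorphisms, the substitution $Y=\mathbf{C}_\theta f$ provides a bijection between $\ker(\mathbf{C}_\theta^{-1}-\mathbf{D}_\theta^{-1})$ and $\mathcal{K}$. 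Exploiting the compact decomposition $\,^\theta\!U=I+\beta J$ (where $\beta=(\theta-1)/n$ and $J$ is the $n\times n$ all-ones matrix) together with the Hadamard-product identity $\,^\theta\!U\odot f=D_f+\beta f$ (with $D_f$ the diagonal part of $f$), one obtains
\[
\mathbf{C}_\theta f=(D_f+\beta f)(I+\bar\beta J),\qquad \mathbf{D}_\theta f=(I+\beta J)(D_f+\bar\beta f^T).
\]

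Expanding entrywise, and writing $r_k=\sum_j f_{kj}$ for the row sums of $f$, the equation $\mathbf{C}_\theta f=\mathbf{D}_\theta f$ at position $(k,l)$ (after cancellation of the $D_f$ contributions common to both sides) reduces to the family
\[
E_{kl}:\quad \beta f_{kl}+|\beta|^2 r_k+\bar\beta f_{kk}=\bar\beta f_{lk}+|\beta|^2 r_l+\beta f_{ll},\qquad 1\le k,l\le n.
\]
Next I would consider the sum $E_{kl}+E_{lk}$ and the difference $E_{kl}-E_{lk}$. The hypothesis $\theta\notin\{-1,1\}$ yields $\beta\neq\bar\beta$ (since $\theta\notin\mathbb{R}$) and $\beta+\bar\beta\neq 0$ (since $\theta\neq 1$); a short computation using $|\theta|=1$ gives $2|\beta|^2/(\beta+\bar\beta)=-2/n$. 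Dividing out the nonzero prefactors produces the two decoupled relations
\[
\text{(A)}\ \ f_{kl}+f_{lk}=f_{kk}+f_{ll},\qquad \text{(B)}\ \ f_{kl}-f_{lk}=f_{ll}-f_{kk}+\tfrac{2}{n}(r_k-r_l),
\]
whose sum gives the master formula $f_{kl}=f_{ll}+(r_k-r_l)/n$.

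Substituting this formula back into $r_k=\sum_l f_{kl}$ yields $r_k=T+r_k-S/n$, where $T=\sum_k f_{kk}$ and $S=\sum_k r_k$, which forces the single linear relation $S=nT$. Conversely, a direct substitution shows that the formula $f_{kl}=f_{ll}+(r_k-r_l)/n$ turns the difference between the two sides of $E_{kl}$ into $\bigl((\beta+\bar\beta)/n+|\beta|^2\bigr)(r_k-r_l)$, whose prefactor vanishes identically when $|\theta|=1$. Thus $\mathcal{K}$ is parameterized by the tuple $(f_{11},\ldots,f_{nn};r_1,\ldots,r_n)\in\mathbb{C}^{2n}$ subject to the single linear relation $S=nT$, and we conclude $\dim\mathcal{K}=n+n-1=2n-1$.

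The main technical obstacle is the sufficiency direction: the relations (A) and (B) are only \emph{necessary} consequences of the family $\{E_{kl}\}$, and to confirm that the derived parameterization exhausts $\mathcal{K}$ one needs the numerical identity $(\beta+\bar\beta)/n+|\beta|^2=0$, a coincidence that depends crucially on $|\theta|=1$. As a useful sanity check on the final count, note that $\mu$ is invariant under the double-diagonal action $u\mapsto d_1 u d_2$ of $U(1)^n\times U(1)^n$, whose effective quotient by the scalar diagonal has dimension $2n-1$; this already supplies the lower bound $\dim\ker\geq 2n-1$, and the algebraic analysis above delivers the matching upper bound.
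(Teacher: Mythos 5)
Your argument is correct, and it takes a genuinely different route from the paper's. The paper proceeds by exhibiting five pairwise orthogonal real subspaces $\mathrm{K}_n$, $\mathrm{B}_n$, $\mathrm{O}_n$, $\mathrm{IS}_n$, $\mathrm{RS}_n$ of $\mathcal{SH}_n$ on each of which $\mathbf{C}_\theta^{-1}$ and $\mathbf{D}_\theta^{-1}$ agree up to an explicit scalar ratio, verifies that the dimensions sum to $n^2$ and that the ratio equals $1$ only on $\mathrm{K}_n=\operatorname{span}_{\mathbb{R}}\left(\mathfrak{D}_n\cup u\mathfrak{D}_nu^*\right)$ when $\theta\notin\{-1,1\}$, and concludes that the kernel is $\mathrm{K}_n$, of real dimension $2n-1$. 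You instead solve the linear system $\mathbf{C}_\theta f=\mathbf{D}_\theta f$ outright using the rank-one perturbation structure $\,^\theta\!U=I+\beta J$: I verified your entrywise equations $E_{kl}$, the symmetric/antisymmetric splitting (which uses exactly $\beta\neq\bar\beta$ and $\beta+\bar\beta\neq 0$, i.e.\ $\theta\notin\{-1,1\}$), the master formula $f_{kl}=f_{ll}+(r_k-r_l)/n$ with the constraint $S=nT$, and the vanishing of $(\beta+\bar\beta)/n+|\beta|^2$ on the unit circle, which settles the sufficiency direction; the count $2n-1$ then follows from your parameterization. Your approach buys an explicit description of the kernel and avoids having to guess the five invariant subspaces. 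The one point you must make explicit is the real-versus-complex bookkeeping: you compute the complex dimension of the kernel on all of $\mathbb{M}_n$, whereas what the paper needs (for Corollary \ref{Collary_of_Kar}, via (\ref{44})) is the real dimension of the kernel intersected with $\mathcal{SH}_n\cong T_uU(n)$. The two numbers coincide, and your text already contains both halves of the bridge --- the $U(1)^n\times U(1)^n$ orbit gives the real lower bound $2n-1$, while real linear independence in a real form implies complex linear independence, so your complex count $2n-1$ is also an upper bound for the real kernel --- but as written this identification is only gestured at. Alternatively, observe that your defining equations $f_{kl}=f_{ll}+(r_k-r_l)/n$ have real coefficients, so the kernel is stable under the relevant conjugation and its real form automatically has real dimension equal to your complex dimension.
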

 Since the real dimensions of $T_uU(n)$ and $T_{\mu(u)}\mathcal{DS}_n$ are equal $n^2$ and $(n-1)^2$, applying (\ref{44}), we obtain the next
 \begin{Co}\label{Collary_of_Kar}
 If $\theta\notin\{-1,1\}$ then the spaces ${\rm d}\mu_u \left( T_uU(n)\right)$  and $T_{\mu(u)}\mathcal{DS}_n$ coincide.
  \end{Co}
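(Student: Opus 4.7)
The plan is to recast the problem and then count dimensions via an explicit decomposition. Since $\mathbf{C}_\theta$ and $\mathbf{D}_\theta$ are invertible, the assignment $f\mapsto\mathbf{C}_\theta f$ gives a linear bijection $\ker(\mathbf{C}_\theta-\mathbf{D}_\theta)\to\ker(\mathbf{C}_\theta^{-1}-\mathbf{D}_\theta^{-1})$; indeed, $\mathbf{C}_\theta^{-1}Y=\mathbf{D}_\theta^{-1}Y$ iff $Y=\mathbf{C}_\theta f=\mathbf{D}_\theta f$ for $f=\mathbf{C}_\theta^{-1}Y$. So it suffices to compute $\dim_{\mathbb{C}}\ker(\mathbf{C}_\theta-\mathbf{D}_\theta)$. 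Throughout I write $u_{kl}=\,^\theta\!U_{kl}=\delta_{kl}+\alpha$ with $\alpha=(\theta-1)/n$; unitarity of $\,^\theta\!U$ amounts to the identity $\alpha+\bar\alpha+n|\alpha|^2=0$.

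First I produce an explicit $(2n-1)$-dimensional subspace $V\subseteq\ker(\mathbf{C}_\theta-\mathbf{D}_\theta)$. Using $\sum_j u_{kj}\bar u_{lj}=\delta_{kl}$: a \emph{row-constant} matrix $f_{kj}=a_k$ gives $(\mathbf{C}_\theta f)_{kl}=a_k\delta_{kl}$ and $(\mathbf{D}_\theta f)_{kl}=a_l\delta_{kl}$, which coincide; a \emph{column-constant} $f_{kj}=b_j$ gives literally identical expressions for $\mathbf{C}_\theta f$ and $\mathbf{D}_\theta f$. The row-constants and column-constants form $n$-dimensional subspaces meeting in the scalars, so their sum $V$ has complex dimension $n+n-1=2n-1$.

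For the opposite inclusion, decompose $\mathbb{M}_n=V\oplus\mathcal{D}$, where $\mathcal{D}$ is the space of doubly centred matrices (zero row sums and zero column sums, of complex dimension $(n-1)^2$). Since $V\subseteq\ker$, it is enough to show $\mathcal{D}\cap\ker(\mathbf{C}_\theta-\mathbf{D}_\theta)=\{0\}$. Direct expansion
$u_{kj}\bar u_{lj}=\delta_{kj}\delta_{lj}+\bar\alpha\,\delta_{kj}+\alpha\,\delta_{lj}+|\alpha|^2$
substituted into $((\mathbf{C}_\theta-\mathbf{D}_\theta)g)_{kl}=\sum_j u_{kj}(g_{kj}-g_{lj})\bar u_{lj}$ yields, for $k\neq l$ and $g\in\mathcal{D}$ (the $|\alpha|^2$-term vanishing by the zero-row-sum condition),
\[
\alpha\,g_{kl}-\bar\alpha\,g_{lk}=\alpha\,g_{ll}-\bar\alpha\,g_{kk}.
\]
Coupling this with its $k\leftrightarrow l$ swap gives a $2\times 2$ linear system for $(g_{kl},g_{lk})$ whose determinant is $\alpha^2-\bar\alpha^2$. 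The hypothesis $\theta\notin\{-1,1\}$ is exactly the condition that $\alpha$ is neither real nor purely imaginary, so this determinant is nonzero; solving yields $g_{kl}=g_{ll}$ for every $k\neq l$. The column-sum condition $\sum_k g_{kl}=0$ then forces $n\,g_{ll}=0$, hence $g\equiv 0$.

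The main obstacle is the bookkeeping of the expansion together with making the right choice of complementary decomposition: once $\mathcal{D}$ is isolated, the remaining equations collapse to a single manageable $2\times 2$ system per unordered pair $\{k,l\}$. The role of the hypothesis $\theta\neq\pm 1$ is transparent and essential — it is precisely the non-degeneracy of that $2\times 2$ system; if $\theta=\pm 1$ the determinant vanishes and the kernel would strictly exceed $2n-1$.
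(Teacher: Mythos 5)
Your computation of $\ker(\mathbf{C}_\theta-\mathbf{D}_\theta)$ is correct and takes a genuinely different, more elementary route than the paper. The paper works with $\mathbf{C}_\theta^{-1}-\mathbf{D}_\theta^{-1}$ restricted to the real space $\mathcal{SH}_n$ and splits it into five explicit pairwise orthogonal subspaces $\mathrm{K}_n,\mathrm{B}_n,\mathrm{O}_n,\mathrm{IS}_n,\mathrm{RS}_n$ on each of which the two operators are proportional, then sums dimensions. You instead identify the kernel on all of $\mathbb{M}_n$ as the row-constant plus column-constant matrices and annihilate the complementary space of doubly centred matrices through a nondegenerate $2\times 2$ system per off-diagonal pair; I checked the expansion, the determinant $\alpha^2-\bar\alpha^2$, and the conclusion $g_{kl}=g_{ll}$, and they are all right. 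This is cleaner and makes the role of $\theta\neq\pm1$ more transparent than the paper's case-by-case proportionality constants.

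There is, however, a gap in the reduction announced in your first sentence. The Corollary concerns the \emph{real} rank of ${\rm d}\mu_u$ on $T_uU(n)\cong\mathcal{SH}_n$ (real dimension $n^2$), whereas you compute the \emph{complex} kernel of $\mathbf{C}_\theta-\mathbf{D}_\theta$ on $\mathbb{M}_n$. By (\ref{44}) the relevant kernel is $\left\{X\in\mathcal{SH}_n:\,uXu^*\in\ker\left(\mathbf{C}_\theta^{-1}-\mathbf{D}_\theta^{-1}\right)\right\}$, i.e.\ the intersection of a complex subspace $W$ with $\dim_{\mathbb{C}}W=2n-1$ against the totally real subspace $\mathcal{SH}_n$; the equality $\dim_{\mathbb{R}}(W\cap\mathcal{SH}_n)=\dim_{\mathbb{C}}W$ is not automatic and is exactly what your "it suffices" elides. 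Two ways to close it: (i) observe that $(\mathbf{C}_\theta f)^*=\mathbf{D}_\theta\bar f$ and that your $V$ is stable under entrywise conjugation, so $W=\mathbf{C}_\theta(V)$ is stable under $Y\mapsto -Y^*$ and hence $\dim_{\mathbb{R}}(W\cap\mathcal{SH}_n)=2n-1$ exactly; or (ii) use only the inequality $\dim_{\mathbb{R}}(W\cap\mathcal{SH}_n)\le\dim_{\mathbb{C}}W=2n-1$, which gives real rank at least $n^2-(2n-1)=(n-1)^2$, and combine with the a priori containment ${\rm d}\mu_u(T_uU(n))\subseteq T_{\mu(u)}\mathcal{DS}_n$ whose real dimension is $(n-1)^2$ (all entries of $\mu({}^\theta U)$ being positive). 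You also need the harmless remark that the entrywise factor $|u_{kl}|^2$ in (\ref{44}) is nonzero for every $k,l$, so it does not change the kernel. With either patch the argument is complete.
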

 \begin{proof}[\it Proof of Theorem \ref{Karabegov_TH}. ]
 Let $\mathfrak{D}_n$ be the set of  all diagonal matrices  in $\mathcal{SH}_n$ and let $\mathrm{K}_n$ be a real subspace of $\mathcal{SH}_n$, generated by $\mathfrak{D}_n$ and $u\mathfrak{D}_nu^*$. The ordinary calculations shows that
 \begin{eqnarray}\label{dim_kernel}
 \mathbf{C}_\theta^{-1}\eta =\mathbf{D}_\theta^{-1}\eta \text{ for all } \eta \in \mathrm{K}_n \text{ and }\; {\rm dim}\,\mathrm{K}_n=2n-1.
 \end{eqnarray}
 Define the entries of the matrix $\,^k_l\!B=\left[ \,^k_l\!B_{pq} \right]$ as follows
 \begin{eqnarray}
 \,^k_l\!B_{pq}=\left\{
\begin{array}{rl}
0, &\text{ if }  p=q \text{ or } (p\notin\{k,l\})\bigwedge (q\notin\{k,l\});\\
-1 &\text{ if } p=k, q=l;\\
1, &\text{ if }  p=l, q=k;\\
\frac{n+\overline{\theta}-1}{(\overline{\theta}-1)(n-2)}, &\text{ if }  q=l, p\neq k\text{ and } p\neq l;\\
\frac{n+\theta-1}{(\theta-1)(n-2)}, &\text{ if }  p=k, q\neq l\text{ and } q\neq k;\\
-\frac{n+\overline{\theta}-1}{(\overline{\theta}-1)(n-2)}, &\text{ if }  q=k, p\neq k\text{ and } p\neq l;\\
-\frac{n+\theta-1}{(\theta-1)(n-2)}, &\text{ if }  p=l, q\neq l\text{ and } q\neq k.
\end{array}
\right.
 \end{eqnarray}

 Let $\mathrm{B}_n$ be a real subspace of  $\mathcal{SH}_n$, generated by the matrices $\,^k_l\!B$, where $k,l=1,2,\ldots, n$.
 By the calculations can be can be checked that the subspaces $\mathrm{K}_n$ and $\mathrm{B}_n$  mutually orthogonal and
 \begin{eqnarray}\label{B}
 \mathbf{C}_\theta^{-1}\eta=-\frac{n+\overline{\theta}-1}{n+\theta-1}\mathbf{D}_\theta^{-1}\eta \text{ for all } \eta\in\mathrm{B}_n.
 \end{eqnarray}
 It easy to check that the matrices $\,^1_2\!B$, $\,^2_3\!B$, $\ldots$, $\,_{\;\;\;\;\;\;\,n}^{(n-1)}\!B$ are linearly independent. Therefore,
 \begin{eqnarray}\label{dimension_inequlity}
 {\rm dim}\, \mathrm{B}_n\geq n-1.
 \end{eqnarray}

 Let $\mathrm{O}_n$ be one dimensional subspace $\mathbb{R}\,iO\subset\mathcal{SH}_n$, where $O=\left[  O_{kl} \right]=\left[ \delta_{kl}-1 \right]$. By calculations we see that $\mathrm{K}_n$ and $\mathrm{B}_n$ are orthogonal to $\mathrm{O}_n$ and
 \begin{eqnarray}\label{O}
  \mathbf{C}_\theta^{-1}\,O=-\theta\frac{n+\overline{\theta}-1}{n+\theta-1}\mathbf{D}_\theta^{-1}\,O.
 \end{eqnarray}
 Denote by $\mathrm{IS}_n$ the real subspace of the matrices $A=\left[ A_{kl} \right]\in\mathcal{SH}_n$ with the purely imaginary entries such that
 \begin{eqnarray}\label{sum_equal_zero}
 A_{kk}=0 \text{ and } \sum\limits_{l=1}^n A_{kl}=0 \text{ for all } k=1,2,\ldots, n.
 \end{eqnarray}
 Hence, using (\ref{inverse_operators}), we obtain
 \begin{eqnarray}\label{IS}
 \mathbf{C}_\theta^{-1}\,A=-\overline{\theta}\,\mathbf{D}_\theta^{-1}\,A \text{ for all }A\in\mathrm{IS}_n.
 \end{eqnarray}
 At last we introduce the real subspace $\mathrm{RS}_n$ of the matrices $A=\left[ A_{kl} \right]\in\mathcal{SH}_n$ with the real entries which satisfy  (\ref{sum_equal_zero}). It follows, by the similar calculations, that
 \begin{eqnarray}\label{RS}
  \mathbf{C}_\theta^{-1}\,A=\overline{\theta}\,\mathbf{D}_\theta^{-1}\,A \text{ for all }A\in\mathrm{RS}_n.
 \end{eqnarray}
 Applying (\ref{sum_equal_zero}), we obtain
 \begin{eqnarray}\label{equality_dim_IS}
 {\rm dim}\,\mathrm{IS}_n=\left( \sum\limits_{j=1}^{n-1}(n-j) \right)-n=\frac{n(n-3)}{2}.
 \end{eqnarray}
 Analogously,
 \begin{eqnarray}\label{equality_dim_RS}
 {\rm dim}\,\mathrm{RS}_n=\left( \sum\limits_{j=1}^{n-1}(n-j) \right)-(n-1)=\frac{(n-1)(n-2)}{2}.
 \end{eqnarray}
 By the ordinary calculations  can to show that subspaces $\mathrm{K}_n$, $\mathrm{B}_n$, $\mathrm{O}_n$, $\mathrm{IS}_n$, $\mathrm{RS}_n$ are pairwise orthogonal. Hence, applying (\ref{dim_kernel}), (\ref{dimension_inequlity}), (\ref{equality_dim_IS}) and (\ref{equality_dim_RS}), we have
$$ {\rm dim}\left( \mathrm{K}_n\oplus\mathrm{B}_n \oplus\mathrm{O}_n\oplus\mathrm{B}_n\oplus\mathrm{IS}_n\oplus\mathrm{IR}_n\right)\geq n^2.$$
Therefore, $\mathrm{K}_n\oplus\mathrm{B}_n \oplus\mathrm{O}_n\oplus\mathrm{B}_n\oplus\mathrm{IS}_n\oplus\mathrm{IR}_n=\mathcal{SH}_n$.
Thus any $\Psi\in\mathcal{SH}_n$ can to write as follows $\Psi=\Psi_K+\Psi_B+\Psi_O+\Psi_{IS}+\Psi_{RS}$, where $\Psi_{\star}$ lies in the corresponding orthogonal component. If $\Psi$ lies in kernel of the operator ${\rm d}\mu_u=\left(\mathbf{C}_\theta^{-1}-\mathbf{D}_\theta^{-1}\right)$  then,  using (\ref{dim_kernel}), (\ref{B}), (\ref{O}), (\ref{IS}) and (\ref{RS}), we obtain
\begin{eqnarray*}
&D_\theta\circ{\rm d}\mu_u\,\Psi=\left( -\frac{n+\overline{\theta}-1}{n+\theta-1}-1+ \right)\Psi_B+\left( -\theta\frac{n+\overline{\theta}-1}{n+\theta-1}-1+ \right)\Psi_O\\
&-(\theta+1)\Psi_{IS}+(\overline{\theta}-1)\Psi_{RS}.
\end{eqnarray*}
Since $\theta\notin\{-1,1\}$, then $\Psi_B=\Psi_O=\Psi_{IS}=\Psi_{RS}=0$. Therefore, $\Psi=\Psi_K\in\mathrm{K}_n$.
 \end{proof}
 The next statement follows from Corollary  \ref{Collary_of_Kar}.
 \begin{Co}\label{submersion}
 If  $\theta\notin \{-1,1\}$ then ${\rm d}\mu_u$  is submersion at the point $u=\,^\theta\!U$. Therefore,  there exists an open subset $\mathcal{U}$ such that $u\in\mathcal{U}$ and $\mu(\mathcal{U})$ is  an open subset in $\mathcal{DS}_n$.
 \end{Co}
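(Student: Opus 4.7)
The plan is to derive Corollary \ref{submersion} directly from Corollary \ref{Collary_of_Kar} together with the local normal form for submersions. By definition, ${\rm d}\mu_u$ is a submersion at $u=\,^\theta\!U$ precisely when ${\rm d}\mu_u\bigl(T_uU(n)\bigr)=T_{\mu(u)}\mathcal{DS}_n$, and this surjectivity is exactly the equality furnished by Corollary \ref{Collary_of_Kar} under the hypothesis $\theta\notin\{-1,1\}$. So the submersion claim requires no further argument beyond citing the previous corollary.

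For the openness assertion I would invoke the standard submersion theorem (a direct consequence of the implicit function theorem): if $f\colon X\to Y$ is a smooth map between smooth manifolds and ${\rm d}f_{x_0}$ is surjective, then in suitable local coordinates around $x_0$ and $f(x_0)$ the map $f$ becomes the linear projection $(x_1,\ldots,x_N)\mapsto(x_1,\ldots,x_M)$, which is manifestly an open map. Choosing $\mathcal{U}$ to be the domain of any such adapted chart about $u$ in $U(n)$ then yields $\mu(\mathcal{U})$ open in $\mathcal{DS}_n$.

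The only subtlety is that $\mathcal{DS}_n$ is a closed convex polytope rather than an a priori smooth manifold, so one must verify that $\mu(u)$ lies in its relative interior. By \eqref{matrix_U_unistoh} one has $\mu(\,^\theta\!U)=p+(1-|\theta-1|^2/n)(I-p)$, and a direct computation shows that for $\theta\in\mathbb{T}\setminus\{1\}$ and $n$ sufficiently large every entry of this matrix is strictly positive. Hence by the first Lemma of Section \ref{U_to_DS} the point $\mu(u)$ is interior to the Birkhoff polytope, where $\mathcal{DS}_n$ is a genuine smooth submanifold of $\mathbb{M}_n(\mathbb{R})$ of dimension $(n-1)^2$ with tangent space identified via $O$ with $\,^1_0\mathbb{M}_n(\mathbb{R})$. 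The submersion theorem then applies verbatim in a neighbourhood of $u$.

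In short, no substantive obstacle remains: the hard analytic work — the explicit kernel decomposition of $\mathbf{C}_\theta^{-1}-\mathbf{D}_\theta^{-1}$ and the resulting dimension count — has already been carried out in Theorem \ref{Karabegov_TH} and packaged as Corollary \ref{Collary_of_Kar}, and Corollary \ref{submersion} is simply the geometric reformulation of those facts via the submersion theorem.
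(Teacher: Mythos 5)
Your proposal is correct and follows essentially the same route as the paper, which simply derives Corollary \ref{submersion} from Corollary \ref{Collary_of_Kar} via the standard submersion (implicit function) theorem. Your additional check that $\mu(\,^\theta\!U)$ has strictly positive entries and hence lies in the interior of the Birkhoff polytope, where $\mathcal{DS}_n$ is a genuine $(n-1)^2$-dimensional manifold, is a worthwhile detail the paper leaves implicit.
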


{}
B. Verkin ILTPE of NASU - B.Verkin Institute for Low Temperature Physics and Engineering
of the National Academy of Sciences of Ukraine

n.nessonov@gmail.com

\end{document}